\documentclass[11pt,a4paper]{amsart}

\usepackage{amsmath,amsfonts,amssymb,amsthm}
\usepackage{txfonts}
\usepackage{latexsym,bm,graphicx}
\usepackage{mathrsfs}
\usepackage{color}

\title[Boundary regularity of harmonic maps]{Boundary regularity of harmonic map heat flows from RCD spaces into $CAT(0)$ spaces}
\author{Zi-Ying Liu}
\address{Department of Mathematics\\  Sun Yat-sen University\\ Guangzhou 510275\\ \newline E-mail address: liuzy265@mail2.sysu.edu.cn}
\author{Hui-Chun Zhang}
\address{Department of Mathematics\\  Sun Yat-sen University\\ Guangzhou 510275\\ \newline E-mail address: zhanghc3@mail.sysu.edu.cn}
\author{Xi-Ping Zhu}
\address{Department of Mathematics\\  Sun Yat-sen University\\ Guangzhou 510275\\ \newline E-mail address: stszxp@mail.sysu.edu.cn}

 \newtheorem{theorem}{Theorem}[section]

\newtheorem{proposition}[theorem]{Proposition}
\newtheorem{lemma}[theorem]{Lemma}
\newtheorem{corollary}[theorem]{Corollary}

\theoremstyle{definition}
\theoremstyle{remark}

\newtheorem{defn}[theorem]{Definition}
\newtheorem{remark}[theorem]{Remark}

\numberwithin{equation}{section}

\newcommand{\ls}{\leqslant}
\newcommand{\gs}{\geqslant}

\newcommand{\ip}[2]{\left<{#1},{#2}\right>}

\newcommand{\meas}{\mathfrak{m}}
\newcommand{\dm}{{\rm d}\meas}

\begin{document}

%\today

\begin{abstract}
 In this paper, we study the boundary regularity of harmonic map heat flows from a bounded domain in a metric measure space with generalized Ricci curvature bounded from below into a non-positively curved metric space. % The Zygmund-type estimates in the present paper is new even for solutions of heat equations on Euclidean spaces.
 %\\[3pt]

%\noindent{\bf Key words}: Dirichlet heat kernel, boundary regularity, harmonic maps.\\
%\noindent{\bf MSC 2020}: 58E20, 53C23
\end{abstract}

\maketitle
%\tableofcontents
%\setcounter{tocdepth}{1}

\section{Introduction}

Over the past two to three decades, geometric analysis on nonsmooth metric spaces has drawn much research interest. In the paper, we are concerned with the boundary regularity of harmonic map heat flows from metric measure spaces with generalized Ricci lower bounds into non-positively curved metric spaces in the sense of Alexandrov. Recall in the smooth setting that a harmonic map heat flow between two smooth Riemannian manifolds is a gradient flow of Dirichlet energy. 

To introduce our results, we need some notations. Let $(X,d,\meas)$ be a metric measure space, $\Omega\subset X$ a bounded open subset, and let $(Y,d_Y)$ be another complete metric space. The space $L^2(\Omega,Y)$ denotes all of Borel measurable maps $u:\Omega\to Y$ such that its range is separable and that for some $P\in Y$ (hence for all $P\in Y$)  the function $d_Y(P,u(\cdot))$ is in $L^2(\Omega):=L^2(\Omega,\meas)$. To define a Dirichlet energy of $L^2(\Omega,Y)$ map, Korevaar and Schoen \cite{KS93} introduced an approach via the weak convergence of functionals. Independently, Jost \cite{Jost94,Jost95} provided an methed via the $\Gamma$-convergence. These approximations were later adapted and modified by Kuwae and Shioya \cite{KS03} and   Gigli and   Tyulenev \cite{GT21b} into the following form: for any map $u\in L^2(\Omega,Y)$,  the approximating energy of $u$ of scalar $r>0$ is defined by
$$   e_{2,r}[u](x):= \fint_{B_r(x)} \frac{d_Y^2(u(x), u(y))}{r^2}{\rm d}\meas(y)   $$
whenever $B_r(x)\subset \Omega$, otherwise, $  e_{2,r}[u](x)=0$. Here $\fint_Af{\rm d}\meas:=\frac{1}{\meas(A)}\int_Af{\rm d}\meas$ for any measurable set $A$ and any $f\in L^1(A)$. A map $u\in L^2(\Omega,Y)$ is called to be in $W^{1,2}(\Omega,Y)$ if there exists a function $e_u(x)\in L^1(\Omega)$, called the energy density of $u$, such that 
$$\lim_{r\to0^+} e_{2,r}[u]=e_u\ \ {\rm in}\ \ L^1(\Omega).$$ 
In such a case, the (Dirichlet) energy of $u$ is defined by 
\begin{equation*}\label{equ:Dirichlet-energy-of-map}
E[u]:=\int_\Omega e_u(x)\dm(x).	
\end{equation*}
Remark that some other notions of energy of a map $u\in L^2(\Omega,Y)$ have been well-developed in \cite{Jost97, Jost98,HKST01,Ohta04,Guo21}.

  Mayer \cite{May98} and J. Jost \cite{Jost98} independently extended Crandall-Liggett's theory of gradient flows from convex functionals on Hilbert spaces to convex functionals on non-positively curved metric spaces (denoted $CAT(0)$ spaces).  
Based on this abstract theory, a weak solution of harmonic map heat flows between nonsmooth metric spaces can be defined as a gradient flow of the energy $E[u]$ on $L^2(\Omega,Y)$. Under the assumption that $Y$ is a $CAT(0)$ space, they established favorable long-time existence, uniqueness, and well-established long-time behaviors  (see \cite{May98,Jost98} and Guo \cite{Guo21}).

The natural question is to consider the regularity of such weak solutions of harmonic map heat flows. Let $(X,d,\meas)$ be an $RCD(K,N)$ space, $K\in\mathbb R$ and $N\in(0,+\infty)$, and let $(Y,d_Y)$ be a $CAT(0)$ space. Recently, Han and the authors \cite{ZZ26a, HZZ26} have obtained interior Lipschitz continuity for any harmonic map heat flow from a bounded domain $\Omega\subset X$ into $Y$. In the case where the source space $X$ is a smooth manifold, the same local Lipschitz regularity result has been established by Lin-Seggati-Sire-Wang \cite{LSSW26} and Lin-Wang \cite{LW26} using a different approach.

In the present paper, we focus on the boundary regularity of harmonic map heat flows from an $RCD(K,N)$ domain into $CAT(0)$ spaces. Precisely, letting $\Omega\subset X$ be a bounded domain of an $RCD(K,N)$ space, $K\in \mathbb R$, $N\in(1,+\infty)$, and $(Y,d_Y)$ a $CAT(0)$ space, given
 any $\psi\in W^{1,2}(\Omega,Y)$,  we put
$$W^{1,2}_\psi(\Omega,Y):=\Big\{u\in W^{1,2}(\Omega,Y)|\ d_Y\big(u(x),\psi(x)\big)\in W^{1,2}_0(\Omega),\ \ E[u]\ls E[\psi]\Big\}.$$ 
Let $u(x,t):\Omega\times(0,+\infty)\to Y$ be the harmonic map heat flow with the initial-boundary data $(\psi,u_0)$, i.e., the gradient flow of $u\mapsto E[u]$ starting from some $u(\cdot,0)=u_0$ in $W^{1,2}_\psi(\Omega,Y)$. We consider the qualitative behavior of $u(x,t)$ as $x\to \partial\Omega$.

 We need the following four conditions for the boundary.
 \begin{defn} \label{BRHMHF-def-1.5}
 (1) $\partial\Omega$ is said to satisfy the \emph{Wiener criterion} if for each $x_0\in \partial\Omega$ it holds
 \begin{equation*}\label{equ:Wiener-criterion}
\int_0^1\frac{{\rm cap}(B_r(x_0)\setminus \Omega, B_{2r}(x_0))}{{\rm cap}(B_r(x_0), B_{2r}(x_0))}\frac{{\rm d}r}{r}=+\infty.
\end{equation*} 
 (2)  $\Omega$ is said to satisfy a  {\emph{uniformly exterior density condition with radius $R_{\rm den}\in(0,1)$}} if there exists  $c_0>0$ such that
\begin{equation*} 
\frac{\meas(B_r(x_0)\setminus\Omega)}{\meas(B_r(x_0))}\gs c_0,\quad \forall x_0\in\partial\Omega,\ \ \forall r\in(0,R_{\rm den}).
\end{equation*}

(3)  $\Omega$ is said to satisfy a  {\emph{uniformly exterior ball condition with radius $R_{\rm ext}\in(0,1)$}} if for each $x_0\in \partial\Omega$ there exists a point $y_0\in X\setminus\Omega$ such that
\begin{equation*}\label{BRHMHF-equ-1.2}
d(y_0,x_0)=R_{\rm ext}\quad {\rm and}\quad \overline{B_{R_{\rm ext}}(y_0)}\cap \Omega=\emptyset.
\end{equation*}

(4)   $\partial\Omega$ is said to be  {\emph{perimetrically regular}}  if it is a set of finite perimeter and  there exists a constant  $C_0>0$ such that
\begin{equation*}\label{BRHMHF-equ-1-3}
Per_{\Omega}(B_r(x_0))\ls C_0\frac{\meas(B_r(x_0))}{r},\qquad \forall x_0\in\partial\Omega,\ \ \forall r\in (0,{\rm diam}(\Omega)).
\end{equation*}
See the definition of the capacity ${\rm cap}$ in Section \ref{sect3} and the perimeter $Per_\Omega$ in Section \ref{sect-5}.
\end{defn}

A map $u\in C^\alpha(\overline\Omega,Y)$, $\alpha\in(0,1)$, means 
$$ d_Y(u(x),u(y))\ls Cd^\alpha(x,y),\quad \forall x,y\in\overline\Omega,$$
for some $C>0$. If this inequality holds for $\alpha=1$, we will write $u\in Lip(\overline\Omega,Y)$.

\begin{theorem}\label{thm:main-BRHMHF}
Let $\Omega$ be a bounded domain in an $RCD(K,N)$ space $(X,d,\meas)$, $K\in \mathbb R$, $N\in(1,+\infty)$, and let $(Y,d_Y)$ be a $CAT(0)$ space. Given any $\psi\in W^{1,2}(\Omega,Y)$, let  $u(x,t):\Omega\times(0,+\infty)\to Y$ be a weak solution of the harmonic map heat flow in $W^{1,2}_{\psi}(\Omega,Y)$,  with initial value $u_0$. Suppose that the image of $u_0$ is bounded in $Y$. Then the following statements hold.
 \begin{itemize}
 \item[(1)] If $\partial\Omega$ satisfies the Wiener criterrion, and   $\psi\in C(\overline\Omega,Y)$, then $u(x,t)$ is in $C(\overline\Omega\times(0,+\infty),Y)$.
 \item[(2)] If $\Omega$ has a uniformly exerior density condition, and   $\psi\in C^\alpha(\overline\Omega,Y)$ for some $\alpha\in(0,1)$, then $u(x,t)$ is in $C^\beta(\overline\Omega\times(0,+\infty),Y)$ for some $\beta\in(0,\alpha)$.
\item[(3)] If $\Omega$ has a uniformly exerior ball condition, and   $\psi\in Lip(\overline\Omega,Y)$, then $u(x,t)$ is in $C^{1-\epsilon}(\overline\Omega\times(0,+\infty),Y)$ for any $\epsilon\in(0,1)$.
\item[(4)] If $\Omega$ has a uniformly exerior ball condition with radius $R_{\rm ext}$ and the boundary $\partial\Omega$ is perimetrically regular, and   $\psi\in Lip(\overline\Omega,Y)$ with a Lipschitz constant $L_\psi$.  Let $x_0\in \partial\Omega$ and $R:=\min\{{\rm diam}(\Omega)/100,  R^2_{\rm ext}/(4L_\psi)\}$. Then for any $T\gs R^2_{\rm ext}$ the following Zygmund-type estimate holds
   
  \begin{equation}\label{equ-Zygmund-1}
  	\frac{d_Y(u(x,T+t),\psi(x_0)) }{d(x,\partial\Omega)}\ls c_T \ln\left(\frac{e\cdot {\rm diam}(\Omega)}{d(x,\partial\Omega)}\right)
  \end{equation} 
for any $t>0$ and any $x\in\Omega\cap B_{R/4}(x_0)$ with $d(x,x_0)\ls 2 d(x,\partial\Omega)$, where  the constant $c_T$ depends only on $N,K,\Omega,M, L_\psi$ and $T$.
 \end{itemize}
 \end{theorem}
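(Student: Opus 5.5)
The plan is to reduce every statement to estimates for scalar subcaloric functions. Both ingredients are standard for harmonic map heat flows into $CAT(0)$ targets and are available in the cited works \cite{ZZ26a,HZZ26}.

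First, for any fixed $P\in Y$, the function $f_P(x,t):=d_Y(u(x,t),P)$ is a weak subsolution of the heat equation on $\Omega\times(0,\infty)$, i.e. $(\partial_t-\Delta)f_P\ls0$. This follows from the convexity of $d_Y(\cdot,P)$ on a $CAT(0)$ space together with the variational characterization of the gradient flow. Second, by the contraction property of the Mayer--Jost gradient flow, we get $d_Y(u(x,t),u(x,t+h))$ control. Concretely, $\|d_Y(u(\cdot,t+h),u(\cdot,t))\|_{L^2}$ is nonincreasing in $t$. Combined with the interior Lipschitz regularity and time-Hölder regularity of \cite{ZZ26a,HZZ26}, this reduces the problem to controlling $d_Y(u(x,t),\psi(x_0))$ for $x$ near $x_0\in\partial\Omega$.

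The boundedness of the image of $u_0$, together with the maximum principle for $f_P$, gives a uniform bound $M$ on the image of $u$. Once we know $d_Y(u(x,t),\psi(x_0))\ls\omega(d(x,x_0))$ uniformly in $t\gs t_0$, we interpolate with the interior estimates applied on balls of radius $\sim d(x,\partial\Omega)$. This standard argument yields the global modulus on $\overline\Omega\times(0,\infty)$.

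Next I would set up the barrier comparison at a boundary point $x_0$. Take $P=\psi(x_0)$. Then $f:=f_P$ is subcaloric with $f\ls M'$, and on the lateral boundary $f(x,t)=d_Y(\psi(x),\psi(x_0))\ls\omega_\psi(d(x,x_0))$ in the Sobolev sense, via $d_Y(u,\psi)\in W^{1,2}_0$. The four items then correspond to four choices of caloric barrier.

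(1) Under the Wiener criterion, the Dirichlet heat problem on $\Omega$ is regular at $x_0$. I would invoke the parabolic Wiener test, using the capacity estimates of Section \ref{sect3}. This lets the caloric function with boundary data $\min\{\omega_\psi(d(\cdot,x_0)),M'\}$ be continuous at $x_0$, and comparison gives continuity.

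(2) The uniform exterior density condition implies a uniform capacity density. A De Giorgi/Moser boundary iteration for subsolutions, using the weak Harnack inequality valid on $RCD(K,N)$ spaces, then gives oscillation decay $\mathrm{osc}_{B_{\theta r}}f\ls \gamma\,\mathrm{osc}_{B_r}f+C\omega_\psi(r)$ with $\gamma<1$. Iterating this yields a Hölder exponent $\beta<\alpha$.

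(3) With an exterior ball $B_{R_{\rm ext}}(y_0)$, I would use a barrier $w=A\,\phi(d(x,y_0))$, where $\phi$ is built from the Laplacian comparison $\Delta d_{y_0}\ls C(N,K)/d_{y_0}$. Choosing $\phi(s)=(s-R_{\rm ext})^{1-\epsilon}$ plus a correction, the sign of $\phi''$ dominates the first order term near the sphere and makes $w$ supercaloric. Comparison then gives $f\ls L_\psi d + Cd^{1-\epsilon}$.

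(4) is the borderline case $\epsilon=0$, where the barrier only produces $d\ln(1/d)$. Here I would use the Dirichlet heat kernel representation. I would write $f(\cdot,T+t)$ as bounded by $P^\Omega_s f(\cdot,T+t-s)$ plus a boundary term. The perimetric regularity converts the boundary flux into $\int_0^s\!\int_{\partial\Omega}|\nabla p^\Omega|\,dPer$. Gradient bounds for the Dirichlet heat kernel near an exterior-ball boundary, of the form $|\nabla_y p^\Omega_s(x,y)|\ls C d(x,\partial\Omega)s^{-1}\meas(B_{\sqrt s})^{-1}e^{-d^2/Cs}$, then make the time integral produce the logarithm $\ln(e\,{\rm diam}/d(x,\partial\Omega))$. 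The restriction $T\gs R^2_{\rm ext}$ and the choice of $R$ handle the initial layer and the Lipschitz data.

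I expect this last step to be the main obstacle: obtaining the sharp boundary gradient estimate of the Dirichlet heat kernel in the nonsmooth setting using only exterior balls and perimeter regularity. I would attempt it first by comparing $p^\Omega$ with the barrier of (3) at each scale $\sqrt s$ and summing dyadically.
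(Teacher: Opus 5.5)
The genuine gap is in (4). Your architecture there is the paper's, cf.\ Theorems \ref{BRHMHF-thm-5.3} and \ref{BRHMHF-thm-5.4}: a Duhamel formula, conversion of the boundary flux into a $Per_\Omega$-integral, a boundary gradient bound for $p^\Omega$, and perimetric regularity to sum dyadic scales into a logarithm. But the heat-kernel estimate it hinges on is left open, and the route you suggest for it cannot deliver it.

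Comparing $p^\Omega_s(x,\cdot)$ with the $(1-\epsilon)$-barrier of (3) gives at best $p^\Omega_s(x,y)\lesssim\delta(y)^{1-\epsilon}$. Hence $|\nabla_y p^\Omega_s|\lesssim\delta(y)^{-\epsilon}$, which blows up at $\partial\Omega$. The flux term $\int\!\int|\nabla p^\Omega_t|\,f\,{\rm d}t\,|\nabla\phi_k|\,{\rm d}\meas$ is then not controlled, and by symmetry you would recover only $f\lesssim\delta(x)^{1-\epsilon}$, i.e.\ (3) again.

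What is needed is two-sided \emph{linear} decay, $p^\Omega_t(x,y)\lesssim\frac{\delta(x)\delta(y)}{t\,\meas(B_{\sqrt t}(y))}e^{-d^2(x,y)/(ct)}$, plus the matching Green-function bound. These require a barrier with bounded slope at $\partial\Omega$, such as $R^{-m}-d_{y_0}^{-m}$ with $m$ large (Lemma \ref{lem:conrant-hilbert}). The paper imports these bounds and converts decay into a gradient bound via the local Li--Yau inequality on $B_{\delta(y)/2}(y)$, integrated in $t$ so that $\partial_tp^\Omega$ telescopes (Lemma \ref{BRHMHF-thm-1.10}). The flux-to-perimeter step also needs two further ingredients. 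One is the interior approximation of Lemma \ref{BRHMHF-lem5.4}, which uses finite perimeter and exterior density. The other is the majorant $f\ls Ah$ from Theorem \ref{thm:comparison-heat} near $\partial\Omega$, so that the boundary data appear in the boundary integral. Perimetric regularity is used only in the final summation.

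That said, your passing remark that the barrier ``only produces $d\ln(1/d)$'' already proves (4). The quantity $d\ln(1/d)$ is exactly what (\ref{equ-Zygmund-1}) asserts when $d(x,x_0)\ls 2\delta(x)$, and Hardy--Littlewood shows it is sharp. Here is the argument.
\begin{itemize}
\item[(i)] Let $\rho:=d(\cdot,y_0)-R_{\rm ext}$, so $0<\rho\ls d(\cdot,x_0)$ on $\Omega$. Let $\phi(s):=s\ln(1/s)$, so $\phi''(s)=-1/s$.
\item[(ii)] On $U:=\Omega\cap\{\rho<s_0\}$ put $W:=L_\psi d(\cdot,x_0)+A\phi(\rho)+M(t_1-t)^2/\tau^2$, where $f=d_Y(u,\psi(x_0))\ls M$.
\item[(iii)] Laplacian comparison for $d_{x_0}$ and $d_{y_0}$ gives ${\bf\Delta}W\ls\big(\frac{CL_\psi}{\rho}+\frac{CA}{R_{\rm ext}}\ln\frac1\rho-\frac A\rho\big)\meas\ls-\frac{A}{4\rho}\meas$ for $s_0$ small and $A\gs 4CL_\psi$. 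This is $\ls\partial_tW\,\meas$ on $(t_1-\tau,t_1)$ once $A\gs 8Ms_0/\tau$.
\item[(iv)] On the parabolic boundary, $W\gs L_\psi d(\cdot,x_0)\gs d_Y(\psi,\psi(x_0))$. Also $W\gs M$ on $\{\rho=s_0\}$ for $A$ large, and $W\gs M$ at $t=t_1-\tau$.
\item[(v)] Lemma \ref{BRHMHF-lem-3.2} then gives $f(x,t_1)\ls L_\psi d(x,x_0)+A\,d(x,x_0)\ln\frac{1}{d(x,x_0)}$ for $d(x,x_0)<s_0$ and $t_1>\tau$.
\end{itemize}
This yields (\ref{equ-Zygmund-1}), and with interior estimates also (3), using neither heat-kernel gradients nor perimetric regularity. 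The term $L_\psi d(\cdot,x_0)$ (absorbable precisely because $\rho\ls d(\cdot,x_0)$) and the time term are exactly what your $(1-\epsilon)$ barrier in (3) leaves implicit.

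For (1)--(2) your tools differ from the paper's. The paper transfers the elliptic boundary regularity of a localized torsion function ${\bf\Delta}h=-1$ to the flow via the supersolution $h/L_R+(T-t)^2/(2L_R)^2$ (Theorem \ref{thm:comparison-heat}). Your ``parabolic Wiener test'' in (1) is precisely that transfer, so it must be proved rather than invoked. The weak-Harnack iteration in (2) is a legitimate alternative.
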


Considering the smooth setting, the Wiener criterion has been extended to harmonic maps from bounded Lipschitz domains in $\mathbb R^n$, whose image lies in a convex ball \cite{ACM91}. Jost-Meier \cite{Jost-M83} has established the H\"older boundary regularity for minima of certain quadratic functionals, assuming $C^1$-continuity of the boundary $\partial \Omega$ for $\Omega\subset\mathbb R^n$.  
 Serbinowski \cite{Ser94}  proved that any harmonic map into a $CAT(0)$ space is globally H\"older continuous, assuming that the boundary of $\Omega\subset\mathbb R^n$ is $C^1$ and that the data is Lipschitz continuous.

\begin{remark}\label{rem:boundary-regularity-hmhf}
 (1)   The exterior ball condition in Theorem \ref{thm:main-BRHMHF} (3) is nearly indispensable, as illustrated in the following simple examples. Let $a\in(1/2,1)$ and consider the cone
 $$C_a:=\{(\rho,\theta) |\ 0<\rho<+\infty,\ 0<\theta<\pi/a\} $$
with  the vertex $O=(0,0)$, where $(\rho, \theta)$ are the polar coordinates in $\mathbb R^2$. Then the domain $\Omega_a:=C_a\cap B_1(O)$ satisfies only an exterior cone condition at  $O$ with the cone angle $(2-1/a)\pi<\pi$. A direct calculation shows that the function $f_a:=\rho^{a}\sin(a\theta)$ is harmonic on $\Omega_a$ and has Lipschitz boundary data (indeed $f_a(x)=0$ for any $x\in\partial C_a\cap B_1(O)$). Obviously, it is not Lipschitz near the vertex $O$. 

(2)  The Zygmund-type estimate in (\ref{equ-Zygmund-1}) is optimal, even for a smooth domain in the Euclidean setting $X=\mathbb R^n$. This can be understood by the following Hardy-Littlewood's example.  
Let  $B^2_{1}(0)=\{(r,\theta)|\ 0\ls r<1, \theta\in(-\pi,\pi]  \}$ be the unit ball in $ \mathbb R^2$, and let $f_0$ be defined on $S_1=\partial B^2_1(0)$ by
$$f_0(\theta)=|\sin\theta|  \  \ {\rm if}\ \  |\theta|\ls \pi/2 \qquad {\rm and}\qquad   f_0(\theta)=1\ \ {\rm otherwise}.$$
Then the harmonic function (a static solution of the heat equation) on $B^2_{1}(0)$ with the data  $f_0$ is uniquely determined by the Poisson integral formula,
$$f(r,\theta_0)= \frac{1-r^2}{2\pi}\int_{ S^1} \frac{ f_0(\theta)}{|(r,\theta_0),(1,\theta)|_{\mathbb R^2}^2 }{\rm d}\theta = \frac{1-r^2}{2\pi}\int_{S^1} \frac{f_0(\theta)}{ r^2+1-2r\cos(\theta-\theta_0)   }{\rm d}\theta.$$
 By direct calculation, and noticing $ d((r,0),\partial B_1^2(0))=1-r$, we have
 \begin{equation*}
 \begin{split}
2\pi\cdot  f(r, 0) &= (1-r^2)\int_{-\pi}^\pi\frac{ |\sin\theta| }{r^2+1-2r\cos \theta }{\rm d}\theta=2(1-r^2)\int_{0}^\pi\frac{  \sin\theta  }{r^2+1-2r\cos \theta }{\rm d}\theta\\
   & =2\frac{(1-r^2)}{r}\ln\left(\frac{1+r}{1-r}\right)\gs (1-r)\ln(\frac{1}{1-r}).
    \end{split}
 \end{equation*}

\end{remark}

 We will organize this paper as follows.   In Section \ref{sect2}, we will provide some necessary material on  $RCD(K, N)$ spaces. In Section \ref{sect3}, we will consider Wiener's criterion for parabolic equations on $RCD(K,N)$ spaces. In Section \ref{sect4}, we will give the optimal H\"older boundary behaviors of nonnegative subsolutions to the heat equations, assuming an exterior ball condition. A Zygmund-type estimate for the subsolutions of heat equations will be established in Section \ref{sect-5}. Section \ref{sect-6} is the proof of the main Theorem \ref{thm:main-BRHMHF}.  \\

{\bf Acknowledgements} The third author was partially supported by National Key R\&D Program of China (No. 2022YFA1005400) and NSFC 12271530. The second author was partially supported by NSFC 12426202.

\section{Preliminaries and notations}\label{sect2}

 Let $(X,d,\meas)$ be a metric measure space, i.e., $(X,d)$ is a complete and separable metric space equipped with a non-negative Borel measure which is finite on any ball $B_r(x)\subset X$ and ${\rm supp}(\meas)=X$.
  Given an open domain $\Omega\subset X$, we denote by $Lip(\Omega)$ (resp.  $C_0(\Omega)$, $Lip_0(\Omega)$, $Lip_{\rm loc}(\Omega)$) the space of Lipschitz continuous (resp. continuous with compact support, Lipschitz continuous with compact support, locally Lipschitz continuous) functions on $\Omega$.
  We denote by $L^p(A):=L^p(A, \meas)$ for any $p\in[1,+\infty]$ and any $\meas$-measurable subset $A\subset X$. Recall that, for any $f\in L^1(A)$, we set
\begin{equation*}\label{equ:avarage-functions}
	f_A:=\fint_Af{\rm d}\meas:=\frac{1}{\meas(A)}\int_Af{\rm d}\meas.
\end{equation*}

 \subsection{Basic calculus on $RCD(K,N)$ spaces}

 Let $f\in C(X)$, the \emph{pointwise Lipschitz constant} or \emph{slope}, ${\rm Lip}f: X\to [0,+\infty]$, is given by
 \begin{equation}\label{equ:BRHMHF-2.1}
 {\rm Lip}f(x):= \limsup_{y\to x}\frac{|f(y)-f(x)|}{d(x,y)} 
     \end{equation}
 if $x$ is not isolated, and ${\rm Lip}f(x)=0$ otherwise.
 The \emph{Cheeger energy} ${\rm Ch}: L^2(X)\to [0,+\infty]$ is
 $${\rm Ch}(f):=\inf\Big\{\liminf_{j\to +\infty}\int_X({\rm Lip}f_j)^2{\rm d}\meas:\ f_j\in Lip_{\rm loc}(X)\cap L^2(X),\ \ f_j\overset{L^2}{\to }f\Big\}.$$
The Sobolev space
$$W^{1,2}(X)=W^{1,2}(X,d,\meas):=\{f\in L^2(X)|\ {\rm Ch}(f)<\infty\}$$
with the norm
\begin{equation*}
    \|f\|^2_{W^{1,2}(X)}:=\|f\|^2_{L^2(X)}+{\rm Ch}(f).
\end{equation*}

A metric measure space $(X,d,\meas)$ is called \emph{infinitesimally Hilbertian} if the Sobolev space $W^{1,2}(X)$ is a Hilbert space.  For an infinitesimally Hilbertian space $(X,d,\meas)$, it was proved \cite{Gig15,GP20} that  for any $f,g\in W^{1,2}(X)$ the limit
  \begin{equation}\label{BRHMHF-equ-2.2}
  \ip{\nabla f}{\nabla g}:=\lim_{\epsilon\to0^+}\frac{|\nabla (f+\epsilon g)|^2-|\nabla f|^2}{2\epsilon}
  \end{equation}
    exists and is in $L^1(X)$. 
    
 Let $K\in \mathbb R$ and $N\in[1,+\infty]$,  several equivalent definitions exist for the \emph{Riemannian curvature-dimension condition}, denoted by $RCD(K,N)$, on metric measure spaces \cite{EKS15, AMS16, AGS14b}. An $RCD(K,N)$ space is a generalized notion of $X$ having Ricci curvature bounded from below by $K$ and the dimension bounded from above by $N$. We refer the readers to \cite{EKS15,AMS16, AGS14b,Amb18} for the precise definition of the RCD condition. Here, we list some necessary properties for our purpose in the present paper.

From now on, we assume always that $(X,d,\meas)$ is an $RCD(K,N)$ space for some $K\in\mathbb R$ and $N\in [1,+\infty)$. Since an $RCD(K,N)$ space must be an $RCD(K',N')$ space for any $K'\ls K$ and $N'\gs N$. Without loss of generality, the parameters $K, N$ are always assumed to be $K\ls0$ and $N>1$.
  It holds on an $RCD(K,N)$ space $(X,d,\meas)$:\\
$\bullet$ (Local volume doubling) Let $R>0$. There exists a constant $C_D>0$, depending only on $N,K,R$ such that 
 \begin{equation*}\label{BRHMHF-equ-2.4}
 	\frac{\meas(B_{s}(x))}{\meas(B_{r}(x))}\ls C_D \cdot\left(\frac{s}{ r}\right)^N,\quad \forall\ 0<r<s<R,\ \ \forall \ x\in X.
 	 \end{equation*}
$\bullet$ (Locally weak (1,1)-Poincar\'e inequality) There exists a constant $C_P>0$, depending only on $N,K,R$ such that 
\begin{equation*}\label{equ-add-Poincare-ineq}
	\fint_{B_r(x)}|f-f_{B}|{\rm d}\meas\ls C_P \cdot r\fint_{B_{2r}(x)}|\nabla f|{\rm d}\meas,\quad \forall 0<r<R,\ \ \forall \ x\in X,
\end{equation*}
for any locally Lipschitz function $f$ on $X$ (see \cite{Raj12}).\\
$\bullet$  For any $f\in W^{1,2}(X)$,  there exists a  {\it minimal weak upper gradient} $|\nabla f|$ such that
 $${\rm Ch}(f)=\frac{1}{2}\int_X|\nabla f|^2(x){\rm d}\meas(x),$$
and for any $f\in Lip(X) $ that $|\nabla f|(x)={\rm Lip}f(x)$  $\meas-$a.e. (see \cite{Che99,AGS14a}).

 Let $\Omega\subset X$ be an open set. A  function $f\in L^2_{\rm loc}( \Omega)$ belongs to
$W^{1,2}_{\rm loc}(\Omega)$, provided, for any    $\chi\in Lip_0(\Omega)$   it holds
$f\chi\in W^{1,2}(X)$, where $f\chi$ is understood to be  $0$ outside of $\Omega$.
 The space
$$W^{1,2}(\Omega):=\big\{f\in W^{1,2}_{\rm loc}(\Omega): \ {\rm both}\  f\ {\rm and}\  |\nabla f|\in L^2(\Omega)\big\},$$
and the space $W_0^{1,2}(\Omega)$ is defined as the $W^{1,2}(X)$-closure of the space of functions $f\in Lip_0(\Omega)$, where $f\in Lip_0(\Omega)$ is understood to be $0$ outside of $\Omega$.

When $\Omega$ is bounded and ${\rm diam}(\Omega)< {\rm diam}(X)$, the canonical Dirichlet form, $(\mathscr E_{\Omega},W^{1,2}_0(\Omega))$, is given by
\begin{equation*}
\mathscr E_\Omega(f):=\int_\Omega|\nabla f|^2{\rm d}\meas,\qquad f\in W^{1,2}_0(\Omega).
\end{equation*}
This canonical Dirichlet form is strongly local and regular   (see, for example, the proof of \cite[Lemma 6.7]{AGS14b} and \cite{Stu95}).
  Its associated infinitesimal generator is $\Delta_\Omega$ with domain $D(\Delta_\Omega)$, and the associated analytic semi-group is $(H^\Omega_tf)_{t\gs0}$ for any $f\in L^2(\Omega)$, which is given by 
 $$ H^\Omega_tf(x)=\int_\Omega p^\Omega_t(x,y)f(y){\rm d}\meas,\quad t\geqslant 0,\ \ \forall f\in L^2(\Omega),$$
  where $p^{\Omega}_t(x,y)$ is the Dirichlet heat kernel on $\Omega$. Assuming that $\Omega$ satisfies a uniformly exterior ball condition with radius $R_{\rm ext}\in(0,{\rm diam}(\Omega)/100)$,
an upper bound has been proved in \cite[Theorem 1.10]{ZZ25}  for any $T\gs R^2_{\rm ext}$ that
\begin{equation}
	\label{BRHMHF-equ-1.6}
	p^\Omega_t(x,y)\ls \frac{d(x,\partial\Omega)\cdot d(y,\partial\Omega)}{t}\cdot\frac{ c_{T}}{\meas(B_{\sqrt t}(y))} \exp\left(-\frac{d^2(x,y)}{c_T \cdot t}  \right), \quad \end{equation}
for any $t\in(0,T)$ and any $x,y\in \Omega$, where the constant $c_T $ depends only on $N,K,\Omega$ and $T$.

 Let us recall the definition of the  \emph{measure-valued Laplacian}.
 \begin{defn}[measure-valued Laplacian]\label{BRHMHF-def-2.4}
 Given a function $f\in W^{1,2}_{\rm loc}(\Omega)$,  its measure-valued  Laplacian $\mathbf\Delta  f$ is defined as a (linear) functional
\begin{equation*}
{\bf \Delta} f(\phi):=-\int_\Omega \ip{\nabla f}{\nabla \phi} {\rm d}\meas,\qquad \forall \phi\in Lip_0(\Omega).
\end{equation*}
 \end{defn}
When  $f\in W^{1,2}(\Omega)$, $\mathbf\Delta f$ can be extended to a functional on $W^{1,2}_0(\Omega).$ When $f\in W^{1,2}_0(\Omega)$, it was proved \cite{Gig15} that if there is   $g\in L^2(\Omega)$ such that $\mathbf \Delta f= g$ in the sense of distributions  then $f\in D(\Delta_{\Omega})$ and $\Delta_\Omega f=g.$
 Conversely, it is clear that if $f\in D(\Delta_{\Omega})$ and $\Delta_\Omega f=g,$ then $\mathbf \Delta f= g$ in the sense of distributions.

Given $f\in W^{1,2}_{\rm loc}(\Omega)$ and a signed Radon measure $\mu$, the notion ``${\bf \Delta} f \gs \mu$ in the sense of distributions" means that
 $$-\int_\Omega \ip{\nabla f}{\nabla \phi} {\rm d}\meas\gs \int_\Omega  \phi {\rm d}\mu,\qquad \forall\ \phi\in Lip_0(\Omega),\ \phi\gs0.$$	
In this case, the functional ${\bf \Delta}f$ provides a signed Radon measure on $\Omega$, denoted by ${\bf \Delta}f$ again  (by the Riesz representation theorem). Therefore, in this case, we write also  ``${\bf \Delta}f\gs \mu$ as measures". If the measure $\mu=g\cdot\meas$ for some $g\in L^1_{\rm loc}(\Omega)$, we  denote by ``${\bf \Delta} f \gs g$ in the sense of distributions" too.

\begin{defn}
	\label{BRHMHF-def-3.1}
Let $0<T\ls\infty$ and let $\Omega_T:=\Omega\times(0,T)$. A function $u\in L^2_{\rm loc}\big((0,T); W^{1,2}_{\rm loc}(\Omega)\big)$ is called a {\emph{super-solution of the heat equation}}, if for every open set $U\Subset\Omega_T$ and for all non-negative function $\phi(x,t)\in Lip(\Omega_T)$ with ${\rm supp}(\phi)\subset U$, we have
$$\int_U u\frac{\partial\phi}{\partial t}{\rm d}\meas {\rm d}t\ls \int_U\ip{\nabla u}{\nabla \phi}
	{\rm d}\meas {\rm d}t.$$
A function $u$ is called a {\emph{sub-solution of the heat equation}} if $-u$ is a super-solution of the heat equation.
\end{defn}

 We need the parabolic comparison principle given in \cite[Theorem 4.1]{KM15-pams}.
\begin{lemma}\label{BRHMHF-lem-3.2}
	Let $u\in L^2\big((0,T); W^{1,2}(\Omega)\big)$ be a super-solution of heat equation and let $v\in L^2\big((0,T); W^{1,2}(\Omega)\big)$ be a sub-solution of heat equation. Suppose $u\gs v$ near the parabolic boundary of $\Omega_T$ in the sense that for almost every $0<t<T$ we have the lateral boundary condition
	$$\big(v(\cdot,t)-u(\cdot,t)\big)_+\in W^{1,2}_0(\Omega)$$ and also
	the initial condition
	$$\frac 1 \varepsilon\int_0^\varepsilon\int_\Omega(v-u)_+^2{\rm d}\meas {\rm d}t\to 0,\quad {\rm as}\ \ \varepsilon\to0^+.$$
		Then
		$$u\gs v,\quad \meas\times \mathscr L^1{\rm-a.e.\  in}\ \  \Omega_T,$$ where $\mathscr L^1$ is the 1-dimensional Lebesgue measure on $(0,T)$.
		\end{lemma}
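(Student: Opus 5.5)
The statement immediately above is Lemma \ref{BRHMHF-lem-3.2}, the parabolic comparison principle. I would prove it by an energy estimate on $w:=(v-u)_+$, followed by a Gronwall-type argument in time.

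First, observe that $v-u$ is a sub-solution, since the notion is linear: the difference of a sub-solution and a super-solution is a sub-solution. For the sub-solution $v-u$, the inequality reads
\[
\int_U (v-u)\,\partial_t\phi \,{\rm d}\meas\,{\rm d}t \gs \int_U \ip{\nabla (v-u)}{\nabla\phi}\,{\rm d}\meas\,{\rm d}t, \qquad \phi\gs 0 .
\]
Formally one wants to test with $\phi=w\,\eta(t)$, where $\eta$ is a cutoff in time supported in $(\varepsilon,s)$. The lateral condition $w(\cdot,t)\in W^{1,2}_0(\Omega)$ is exactly what allows the spatial support restriction to be dropped: approximate $w(\cdot,t)$ by $Lip_0(\Omega)$ functions in $W^{1,2}$. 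The obstruction is that $w$ is not differentiable in time. I would handle this by a Steklov average $[f]_h(x,t)=\frac1h\int_t^{t+h}f(x,\tau)\,{\rm d}\tau$, which is the standard route. Rewrite the inequality in Steklov-averaged form: for a.e. $t$, $\partial_t[v-u]_h$ acts on nonnegative $W^{1,2}_0(\Omega)$ test functions, giving $\int_\Omega \partial_t[v-u]_h\,\varphi+\ip{\nabla[v-u]_h}{\nabla\varphi}\ls 0$. Then test with $\varphi=([v-u]_h)_+$ multiplied by the time cutoff. Since $\partial_t[v-u]_h\,([v-u]_h)_+=\frac12\partial_t([v-u]_h)_+^2$, integrating over $(t_1,t_2)$ gives
\[
\frac12\int_\Omega([v-u]_h)_+^2(t_2)-\frac12\int_\Omega([v-u]_h)_+^2(t_1)+\int_{t_1}^{t_2}\!\!\int_\Omega\ip{\nabla[v-u]_h}{\nabla([v-u]_h)_+}\ls 0 .
\]
The gradient term is $|\nabla([v-u]_h)_+|^2\gs0$. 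This uses the locality of the Cheeger energy and the chain rule for the positive part, both available on infinitesimally Hilbertian spaces.

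Letting $h\to0$ (the averages converge in $L^2(0,T;W^{1,2})$) gives, for a.e. $0<t_1<t_2<T$,
\[
\int_\Omega w^2(t_2)\,{\rm d}\meas\ls\int_\Omega w^2(t_1)\,{\rm d}\meas .
\]
Next average this over $t_1\in(0,\varepsilon)$: $\int_\Omega w^2(t_2)\ls\frac1\varepsilon\int_0^\varepsilon\int_\Omega w^2$, and the right side tends to $0$ by the initial condition. Hence $w=0$ for a.e. $t_2$, which means $u\gs v$ almost everywhere in $\Omega_T$.

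The main obstacle is the time-regularization step: justifying that $[v-u]_h$ satisfies the averaged inequality against $W^{1,2}_0(\Omega)$ test functions on the whole of $\Omega$. The definition only allows test functions compactly supported in $U\Subset\Omega_T$, so one needs a density and cutoff argument, using the lateral condition together with the fact that $u,v\in L^2(0,T;W^{1,2}(\Omega))$ globally rather than only locally. Alongside this, the passage $h\to0$ must be carried out at almost every pair of times $(t_1,t_2)$.
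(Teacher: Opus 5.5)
Your argument is correct, but it is not how the paper handles this lemma. The paper gives no proof; it imports the statement from \cite[Theorem 4.1]{KM15-pams}. That is a comparison principle established for doubling metric measure spaces with a Poincar\'e inequality, where super- and subsolutions are defined variationally through upper gradients and no bilinear pairing is assumed, so one cannot simply subtract the two inequalities there.

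You instead give a self-contained proof that exploits the infinitesimal Hilbertianity of the $RCD$ space. Bilinearity of $\ip{\nabla\cdot}{\nabla\cdot}$ makes $v-u$ a sub-solution in the sense of Definition~\ref{BRHMHF-def-3.1}. Locality gives $\ip{\nabla f}{\nabla f_+}=|\nabla f_+|^2$. The Steklov-averaged energy inequality, combined with the averaged initial condition, then closes the argument.

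The citation is shorter and covers non-quadratic energies. To apply it, however, one must observe that the paper's distributional notion of super-/sub-solution agrees with the variational one. This holds here by testing with $s\phi$ and letting $s\to0^+$, since $|\nabla(u+s\phi)|^2=|\nabla u|^2+2s\ip{\nabla u}{\nabla\phi}+s^2|\nabla\phi|^2$. Your route works directly with the definition the paper actually uses and avoids that translation.

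When you write it out, make two routine points explicit.
\begin{itemize}
\item The lateral hypothesis concerns $(v-u)_+$, not its Steklov average. To test with $([v-u]_h)_+$, note that $0\ls([v-u]_h)_+\ls[(v-u)_+]_h\in W^{1,2}_0(\Omega)$. Then invoke the lattice property of $W^{1,2}_0(\Omega)$: if $0\ls f\ls g$ with $f\in W^{1,2}(\Omega)$ and $g\in W^{1,2}_0(\Omega)$, then $f\in W^{1,2}_0(\Omega)$. This property holds in this setting.
\item The averaged inequality is first obtained for each fixed $\varphi$ outside a $\varphi$-dependent null set of times. Pass to the time-dependent test function either through a countable dense family of nonnegative elements of $W^{1,2}_0(\Omega)$ or through the time-integrated formulation.
\end{itemize}
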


\section{The Wiener criterion and H\"older boundary continuity}\label{sect3}

Let $(X,d,\meas)$ be an $RCD(K,N)$ space with $K\ls0$ and $N\in(1,+\infty)$.  We fix a bounded open subset  $\Omega$  with $\meas(\partial\Omega)=0$ and ${\rm diam}(\Omega)< {\rm diam}(X)$.

In this section, we will use $c_1, c_2, C_1, \cdots$ to denote  generic  positive constants depending only on $K$, $N$, $ {\rm diam}(\Omega), \meas(\Omega)$ and $R_{\rm den}$ in Definition \ref{BRHMHF-def-1.5}; they may differ from line to line.

 For any ball 
 $B_r(x)\subset X$  and a measurable set $E\subset B_r(x)$, the \emph{capacity} is defined  \cite{KKM00} by
$${\rm cap}(E, B_{2r}(x)):=\inf\left\{\int_{B_{2r}(x)}|\nabla f|^2{\rm d}\meas  \big|\ f\in W^{1,2}_0(B_{2r}(x))\ {\rm such \ that }\ f\gs 1\ {\rm on   }\ E \right\}. $$

Recall the following oscillation estimates in \cite{Bjo02}, (see also \cite[Theorem 11.23]{BB-book} and \cite[Equ. (8.81)]{GT01}).
\begin{lemma}\label{lem:boj-boundary-estimate}
Let $U\subset \Omega$ be an open subset, and let $w\in W^{1,2}(X)\cap C(\overline U)$ and $f\in L^q(U)$ for some $q>\max\{N/2,3/2\}$. Suppose that  $u\in W^{1,2}(U)$ satifies 
${\bf\Delta}u=f$ on $U$ in the sense of distributions with the boundary condition $u-w\in W_0^{1,2}(U)$. Let $x_0\in \partial U$ and 
   $0<R< {\rm diam}(\Omega)/100$. Then  
$${\rm osc}_{U_R}u\ls \left(1-C\lambda_{x_0}(R)\right)\cdot {\rm osc}_{U_{4R}}u+C\lambda_{x_0}(R)\cdot {\rm osc}_{U_{4R}}w+CR^{2-N/q}\|f\|_{L^q(U)},$$
 where $U_R:=U\cap B_R(x_0)$ and 
\begin{equation}\label{equ:wiener-criterion-1}
	\lambda_{x_0}(R):=\frac{{\rm cap}(B_R(x_0)\setminus U, B_{2R}(x_0))}{{\rm cap}(B_R(x_0), B_{2R}(x_0))}. \end{equation}
 \end{lemma}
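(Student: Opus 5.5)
The plan is to prove this by the standard capacitary-potential argument of Maz'ya type, adapted to the $RCD(K,N)$ setting. We use only local doubling, the weak $(1,1)$-Poincar\'e inequality (hence a $(2,2)$-Poincar\'e and Sobolev inequality), and the resulting De Giorgi--Moser theory for the strongly local Dirichlet form. Set $u_{4R}:=U\cap B_{4R}(x_0)$ with $R<{\rm diam}(\Omega)/100$. Then all constants from doubling and Poincar\'e are uniform, depending on $N,K,{\rm diam}(\Omega)$.

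\emph{Step 1: removing $f$.} Let $v\in W^{1,2}_0(U_{4R})$ solve ${\bf\Delta}v=f$ in $U_{4R}$. Extend $v$ by zero. De Giorgi--Moser iteration with the Sobolev inequality on balls of radius $4R$, using the condition $q>N/2$, gives
\[
\|v\|_{L^\infty}\ls CR^{2-N/q}\|f\|_{L^q(U)}.
\]
Here $\meas(B_{4R})$ is normalized through doubling, and the condition $q>3/2$ handles the case $N\ls 3$, where one uses a fixed Sobolev exponent. Then $h:=u-v$ is harmonic in $U_{4R}$ and $h-w\in W^{1,2}_0$ locally near $\partial U\cap B_{4R}$. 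Since ${\rm osc}\,u\ls{\rm osc}\,h+2\|v\|_\infty$, it suffices to prove the estimate for $h$, with ${\rm osc}_{U_{4R}}h$ replaced by ${\rm osc}_{U_{4R}}u+CR^{2-N/q}\|f\|$.

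\emph{Step 2: one-sided decay via the capacitary potential.} Put $M:=\sup_{U_{4R}}h$ and $m_w:=\sup_{B_{4R}\cap\partial U}w$; we may assume $M>m_w$. The function $s:=(h-m_w)_+$, extended by $0$ on $B_{4R}\setminus U$, is a subharmonic function in $B_{4R}(x_0)$. This holds because the truncation kills the boundary and the lateral condition $h-w\in W^{1,2}_0$ lets us test with $\phi\cdot 1_U$. Hence $S:=(M-m_w)-s\gs0$ is superharmonic in $B_{4R}$ and $S=M-m_w$ on $B_R\setminus U$.

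Let $P$ be the capacitary potential of $E:=B_R\setminus U$ in $B_{2R}$. By the comparison principle (elliptic version of Lemma \ref{BRHMHF-lem-3.2}), $S\gs (M-m_w)P$ in $B_{2R}$. The key estimate is then
\begin{equation*}
\inf_{B_R}P\gs c\,\lambda_{x_0}(R).
\end{equation*}
This follows by testing the equation of $P$ against a cutoff, which gives
\[
{\rm cap}(E,B_{2R})\ls C\,R^{-2}\meas(B_R)\,\inf_{B_R}P,
\]
combined with the weak Harnack inequality for the nonnegative superharmonic $P$ on $B_{2R}$ and with ${\rm cap}(B_R,B_{2R})\simeq \meas(B_R)/R^2$ (doubling and Poincar\'e). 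Consequently
\[
\sup_{U_R}h-m_w\ls (1-c\lambda)(M-m_w).
\]

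\emph{Step 3: combining.} Apply Step 2 also to $-h$, with $m'_w:=\inf w$. Adding the two inequalities gives
\[
{\rm osc}_{U_R}h\ls(1-c\lambda){\rm osc}_{U_{4R}}h+c\lambda\,{\rm osc}_{U_{4R}}w.
\]
Together with Step 1 this yields the claim.

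The main obstacle is the capacity lower bound for $\inf_{B_R}P$ in Step 2. It requires the weak Harnack inequality for supersolutions up to where the potential equals $1$, together with a two-sided comparison ${\rm cap}(B_R,B_{2R})\simeq\meas(B_R)R^{-2}$. In the $RCD$ setting both are available from Bj\"orn--Bj\"orn's nonlinear potential theory on doubling Poincar\'e spaces, which I would invoke rather than reprove.
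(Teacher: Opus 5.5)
The paper does not prove this lemma. It imports the estimate from Bj\"orn's boundary-regularity theory on doubling--Poincar\'e spaces (and from Bj\"orn--Bj\"orn and Gilbarg--Trudinger), and only uses its consequences in Corollaries \ref{lem:Wiener-criterion} and \ref{lem:Holder-continuous-for-torsion}. Your proposal is correct: it reconstructs the standard argument behind such estimates, specialised to the linear equation, and cites only one ingredient, the lower bound for the capacitary potential.

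You first split off the right-hand side using the $W^{1,2}_0(U_{4R})$-solution $v$ of ${\bf\Delta}v=f$. The factor $R^{2-N/q}$ comes from the scale-invariant bound $\|v\|_{L^\infty}\ls CR^2\meas(B_{4R}(x_0))^{-1/q}\|f\|_{L^q}$ together with $\meas(B_{4R}(x_0))\gs cR^N$; this is where the constants pick up their dependence on ${\rm diam}(\Omega)$ and $\meas(\Omega)$. You then compare the truncated harmonic part with the capacitary potential $P$ of $B_R\setminus U$ in $B_{2R}$. Everything then reduces to the single Maz'ya-type bound $\inf_{B_R}P\gs c\,\lambda_{x_0}(R)$.

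Compared with the bare citation, your route shows why the decay factor is linear in $\lambda_{x_0}(R)$, which is the case $p=2$ of the exponent $1/(p-1)$. It even gives the estimate with $U_{2R}$ in place of $U_{4R}$ on the right. The price is that it uses linearity and the comparison principle, which the quoted results (set in the Newtonian framework of nonlinear potential theory) do not need. Conversely, the citation leaves implicit both the treatment of $f$ and the identification of the $RCD$ Sobolev and harmonic notions with the Newtonian ones.

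A few steps need more careful justification, though none is a real gap.
\begin{itemize}
\item \emph{Subharmonicity of the zero extension.} The zero extension of $s=(h-m_w)_+$ is subharmonic in $B_{4R}$ because $\phi\min\{s/\epsilon,1\}$ is an admissible test function in $W^{1,2}_0(U_{4R})$; you then let $\epsilon\to0$. The function $\phi\cdot 1_U$ is not an admissible test function.
\item \emph{Comparison with $P$.} The set $B_{2R}\setminus(B_R\setminus U)$ need not be open. So $S\gs(M-m_w)P$ is best obtained from the variational inequality of the obstacle problem defining $P$, tested with $\min\{P,S/(M-m_w)\}$. Combine this with the superharmonicity of $S$ tested with $\big(P-S/(M-m_w)\big)_+$.
\item \emph{The potential bound.} Weak Harnack cannot be applied to $P$ on $B_{2R}$ itself, because $P$ is superharmonic only in $B_{2R}$. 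Instead, the cutoff test plus a Caccioppoli estimate near $\partial B_{2R}$ gives ${\rm cap}(E,B_{2R})\ls CR^{-2}\meas(B_R)\sup_{B_{2R}\setminus B_{5R/4}}P$. You then pass to $\inf_{B_R}P$ by three steps: the maximum principle for the zero extension of $P$ (which is subharmonic off $\overline{B_R}$); the Harnack inequality near $\{d(\cdot,x_0)=5R/4\}$, where $P$ is harmonic; and chains of weak Harnack inequalities along geodesics into $B_R$.
\end{itemize}
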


Consequently, the following regularity holds.

\begin{corollary}\label{lem:Wiener-criterion}
Let $U\subset \Omega$ be an open subset such that $\partial U$ satisfies the Wiener criterion
$\sum_{j\gs1}\lambda_{x_0}(4^{-j})=+\infty$ for any $x_0\in \partial U$, where $\lambda_{x_0}(R)$ is given in (\ref{equ:wiener-criterion-1}). Let $w\in W^{1,2}(X)\cap C(\overline U)$. Suppose that  $u\in W^{1,2}(U)$ satifies 
$${\bf\Delta}u=f,\quad u-w\in W_0^{1,2}(U)$$
for some $f\in L^q(U)$ in the sense of distributions, $q>\max\{N/2,3/2\}$. Then $u\in C(\overline U)$.
 \end{corollary}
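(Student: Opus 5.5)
The plan is to combine two ingredients: interior continuity of $u$ in $U$, and continuity up to the boundary at each $x_0\in\partial U$ obtained by iterating the oscillation estimate in Lemma \ref{lem:boj-boundary-estimate}.

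\emph{Interior continuity.} Since ${\bf\Delta}u=f$ with $f\in L^q(U)$ and $q>N/2$, standard De Giorgi--Nash--Moser theory on $RCD(K,N)$ spaces applies; it uses only local doubling and the weak Poincar\'e inequality. It gives local boundedness and H\"older continuity of $u$ in $U$. The same theory also bounds $u$ near the boundary in terms of $\|w\|_{L^\infty}$ and $\|f\|_{L^q}$, via a comparison or Moser iteration applied to $(u-\sup w)_+\in W^{1,2}_0(U)$. Hence every oscillation appearing below is finite.

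\emph{Boundary continuity.} Fix $x_0\in\partial U$ and $\varepsilon>0$.
\begin{enumerate}
\item By continuity of $w$ on $\overline U$, choose $R_0$ with ${\rm osc}_{U_{4R_0}}w<\varepsilon$.
\item Set $R_j=4^{-j}R_0$ and $a_j={\rm osc}_{U_{R_j}}u$. Lemma \ref{lem:boj-boundary-estimate} gives
$$a_{j+1}\ls (1-C\lambda_j)a_j+C\lambda_j\varepsilon+CR_j^{2-N/q}\|f\|_{L^q},\qquad \lambda_j:=\lambda_{x_0}(R_{j+1}).$$
\item Put $b_j=a_j-\varepsilon$. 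Since $C\lambda_j\ls 1$ (the capacity ratio is at most $1$, after adjusting the constant), this becomes
$$b_{j+1}\ls (1-C\lambda_j)b_j+\delta_j,\qquad \delta_j:=CR_j^{2-N/q}\|f\|_{L^q}.$$
\item Iterate from $j_0$ to $m$:
$$b_m\ls \prod_{j=j_0}^{m-1}(1-C\lambda_j)\, b_{j_0}+\sum_{j\gs j_0}\delta_j.$$
\item The series $\sum_j\delta_j$ is geometric because $2-N/q>0$. Choose $j_0$ so that its tail is less than $\varepsilon$.
\item The Wiener criterion gives $\sum_j\lambda_{x_0}(4^{-j})=+\infty$. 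Provided $R_0$ is itself a power of $4$ (or after adjusting indices, using that $\lambda$ is comparable at comparable radii by doubling), $\sum_j\lambda_j=\infty$. Then $\prod_j(1-C\lambda_j)\ls\exp(-C\sum_j\lambda_j)\to0$.
\item Hence $\limsup_m a_m\ls 2\varepsilon$. Letting $\varepsilon\to0$ gives ${\rm osc}_{U_r}u\to0$ as $r\to0$.
\end{enumerate}

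\emph{Defining the boundary values and concluding.} It remains to identify the limit with $w(x_0)$.
\begin{itemize}
\item The vanishing oscillation shows that $u$ has a limit $\ell$ at $x_0$.
\item Apply the same argument to $u-w(x_0)$ with data $w-w(x_0)$, using a one-sided version of the lemma, i.e.\ for $\sup_{U_R}(u-k)_+$ with $k=\sup_{U_{4R}}w$. This gives $\limsup_{x\to x_0}u\ls w(x_0)$, and symmetrically $\liminf_{x\to x_0}u\gs w(x_0)$.
\item Setting $u=w$ on $\partial U$ then yields $u\in C(\overline U)$.
\end{itemize}

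The main obstacle is the index matching in the iteration. The criterion is stated on the dyadic sequence $4^{-j}$, while $R_0$ is arbitrary and the lemma relates $R$ and $4R$. I would first try taking $R_0=4^{-k}$ directly, which keeps the indices aligned. The one-sided identification of the boundary value also needs care, because the lemma as quoted controls only oscillations.
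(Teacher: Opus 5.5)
Your proposal is correct and follows the route the paper intends: the paper gives no argument beyond ``consequently'' after quoting Bj\"orn's oscillation estimate, and your iteration along radii $4^{-k-j}$ makes that step precise, using $C\lambda\le 1$, the geometric tail coming from $R^{2-N/q}$, and the Wiener divergence forcing $\prod_j(1-C\lambda_j)\to0$. The extra ingredients you add (interior continuity, boundedness near $\partial U$, and identifying the limit with $w(x_0)$ via the one-sided form of Bj\"orn's estimate from the cited sources) are left implicit in the paper; note also that the bare conclusion $u\in C(\overline U)$ already follows from the vanishing oscillation alone, by extending $u$ to $\partial U$ by its boundary limits.
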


Similarly, if the boundary $\partial U$ satisfies an exterior density condition and the boundary data $w$ is H\"older continuous near $\partial U$, then the solution of Possion equation is H\"older continuous in $\overline U$. The precise statement is the following.

\begin{corollary}\label{lem:Holder-continuous-for-torsion}
Let $U\subset \Omega$ satisfy  a uniformly exterior density condition, and   $w\in C^\alpha(\overline U)$ for some $\alpha\in(0,1)$. Suppose that  $u\in W^{1,2}(U)$ satifies 
$${\bf\Delta}u=f,\quad u-w\in W_0^{1,2}(U)$$
for some $f\in L^q(U)$ in the sense of distributions, $q>\max\{N/2,3/2\}$. Then $u \in C^\gamma(\overline U)$ for some $\gamma\in (0,\alpha).$ 
 \end{corollary}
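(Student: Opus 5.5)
The plan is to iterate the oscillation estimate of Lemma \ref{lem:boj-boundary-estimate}, using the exterior density condition to bound $\lambda_{x_0}(R)$ uniformly from below. The boundary decay obtained this way will then be combined with the interior H\"older estimate for Poisson equations.

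\emph{Step 1: a uniform lower bound on $\lambda_{x_0}(R)$.} Fix $x_0\in\partial U$ and $R<\min\{R_{\rm den},{\rm diam}(\Omega)/100\}$. I will show $\lambda_{x_0}(R)\gs c_1>0$ with $c_1$ independent of $x_0$ and $R$.

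For the denominator, use the test function $(1-d(\cdot,B_R(x_0))/R)_+$. Together with volume doubling this gives ${\rm cap}(B_R(x_0),B_{2R}(x_0))\ls C\meas(B_R(x_0))/R^2$.

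For the numerator, let $E:=B_R(x_0)\setminus U$ and take any admissible $f\in W^{1,2}_0(B_{2R}(x_0))$ with $f\gs1$ on $E$. We may truncate so that $0\ls f\ls 1$. Since $f$ vanishes outside $B_{2R}(x_0)$, it also vanishes on a set of measure comparable to $\meas(B_{4R}(x_0))$ inside $B_{4R}(x_0)$. The density condition gives $\meas(E)\gs c_0\meas(B_R(x_0))$, so $f=1$ on a set of comparable measure as well. Apply the $(1,1)$-Poincar\'e inequality on $B_{4R}(x_0)$, then H\"older's inequality and doubling:
$$c\ls \fint_{B_{4R}}|f-f_{B_{4R}}|\ls CR\Big(\fint_{B_{8R}}|\nabla f|^2\Big)^{1/2}.$$
This yields $\int|\nabla f|^2\gs c\meas(B_R(x_0))/R^2$. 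Dividing the two bounds gives $\lambda_{x_0}(R)\gs c_1$.

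\emph{Step 2: boundary decay.} Put $\theta:=1-Cc_1\in(0,1)$; shrinking $c_1$ if needed, we may assume $\theta<1$. Since $w\in C^\alpha$, we have ${\rm osc}_{U_{4R}}w\ls CR^\alpha$. Lemma \ref{lem:boj-boundary-estimate} then gives, for $\sigma:=2-N/q>0$,
$$\omega(R)\ls \theta\,\omega(4R)+CR^{\alpha}+CR^{\sigma}\|f\|_{L^q},\qquad \omega(R):={\rm osc}_{U_R}u.$$
Iterating this along $R_j=4^{-j}R_0$ gives $\omega(R)\ls C(R/R_0)^{\gamma}$ for any $\gamma<\min\{\alpha,\sigma,\log_4(1/\theta)\}$. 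We take $\gamma<\alpha$ in this range. The constants are uniform in $x_0$, and $u$ is bounded by the case $R=R_0$ together with global boundedness; note $u$ is continuous up to the boundary since $\omega(R)\to0$.

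\emph{Step 3: global H\"older continuity.} Let $x,y\in\overline U$ and set $\delta:=d(x,\partial U)\gs d(y,\partial U)$.

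If $d(x,y)\gs\delta/4$, pick $x_0\in\partial U$ with $d(x,x_0)=\delta$. Then $x,y\in U_{6d(x,y)}$, and Step 2 gives $|u(x)-u(y)|\ls Cd(x,y)^\gamma$.

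If $d(x,y)<\delta/4$, we work in the ball $B_{\delta/2}(x)\subset U$. By Step 2, ${\rm osc}_{B_{\delta/2}(x)}u\ls C\delta^\gamma$. The interior De Giorgi--Nash--Moser H\"older estimate for ${\bf\Delta}u=f$, valid on $RCD$ spaces via doubling and Poincar\'e, gives exponent $\gamma_0$; we decrease $\gamma\ls\gamma_0$ if needed. It yields
$$|u(x)-u(y)|\ls C\Big(\frac{d(x,y)}{\delta}\Big)^{\gamma}\Big({\rm osc}_{B_{\delta/2}(x)}u+\delta^{\sigma}\|f\|_{L^q}\Big)\ls Cd(x,y)^\gamma.$$

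The main obstacle is Step 1, namely turning the measure-density condition into a capacity-density lower bound. The delicate points are the Poincar\'e argument with $f$ vanishing on a large portion of the enlarged ball, and checking that the constant $C$ in Lemma \ref{lem:boj-boundary-estimate} makes $\theta<1$. The remaining steps are standard iteration.
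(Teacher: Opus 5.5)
Your proposal is correct and follows the paper's route. The paper's proof is only your Step 1, the uniform bound $\lambda_{x_0}(R)\gs c$, which it gets by citing \cite[Lemma 3.3]{Bjo02} where you prove it directly with the $(1,1)$-Poincar\'e inequality; it leaves implicit the iteration of Lemma \ref{lem:boj-boundary-estimate} and the interior/boundary gluing that you carry out in Steps 2--3, and one small slip there is that $\theta=1-Cc_1<1$ holds automatically, whereas shrinking $c_1$ is what secures $\theta\gs 0$.
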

\begin{proof}
It suffices to check that $\lambda_{x_0}$ is bounded from below by a positive constant $c>0$ uniformly with respect to $x_0\in\partial U$ and $ R\in (0,R_0)$. In fact, from \cite[Lemma 3.3]{Bjo02} and the exterior density condition in Definition \ref{BRHMHF-def-1.5}(1), we have
$$\lambda_{x_0}(R)\gs C\cdot\frac{\meas(B_R(x_0)\setminus U)}{CR^2}\cdot \frac{R^2}{\meas(B_R(x_0))}\gs c$$for all $R\in(0,R_{\rm den}).$
\end{proof}

To construct a barrier function near the boundary,  the following fact, on the existence of a neighborhood for a given point such that it has a good boundary, is very useful.
\begin{lemma}
	\label{lem:good-neighborhood}
	Let $x_0\in X$ and $R\ls {\rm diam}(\Omega)/100$.  Then there exists a domain $\widetilde B_R$ with 
	$$B_{R}(x_0)\subset \widetilde B_R\subset B_{11R/10}(x_0)$$
	such that  $\partial \widetilde B_R$ has an exterior ball condition with the exterior ball of radius $R_{{\rm ext}}\gs R/20.$
	\end{lemma}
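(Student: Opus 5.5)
The plan is to build $\widetilde B_R$ as the complement of a union of closed balls of radius $R/20$ placed outside $B_R(x_0)$. Then every boundary point of $\widetilde B_R$ lies on the boundary of one of these balls, and that ball serves as its exterior ball.

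\emph{Construction.} Set $r:=R/20$ and let $A:=\{y\in X:\ d(y,x_0)\gs R+r\}$. Define
$$\widetilde B_R:=X\setminus \bigcup_{y\in A}\overline{B_r(y)}.$$
The inclusion $B_R(x_0)\subset\widetilde B_R$ is immediate. If $z\in \overline{B_r(y)}$ with $y\in A$, then $d(z,x_0)\gs d(y,x_0)-r\gs R$, so $z\notin B_R(x_0)$.

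For the outer inclusion, take $z$ with $d(z,x_0)\gs 11R/10=R+2r$. We need $y\in A$ with $d(y,z)\ls r$; the choice $y=z$ works, since $d(z,x_0)\gs R+r$. Hence $\widetilde B_R\subset B_{R+2r}(x_0)=B_{11R/10}(x_0)$.

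The set $\widetilde B_R$ is open because the union $\bigcup_{y\in A}\overline{B_r(y)}=\{z:\ d(z,A)\ls r\}$ is closed. This uses properness of $X$, which holds for $RCD(K,N)$ spaces, so that the distance to the closed set $A$ is attained. If $\widetilde B_R$ is not connected, replace it by its connected component containing $B_R(x_0)$; this is possible since balls in geodesic spaces are connected.

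\emph{Exterior ball condition.} Let $z\in\partial\widetilde B_R$. Then $z$ lies in the closed set $\{d(\cdot,A)\ls r\}$ and is a limit of points $z_k$ with $d(z_k,A)>r$, so $d(z,A)=r$. By properness there is $y\in A$ with $d(z,y)=r$. Because every $w\in\widetilde B_R$ has $d(w,A)>r$, we get $\overline{B_r(y)}\cap\widetilde B_R=\emptyset$. So $y$ is the required exterior center with radius $R_{\rm ext}=r=R/20$ and $d(y,z)=R_{\rm ext}$. Passing to a connected component does not change this argument, because the boundary of the component lies in $\partial\widetilde B_R$.

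\emph{Main obstacle.} The delicate point is the exactness $d(y_0,x_0)=R_{\rm ext}$ required in Definition \ref{BRHMHF-def-1.5}(3). This relies on $d(\cdot,A)$ being attained, which is where properness of $X$ is used. A secondary point is that $\widetilde B_R$ should not lose pieces near $B_R(x_0)$; this is handled by the connected-component argument above. The bound $R\ls{\rm diam}(\Omega)/100$ plays no role here beyond ensuring the radii are small, so that local doubling constants remain uniform in later uses of the lemma.
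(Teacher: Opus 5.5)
Your proof is correct and is essentially the paper's construction. In both, $\widetilde B_R$ is the set of points at distance more than a fixed amount from the region outside a slightly larger ball, and the exterior ball at a boundary point comes from a nearest point of that outside region. You differ only in placement: you center the exterior ball at the nearest point itself, whereas the paper centers it at a point on a geodesic toward the nearest point of $\partial B_{11R/10}(x_0)$. Your choice gives radius exactly $R/20$ and disjointness from $\widetilde B_R$ directly, and your connected-component step covers the requirement that $\widetilde B_R$ be a domain, which the paper leaves implicit.
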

\begin{proof}
	Define
	$$\widetilde B_R:=\{x\in B_{11R/10}(x_0): \ d(x,\partial B_{11R/10}(x_0))>R/10\}.$$
	The triangle inequality ensures $B_R(x_0)\subset \widetilde B_R$. Take any $x\in \partial \widetilde B_R$. There exists a nearest point $y_x\in \partial B_{11R/10}(x_0)$ such that $d(x,y_x)=R/10=d(x,\partial B_{11R/10}(x_0))$, since $\partial B_{11R/10}(x_0)\not=\emptyset.$ Let $\gamma_{x,y_x}$ be a shortest curve connected $x$ and $y_x$. 
	
	For any $r\in(0,R/20)$, we can find a point $z$ in $\gamma_{x,y_x}$ such that $d(x,z)=r$, and then $d(z,y_x)=R/10-r$. By using the triangle inequality again, we know that 
	$$ B_{r}(z)\cap B_R(x_0)=\emptyset.$$ 
Indeed, if there exists a point $p\in B_r(z)\cap B_R(x_0)$, then we have a contradiction 
$$11R/10=d(x_0,y_x)\ls d(x_0,p)+d(p,z)+d(z,y_x)<R+r+(R/10-r)=11R/10.$$
The proof is finished. 
\end{proof}

Let $U_1$ and $U_2$ be two bounded open sets in $X$ with $U_1\cap U_2\not=\emptyset$. From the definition of the Wiener criterion, it is clear that if their boundaries satisfy the Wiener criterion, then the boundary of $U_1\cap U_2$ also satisfies the Wiener criterion.  The same statement also holds by replacing the Wiener criterion with the exterior density condition, and also for the case of the exterior ball condition.  

Applying the above regularity to the torsion function (i.e., a solution of the Poisson equation ${\bf \Delta}h=-1$ in the sense of distributions), one can get the following constructions of continuous barriers.
\begin{lemma}
	\label{lem:barrier-elliptic}
		Let $\Omega\subset X$ be a bounded open domain satisfying the Wiener criterion, and let $w\in W^{1,2}(X)\cap C(\overline\Omega)$. Then for any  $x_0\in \partial\Omega$ and $R\in(0,{\rm diam}(\Omega)/100)$, there exist a domain $V_R(x_0)$ with 
		$$\Omega\cap B_R(x_0)\subset V_R(x_0)\subset \Omega \cap B_{11R/10}(x_0),$$ 
		and a function $h\in C(\overline{V_R(x_0)})\cap W^{1,2}(X)$ such that 
		$$  {\bf \Delta}h=-1 \ \ {\rm on}\ \ V_R(x_0) $$
		in the sense of distributions, and 
		$$h|_{\partial V_R(x_0)}\gs |w|,\qquad h|_{\partial\Omega\cap B_{R/2}(x_0)}=|w|,\qquad h|_{\partial V_R(x_0)\cap \Omega}= \max_{\overline V_R(x_0)}|w|.$$
\end{lemma}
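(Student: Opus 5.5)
The plan is to build $V_R(x_0)$ from the good neighborhood of Lemma \ref{lem:good-neighborhood}, and then obtain $h$ as the solution of a Poisson problem on it with suitable Dirichlet data. Regularity up to the boundary will come from Corollary \ref{lem:Wiener-criterion}.

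\textbf{Domain.} Let $\widetilde B_R$ be the domain of Lemma \ref{lem:good-neighborhood}, so $B_R(x_0)\subset \widetilde B_R\subset B_{11R/10}(x_0)$ and $\partial\widetilde B_R$ has an exterior ball condition with radius at least $R/20$. Set $V_R(x_0):=\Omega\cap \widetilde B_R$; if needed, take the connected component meeting $B_R(x_0)\cap\Omega$ (or simply allow an open set). The required inclusions are then immediate.

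\textbf{Wiener criterion on $\partial V_R$.} An exterior ball gives a uniform exterior density bound, since $\meas(B_r(z))$ is comparable to $\meas(B_r(x))$ by doubling. As in the proof of Corollary \ref{lem:Holder-continuous-for-torsion}, this yields a uniform lower bound on $\lambda_x(r)$ at small scales for $x\in\partial\widetilde B_R$, so $\partial\widetilde B_R$ satisfies the Wiener criterion. By the remark preceding the lemma, $\partial V_R(x_0)$ then satisfies it as well.

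\textbf{Boundary datum.} Let $M:=\max_{\overline V_R}|w|$ and choose a cutoff $\eta\in Lip_0(B_{R}(x_0))$ with $0\ls\eta\ls1$ and $\eta=1$ on $B_{R/2}(x_0)$. Define
$$g:=\eta|w|+(1-\eta)M.$$
Then $g\in W^{1,2}(X)\cap C(\overline V_R)$, because $|w|\in W^{1,2}(X)$ and the constant $M$ can be handled on the bounded domain by multiplying it by a Lipschitz cutoff equal to $1$ on $B_{2R}(x_0)$. This $g$ has the following properties:
\begin{itemize}
\item[(i)] $g\gs|w|$ everywhere on $\overline V_R$;
\item[(ii)] $g=|w|$ on $B_{R/2}(x_0)$;
\item[(iii)] $g=M$ outside $B_R(x_0)$, and in particular on $\partial V_R\cap\Omega\subset\partial\widetilde B_R$, which lies outside $B_R(x_0)$.
\end{itemize}

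\textbf{Poisson problem.} Let $h_0\in W^{1,2}_0(V_R)$ solve ${\bf\Delta}h_0=-1$ weakly. It exists by Lax--Milgram, using coercivity from the Poincaré/Sobolev inequality on the bounded domain, since $\mathrm{diam}<\mathrm{diam}(X)$. Let $H$ be the solution of ${\bf \Delta}H=0$ with $H-g\in W_0^{1,2}(V_R)$. Set $h:=H+h_0$; then ${\bf\Delta}h=-1$ and $h-g\in W^{1,2}_0(V_R)$. Corollary \ref{lem:Wiener-criterion} with $f=-1\in L^\infty$ gives $h\in C(\overline V_R)$ with $h=g$ on $\partial V_R$. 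Combined with (i)--(iii), this yields $h|_{\partial V_R}\gs|w|$, $h=|w|$ on $\partial\Omega\cap B_{R/2}(x_0)$, and $h=M$ on $\partial V_R\cap\Omega$. Finally, extend $h$ by $g$ outside $V_R$ (more precisely, $h-g\in W^{1,2}_0$ extended by zero) so that $h\in W^{1,2}(X)$.

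The main obstacle is the boundary identification. The equality $h=g$ pointwise on $\partial V_R$ must be derived from the Sobolev boundary condition together with the Wiener criterion, and this is exactly what the oscillation estimate of Lemma \ref{lem:boj-boundary-estimate} provides, so it must be invoked carefully at every point of $\partial V_R$. The other delicate point is checking that the Wiener criterion survives the intersection at points of $\partial\Omega\cap\partial\widetilde B_R$.
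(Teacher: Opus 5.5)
Your proposal is correct and follows the paper's proof. The paper likewise takes $V_R(x_0)=\Omega\cap\widetilde B_R$ and solves ${\bf\Delta}h=-1$ with boundary datum $\max\{w_1,|w|\}$, where $w_1$ vanishes on $B_{R/2}(x_0)$ and equals $\max|w|$ on $\partial V_R\cap\Omega$, getting continuity from Corollary~\ref{lem:Wiener-criterion}; your datum $\eta|w|+(1-\eta)M$ plays the same role and is manifestly in $W^{1,2}$, and your exterior-density check that $\partial\widetilde B_R$ satisfies the Wiener criterion makes explicit a step the paper leaves implicit.
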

\begin{proof} Let $\widetilde B_R$ be given in Lemma \ref{lem:good-neighborhood}, and set 
$$V_R(x_0):=\Omega\cap \widetilde B_R.$$
Let $w_1(x)$ be a continous function on $\overline{V_R(x_0)}$ such that $w_1=0$ on $\Omega\cap B_{R/2}(x_0)$ and $w_1=\max_{\overline{V_R(x_0)}}|w|$ on $\partial V_R(x_0)\cap\Omega$. Such a continuous function exists because 
$$d\big(B_{R/2}(x_0)\cap\Omega, \partial V_R(x_0)\cap \Omega\big)\gs d\big(B_{R/2}(x_0), \partial \widetilde B_R\big) \gs R/2.$$
	We define a function 
	$$\overline w(x):=\max\big\{ w_1(x),\ |w(x)|\big\},\qquad \forall x\in \overline{V_R(x_0)}.$$
It is clear that  $w\in C(\overline{V_R(x_0)})$, $\overline w(x)\gs |w(x)|$ for all  $x\in \overline{V_R(x_0)}$, $\overline w(x)= |w(x)|$ for all  $x\in \partial\Omega\cap B_{R/2}(x_0)$, 
and $\overline w(x)=\max_{\overline{V_R(x_0)}}|w|$ for all $x\in \partial V_R(x_0)\cap \Omega.$
 Now, the solution of the Dirichlet problem 
	$${\bf\Delta} h=-1\ \ \ {\rm on}\ \ V_R(x_0),\quad h-\overline w\in W^{1,2}_0(V_R(x_0))$$
	meets all of the assertions, by using Corollary \ref{lem:Wiener-criterion} and $V_R(x_0)\supset B_R(x_0)\cap \Omega$. The continuity of $h$ on $\overline{V_R(x_0)}$ is ensured by Corollary \ref{lem:Wiener-criterion}.
		\end{proof}

By using the same argument and replacing the boundary regularity of the torsion function $h$ (replacing Corollary \ref{lem:Wiener-criterion} by Corollary \ref{lem:Holder-continuous-for-torsion}), we obtain the following H\"older continuous barriers.

\begin{lemma}
	\label{lem:barrier-elliptic-2}
		Let $\Omega\subset X$ be a bounded open domain satisfying a uniform exterior density condition, and let $w\in W^{1,2}(X)\cap C^\alpha(\overline\Omega)$, for some $\alpha\in(0,1)$. Then for any $x_0\in\partial\Omega$ and  $R\in(0,{\rm diam}(\Omega)/100)$,
		 there exist a domain $V_R(x_0)$ with 
		$\Omega_R(x_0)\subset V_R(x_0)\subset \Omega_{11R/10}(x_0),$ 
		and a function $h\in C^\gamma(\overline{V_R(x_0)})\cap W^{1,2}(X)$ for some $\gamma\in(0,\alpha)$ such that 
		$$  {\bf \Delta}h=-1 \ \ {\rm on}\ \ V_R(x_0) $$
		in the sense of distributions, and 
		$$h|_{\partial V_R(x_0)}\gs |w|,\qquad h|_{\partial\Omega\cap B_{R/2}(x_0)}=|w|,\qquad h|_{\partial V_R(x_0)\cap \Omega}= \max_{\overline V_R(x_0)}|w|.$$
	
\end{lemma}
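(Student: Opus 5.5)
The plan is to repeat the construction in the proof of Lemma \ref{lem:barrier-elliptic} almost verbatim, with one change: at the step where continuity up to the boundary came from Corollary \ref{lem:Wiener-criterion}, H\"older continuity will come from Corollary \ref{lem:Holder-continuous-for-torsion}. Fix $x_0\in\partial\Omega$ and $R\in(0,{\rm diam}(\Omega)/100)$. Let $\widetilde B_R$ be the domain from Lemma \ref{lem:good-neighborhood}, and set $V_R(x_0):=\Omega\cap\widetilde B_R$. Since $B_R(x_0)\subset\widetilde B_R\subset B_{11R/10}(x_0)$, the required inclusions $\Omega_R(x_0)\subset V_R(x_0)\subset\Omega_{11R/10}(x_0)$ hold.

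The first step is to check that $V_R(x_0)$ satisfies a uniform exterior density condition. At points of $\partial V_R(x_0)\cap\partial\Omega$ this is inherited from $\Omega$, because $X\setminus V_R(x_0)\supset X\setminus\Omega$. At points of $\partial\widetilde B_R$, the exterior ball of radius $r\ls R/20$ from Lemma \ref{lem:good-neighborhood} lies in $B_{2r}(x)\setminus V_R(x_0)$. By volume doubling its measure is $\gs C_D^{-1}2^{-N}\meas(B_{2r}(x))$, so the complement density is bounded below there too. (This is the remark before Lemma \ref{lem:barrier-elliptic}: the density condition passes to intersections.) The resulting density radius is $\min\{R_{\rm den},R/20\}$. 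This is where I expect the only real care to be needed: the H\"older exponent $\gamma$ and the constants may depend on $R$ through this radius, which is acceptable since $R$ is fixed.

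The second step is the boundary datum. I will take a Lipschitz cutoff $w_1$ on $\overline{V_R(x_0)}$, equal to $0$ on $B_{R/2}(x_0)$ and to $M:=\max_{\overline{V_R(x_0)}}|w|$ on $\partial\widetilde B_R$, for instance
\[
w_1=M\min\{1,\ (2/R)\,d(\cdot,B_{R/2}(x_0))\}.
\]
This is possible because $d(B_{R/2}(x_0),\partial\widetilde B_R)\gs R/2$. Then set $\overline w:=\max\{w_1,|w|\}$. Since $|w|\in C^\alpha$ and $w_1$ is Lipschitz on a bounded set, $\overline w\in C^\alpha(\overline{V_R(x_0)})$, and $\overline w\in W^{1,2}$ because $w,w_1\in W^{1,2}$. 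Moreover $\overline w\gs|w|$ everywhere, $\overline w=|w|$ on $\partial\Omega\cap B_{R/2}(x_0)$, and $\overline w=M$ on $\partial V_R(x_0)\cap\Omega$.

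The final step is to solve ${\bf\Delta}h=-1$ on $V_R(x_0)$ with $h-\overline w\in W^{1,2}_0(V_R(x_0))$ by minimizing $\int|\nabla h|^2-2h$, and extend $h$ by $\overline w$ outside so that $h\in W^{1,2}(X)$. The right-hand side $f\equiv-1$ lies in every $L^q$, so Corollary \ref{lem:Holder-continuous-for-torsion} applied with $U=V_R(x_0)$ gives $h\in C^\gamma(\overline{V_R(x_0)})$ for some $\gamma\in(0,\alpha)$. In particular $h$ is continuous up to the boundary, so on $\partial V_R(x_0)$ it agrees with $\overline w$. This yields the three boundary relations in the statement.
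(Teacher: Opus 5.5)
Your proposal is correct and follows the paper's route: the paper proves this lemma by rerunning the proof of Lemma \ref{lem:barrier-elliptic} on $V_R(x_0)=\Omega\cap\widetilde B_R$, with Corollary \ref{lem:Holder-continuous-for-torsion} in place of Corollary \ref{lem:Wiener-criterion}. Your explicit check that $V_R(x_0)$ inherits a uniform exterior density condition fills in what the paper leaves to its remark on intersections: at $\partial\Omega$ it comes from $\Omega$, and at $\partial\widetilde B_R$ it comes from the exterior balls plus doubling, where the constant is really $C_D^{-1}3^{-N}$ since $B_{2r}(x)\subset B_{3r}(z)$, which is immaterial.
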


By the comparison principle and the barriers above, we can get the following $L^\infty$ self-improvement property for the solutions to the heat equation near the boundary.

\begin{theorem}
	\label{thm:comparison-heat}
	Let $\Omega\subset X$ be a bounded open domain satisfying the Wiener criterion,   and let $v(x,t)\in  L^2_{\rm loc}\big((0,+\infty); W^{1,2}(\Omega)\big)$ be a sub-solution of the heat equation. $v\gs0$ and is continuous in $\Omega\times(0,+\infty)$. Suppose that $v(x,t)\ls M$ on $\Omega\times(0,+\infty)$.  Let $w\in W^{1,2}(X)\cap C(\overline\Omega)$  and suppose   $\big(v(x,t)-w(x)\big)^+\in W^{1,2}_0(\Omega)$ for almost all $t\in(0,+\infty)$. Take any $x_0\in \partial\Omega$ and any $R\in(0,{\rm diam}(\Omega)/100)$. Let $V_R\subset \Omega$ and the torsion function $h$ be given in Lemma \ref{lem:barrier-elliptic} (or Lemma \ref{lem:barrier-elliptic-2}). 
			 		Then
			  	$$v(x, T)\ls \left(\frac{ M  }{\max_{\overline{V_R(x_0)}}|w|}+1\right)h(x), \qquad \forall x\in  B_R(x_0)\cap \Omega,\quad \forall T> 2\max_{\overline{V_R(x_0)}}|w|.$$
	\end{theorem}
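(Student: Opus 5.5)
Write $m:=\max_{\overline{V_R(x_0)}}|w|$ and $A:=M/m+1$; we may assume $m>0$. The idea is to compare $v$ with a supersolution of the heat equation on $V_R(x_0)\times(0,T)$ built from the torsion function $h$ of Lemma \ref{lem:barrier-elliptic} (or Lemma \ref{lem:barrier-elliptic-2}). Since $h$ is constant in time, it is a supersolution by itself: ${\bf\Delta}h=-1\ls0$ on $V_R(x_0)$. However, $h$ alone does not control $v$ near $t=0$, so we add a decaying time term. We take
$$\Phi(x,t):=A\,h(x)+\max\{M-ct,0\}$$
with a suitable $c>0$, or more cleanly the affine version $\Phi(x,t)=A h(x)+M-ct$. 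The point is that $\partial_t\Phi-{\bf\Delta}\Phi=-c+A$, which is $\gs0$ provided $c\ls A$. So we take $c=A=M/m+1$. Then $\Phi$ is a supersolution on $V_R(x_0)\times(0,\infty)$, and $\Phi(x,0)=Ah+M\gs M\gs v$, which gives the initial condition of Lemma \ref{BRHMHF-lem-3.2}.

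Next we check the lateral boundary. On $\partial V_R(x_0)\cap\Omega$ we have $h=m$, so for $t\ls M/c$,
$$\Phi\gs Am+M-ct\gs M+m\gs v.$$
On $\partial\Omega\cap\partial V_R(x_0)$ we have $h\gs|w|$ and $v\ls w$ in the $W^{1,2}_0$ sense, so $\Phi\gs A|w|+(M-ct)\gs w$ as long as $M-ct\gs0$. The difficulty is making this rigorous, i.e.\ showing $(v-\Phi)_+\in W^{1,2}_0(V_R(x_0))$ for a.e.\ $t$. For this we write
$$v-\Phi\ls (v-w)+(w-Ah)+(\text{nonpositive constant}).$$
The term $(v-w)^+$ lies in $W^{1,2}_0(\Omega)$. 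The term $(w-Ah)^+$ lies in $W^{1,2}_0(V_R)$ near $\partial\Omega$, since $h-\overline w\in W^{1,2}_0(V_R)$ with $\overline w\gs|w|$. Near $\partial V_R\cap\Omega$, continuity of $v$ and $h$ together with the strict inequality there gives the localization, using a cutoff argument and the lattice property of $W^{1,2}_0$. We expect this Sobolev-boundary bookkeeping to be the main technical obstacle; the rest is algebra.

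It remains to reach time $T$. To handle arbitrary times we cannot keep $M-ct$ negative without losing the initial comparison, so we apply the comparison on shifted windows $[T-t_0,T]$ with $t_0=M/c$. On such a window we use $\Phi_s(x,t)=Ah(x)+M-c(t-s)$ with $s=T-t_0$, and we need $s\gs0$. The hypothesis $T>2m$ is meant to guarantee that $t_0\ls T$: indeed $M/c=Mm/(M+m)\ls m<T$. The window comparison uses $v(\cdot,s)\ls M$ as initial data; the initial condition in Lemma \ref{BRHMHF-lem-3.2} holds by continuity of $v$. Lemma \ref{BRHMHF-lem-3.2} then yields $v\ls\Phi_s$ a.e.\ on the window. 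At $t=T$ we get $\Phi_s(x,T)=Ah(x)$, which is exactly the claimed bound
$$v(x,T)\ls\Big(\frac{M}{m}+1\Big)h(x),$$
and by continuity it holds for every $x\in B_R(x_0)\cap\Omega\subset V_R(x_0)$.

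Finally we double-check the lateral condition throughout the window. On $\partial V_R\cap\Omega$ we have $\Phi_s\gs Am\gs M+m>v$ for all $t\ls T$. On $\partial\Omega$ we have $\Phi_s\gs A|w|\gs |w|$ since $M-c(t-s)\gs0$ on the window. So the comparison holds on the whole window and the estimate follows.
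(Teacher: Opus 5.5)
Your proposal is correct and is essentially the paper's argument: both compare $v$, via Lemma \ref{BRHMHF-lem-3.2} on a time window ending at $T$, with $A$ times ($h$ plus a nonnegative time term that vanishes at $t=T$). Both check the lateral condition using $h\gs|w|$ and $h=m$ on $\partial V_R\cap\Omega$, check the initial condition using $v\ls M$, and evaluate at $t=T$ by continuity. The only difference is the time profile. The paper uses the quadratic barrier $(M+m)\big(h/m+(T-t)^2/(4m^2)\big)$, a supersolution on $(T-2m,T)$. Your affine $\Phi_s=A(h+T-t)$ is an exact solution and needs only a window of length $Mm/(M+m)<m$, so it even works for smaller $T$.
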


\begin{proof} If $\max_{\overline{V_R(x_0)}}|w|=0$, we have done. So we can assume $\max_{\overline{V_R(x_0)}}|w|>0$. We put $V_R:=V_R(x_0)$ and $L_R:=\max_{\overline{V_R(x_0)}}|w|$ in the following proof. Fix any $T>2L_R$ and define the function
\begin{equation}
\label{equ:barrier-parabolic}\tilde h(x,t):=\frac{h(x)}{L_R}+ \frac{(T-t)^2}{(2L_R)^2}.
\end{equation}
It is a super-solution of the heat equation on $V_R\times (T-2L_R, T)$.	 Indeed, we have that $\tilde h\in W^{1,2}\big(V_R\times (0,T)\big)$ and that for each $t>0$,
	$${\bf \Delta}\tilde h(x,t)=\frac{{\bf \Delta}h}{L_R}= - \frac{\meas}{L_R}	$$ on $V_R$ in the sense of distributions. For each $t\in(T-2L_R,T)$,
	$$\frac{\partial}{\partial t}\tilde h(x,t)= \frac{2(t-T)}{(2L_R)^2}\gs \frac{2(T-2L_R-T)}{(2L_R)^2}=-\frac{1}{L_R}.$$
	It follows for each $t\in(T-2L_R,T)$ that ${\bf \Delta}\tilde h\ls\frac{\partial}{\partial t}\tilde h\cdot \meas$ in the sense of distributions. Therefore, it is a super-solution of the heat equation (see, for example,  \cite[Lemma 6.12]{ZZ18}).

		We first claim that 
		$$v\ls  (M+L_R)\tilde h  \quad {\rm on}\ \  \big(\partial V_{R}  \times (T-2L_R,T)\big)\cup \big(V_R\times\{T-2L_R\}\big)$$  in the sense of Lemma \ref{BRHMHF-lem-3.2}.
		 On $\partial V_R\times (T-2L_R,T)$, we have 
	 $$(M+L_R)\tilde h \gs\frac{(M+L_R)h}{L_R} \gs h\gs|w|\gs   v,$$
	 	 	On $V_R\times(T-2L_R,T-2L_R+\varepsilon)$   for any small $\varepsilon>0$, we have
$$\tilde h(x,t)= \frac{h(x)}{ L_R}+ \frac{(t-T)^2}{(2L_R)^2} \gs 1-\varepsilon/L_R$$
 By the assumption $v\ls M $ on $\Omega\times (0,+\infty)$, we get
	$$(v-a\tilde h)_+\ls a \left(1-(1-\varepsilon/L_R\right),\quad {\rm where}\quad a:=M+L_R,$$
	 which implies the initial condition
	$$\frac 1 \varepsilon\int_{T-2L_R}^{T-2L_R+\varepsilon}\int_{V_R}(v-a\tilde h)_+^2{\rm d}\meas{\rm d}t\to 0\quad {\rm as}\  \ \varepsilon\to0^+.$$  Now we finish the proof of the claim that $v\ls a\tilde h$ near the parabolic boundary of $V_R\times (T-2L_R,T)$ in the sense of Lemma \ref{BRHMHF-lem-3.2}.
	
	By Lemma \ref{BRHMHF-lem-3.2}, we conclude that $v\ls a\tilde h$ almost all in $V_R\times (T-2  L_R,T)$. Since both $v$ and $\tilde h$ are continuous on $U\times(0,T]$, we have
	$$v(x,T)\ls a\tilde h(x, T) =\frac{ah(x)}{L_R} \quad {\rm on} \ \ V_R.$$
 This finishes the proof, since $\Omega\cap B_R(x_0)\subset V_R.$
\end{proof}

As a consequence, we get global continuity for the solutions of the heat equations.
\begin{corollary}
 	\label{cor:global-continuity-heat}
	Let $\Omega\subset X$ satisfy the Wiener criterion,  and let $v(x,t)\in  L^2_{\rm loc}\big((0,+\infty); W^{1,2}(\Omega)\big)$ be a  solution of heat equation with a boundary data $w\in W^{1,2}(X)\cap C(\overline\Omega)$. Then $v(x,t)$ is continuous in $\overline\Omega\times(0,+\infty)$. 	In particular, the Dirichlet heat kernel $p_t^\Omega(x,\cdot)\in C(\overline\Omega)$ for any $x\in\Omega$ and any $t\in(0,+\infty).$	
		\end{corollary}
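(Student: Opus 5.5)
The plan is to reduce the statement to Theorem \ref{thm:comparison-heat} applied to the positive and negative parts of $v-w$, combined with the interior continuity of solutions of the heat equation. Interior continuity (in fact interior Hölder continuity) of $v$ in $\Omega\times(0,+\infty)$ is standard on $RCD(K,N)$ spaces, by the parabolic Harnack inequality (Sturm), so the whole issue is continuity at points $(x_0,t_0)$ with $x_0\in\partial\Omega$ and $t_0>0$. I will show that $v(x,t)\to w(x_0)$ as $(x,t)\to(x_0,t_0)$ with $x\in\Omega$. I will assume $v$ is bounded on $\Omega\times[t_0/2,\infty)$; this holds, for instance, when the initial data is bounded, by the maximum principle (Lemma \ref{BRHMHF-lem-3.2}) together with $\sup|w|<\infty$.

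\textbf{Step 1: normalization.} Fix $\varepsilon>0$ and pick $\rho>0$ with $|w(x)-w(x_0)|<\varepsilon$ on $\overline\Omega\cap B_{2\rho}(x_0)$. Since constants solve the heat equation, I set
$$v_1:=\big(v-w(x_0)-\varepsilon\big)^+ .$$
This is a nonnegative continuous sub-solution, as the positive part of a solution is a sub-solution. It is bounded by some $M$, and $(v_1-w_1)^+\in W^{1,2}_0(\Omega)$, where $w_1:=(w-w(x_0)-\varepsilon)^+$. Here $w_1$ is continuous and vanishes on $\overline\Omega\cap B_{2\rho}(x_0)$.

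\textbf{Step 2: applying the barrier.} Applying Theorem \ref{thm:comparison-heat} to $v_1$ directly needs the time window $T>2\max|w_1|$, which is not small. I therefore rescale the data first: replace $w_1$ by $\delta w_1$. The lateral condition still holds, because $v_1$ vanishes near $\partial\Omega$ wherever $w_1$ does. Where $w_1>0$, however, I cannot shrink the data. So instead I redo the proof of Theorem \ref{thm:comparison-heat} with the parabolic barrier
$$\tilde h(x,t)=\frac{h(x)}{L}+\frac{(t-t_1)^2}{L'^2},$$
with the time scale chosen freely as $L'\ll t_0$. The torsion $h$ of Lemma \ref{lem:barrier-elliptic}, taken on the domain $V_R(x_0)$ with $R<\rho$, has boundary values $0$ on $\partial\Omega\cap B_{R/2}(x_0)$ and $1$ on $\partial V_R\cap\Omega$, and is continuous on $\overline{V_R}$ by Corollary \ref{lem:Wiener-criterion}.

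Scaling $h$ by $1/L'$ turns ${\bf\Delta}h=-1$ into ${\bf\Delta}=-1/L'$, which dominates the time derivative $\ge -2/L'$ of the quadratic term. Adjusting constants, for instance by using $\tilde h = h/L' + (t-t_1)^2/L'^2+ A\,\phi$ with $\phi$ harmonic-like and equal to $1$ on $\partial V_R\cap\Omega$, I get a supersolution on $V_R\times(t_0-L',t_0+L')$. This supersolution is at least $1$ on the parabolic boundary and tends to $0$ as $(x,t)\to(x_0,t_0)$. The decay to $0$ uses two facts: $h(x)\to h(x_0)=0$ by boundary continuity, and the time term vanishes at $t=t_0$. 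Lemma \ref{BRHMHF-lem-3.2} then gives $v_1\le M\tilde h$ near $(x_0,t_0)$, hence $\limsup v\le w(x_0)+\varepsilon$.

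\textbf{Step 3: lower bound and conclusion.} The symmetric argument with $(w(x_0)-\varepsilon-v)^+$ gives the lower bound. Letting $\varepsilon\to0$ proves continuity up to the boundary.

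\textbf{Heat kernel.} For the final assertion, $p^\Omega_t(x,\cdot)$ equals $\int_\Omega p^\Omega_{t/2}(x,z)\,p^\Omega_{t/2}(z,\cdot)\,{\rm d}\meas(z)$. This is $H^\Omega_{t/2}$ applied to an $L^2$ function, so it is a solution of the heat equation with zero boundary data $w=0$ on $[t/2,\infty)$, and it is bounded by the Gaussian upper bound. The previous argument, started at time $t/2$, then applies.

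\textbf{Main obstacle.} The delicate step is Step 2. Theorem \ref{thm:comparison-heat} as stated ties the time window to $\max|w|$, so it must be re-run with an independent time scale, which requires handling the constant mismatch when dividing $h$ by $L'$. I would first try the simplest fix: multiply $h$ by a large constant $\lambda$ so that ${\bf\Delta}(\lambda h)=-\lambda\le \partial_t$ of the time term. The barrier is then $\lambda h+(t-t_0)^2/L'^2$, with $\lambda\ge 2/L'$. This still tends to $0$ at $(x_0,t_0)$ and is at least $1$ on the lateral boundary $\partial V_R\cap\Omega$, since $\lambda\max h\ge\lambda\ge1$ there.
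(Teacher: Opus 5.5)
Your proposal is correct and follows the paper's route: interior continuity from Sturm, then at a boundary point a parabolic comparison (Lemma \ref{BRHMHF-lem-3.2}) of the normalized positive and negative parts of $v$ against the barrier formed by the torsion function of Lemma \ref{lem:barrier-elliptic} plus a quadratic term in time, as in Theorem \ref{thm:comparison-heat}. Your re-run of that theorem with $\lambda h+(t-t_0)^2/L'^2$ is valid but avoidable: the maximum in Theorem \ref{thm:comparison-heat} is taken only over $\overline{V_R(x_0)}$. Your $\varepsilon$-shift forces $\max_{\overline{V_R(x_0)}}|w_1|=0$, which makes the theorem's constant degenerate. With the paper's normalization $w(x_0)=0$ instead, this maximum tends to $0$ as $R\to0$, and the theorem applies directly once $R$ is small enough that $t_0$ exceeds twice it.
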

\begin{proof}
	It is well-known that $v$ is continuous in the interior of $\Omega\times(0,+\infty)$ (see \cite{Stu95}). 
	
	For each $(x_0,T)\in \partial\Omega\times(0,+\infty)$, without loss of generality, we can assume $w(x_0)=0$. Fix a number $R>0$ so small that $T>2\max_{\Omega_{11R/10}(x_0)}|w|$. 
Applying Theorem \ref{thm:comparison-heat} to $v^+$ and $v^-$ ensures the continuity of $v(\cdot,T)$ at $x_0$. 
The proof is finished.
\end{proof}

The same is the case for the boundary $\partial\Omega$ having an exterior density condition and the boundary data being H\"older continuous.
\begin{corollary}
 	\label{cor:global-holder-heat-2}
		Let $\Omega\subset X$ satisfy a uniform exterior density condition, and let $v(x,t)\in  L^2_{\rm loc}\big((0,+\infty); W^{1,2}(\Omega)\big)$ be a  solution of heat equation with a boundary data $w\in W^{1,2}(X)\cap C^\alpha(\overline\Omega)$ for some $\alpha\in(0,1)$. Then, for any $0<t_1<t_2<+\infty$,  $v(x,t)\in C^\gamma(\overline\Omega\times[t_1,t_2])$, for some $\gamma\in(0,\alpha),$ depending on $[t_1,t_2]$. 
		\end{corollary}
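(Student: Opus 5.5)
The plan is to combine three ingredients: interior Hölder regularity of caloric functions, the barrier estimate of Theorem \ref{thm:comparison-heat} built from the Hölder torsion functions of Lemma \ref{lem:barrier-elliptic-2}, and interior-in-time smoothing to bound $v$. Fix $0<t_1<t_2$.

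\emph{Step 1: boundedness.} First I would show that $v$ is bounded on $\Omega\times[t_1/2,t_2]$. Since $v-w\in W^{1,2}_0(\Omega)$, the function $v-w$ solves the heat equation with the source term $\mathbf\Delta w$. An $L^2\to L^\infty$ estimate, or equivalently comparing $v$ with $H^\Omega_t$ applied to the initial data plus the maximum of $|w|$, gives a bound $M$ on this set. The bound depends on $t_1$.

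\emph{Step 2: a boundary Hölder modulus.} Fix $x_0\in\partial\Omega$ and $T\in[t_1,t_2]$. Replace $w$ by $w-w(x_0)$ and $v$ by $v-w(x_0)$; this is harmless because constants solve the heat equation. Then $\max_{\overline{V_R(x_0)}}|w|\ls [w]_\alpha (11R/10)^\alpha$.
\begin{itemize}
\item Apply Theorem \ref{thm:comparison-heat} to $v^+$ and $v^-$, which are nonnegative, continuous subsolutions bounded by $M$.
\item Translate time so that the hypothesis $T>2\max|w|$ holds. This needs $R^\alpha\lesssim t_1$, so $R$ must be small in terms of $t_1$.
\item The result is $|v(x,T)|\ls (M/L_R+1)h_R(x)$ on $B_R(x_0)\cap\Omega$, where $h_R$ is the torsion function on $V_R$ and $L_R=\max_{\overline{V_R(x_0)}}|w|$.
\end{itemize}

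The factor $M/L_R$ is bad, since $L_R\to 0$ as $R\to 0$. I would first check whether the comparison can be rerun with the barrier $\tilde h=h/L_R+(T-t)^2/(2L_R)^2$ on a time interval of length comparable to $R^2$. A safer route uses the quantitative oscillation estimate directly:
\begin{itemize}
\item Lemma \ref{lem:boj-boundary-estimate}, via the uniform lower bound $\lambda_{x_0}\gs c$ from the proof of Corollary \ref{lem:Holder-continuous-for-torsion}, gives $h_R(x)\ls |w(x_0)|+C(d(x,x_0)/R)^{\gamma_0}\max|w|+CR^{2}$.
\item It also gives the scaling $h_R\lesssim R^{2}+R^\alpha$ on $B_{R}\cap\Omega$.
\end{itemize}
Optimizing over $R$ between $d(x,x_0)^{\theta}$ and a fixed $R_0$ should yield $|v(x,T)-w(x_0)|\ls C\,d(x,x_0)^{\gamma}$ for some $\gamma<\alpha$.

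\emph{Step 3: spatial and temporal Hölder continuity.} Combine the boundary modulus with interior Hölder estimates for caloric functions (parabolic Harnack inequality on RCD spaces \cite{Stu95}) on parabolic cylinders of radius $r=d(x,\partial\Omega)/2$. The standard case split gives spatial Hölder continuity:
\begin{itemize}
\item if $d(x,y)\ls r/2$, use the interior estimate, with the oscillation over the cylinder controlled by $Cr^\gamma$ from Step 2;
\item otherwise, pass through the nearest boundary points of $x$ and $y$ and use Step 2.
\end{itemize}
Time regularity up to the boundary follows the same way. The barrier bound holds uniformly in $T\in[t_1,t_2]$, so $|v(x,t)-v(x,s)|\ls 2C d(x,\partial\Omega)^\gamma$. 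The interior estimate handles the case $|t-s|\ls r^2$; when $|t-s|>r^2$, this boundary bound gives $\ls C|t-s|^{\gamma/2}$. The resulting Hölder exponent $\gamma$ depends on $[t_1,t_2]$ through $M$ and the admissible radius $R_0$.

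\emph{Main obstacle.} I expect the main difficulty to be making Step 2 quantitative in $R$. The factor $M/L_R$ from Theorem \ref{thm:comparison-heat} degenerates as $R\to 0$, so it must be compensated by the smallness of $h_R$ near $x_0$. That requires tracking the oscillation decay of Lemma \ref{lem:boj-boundary-estimate} iterated over dyadic scales, rather than merely invoking the qualitative Corollary \ref{lem:Holder-continuous-for-torsion}.
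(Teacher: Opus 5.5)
Your architecture is the paper's, whose proof simply repeats Corollary \ref{cor:global-continuity-heat} with Lemma \ref{lem:barrier-elliptic-2} in place of Lemma \ref{lem:barrier-elliptic}. Your Steps 1 and 3 supply details the paper leaves implicit: the bound on $v$ needed in Theorem \ref{thm:comparison-heat} (only $v\ls M$ on $V_R\times[T-2L_R,T]$ is used), and the passage from a boundary modulus plus interior estimates to global space--time H\"older continuity.

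\textbf{The gap is in Step 2.} The route you commit to does not close. You bound $h_R(x)\ls C(d(x,x_0)/R)^{\gamma_0}\max|w|+CR^2$ with a non-decaying $CR^2$, and this forces you to shrink $R$. After multiplying by the prefactor, you then get a term $(M/L_R)\,CR^2$. But $L_R=\max_{\overline{V_R(x_0)}}|w-w(x_0)|$ has no lower bound: it can be far smaller than $R^2$, or even $0$ if $w$ is locally constant near $x_0$. So the optimization over $R$ fails.

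\textbf{Missing idea: you never need $R\to 0$.} Keep $R$ fixed, small only in terms of $t_1$ and $[w]_\alpha$ so that $T>2L_R$; this is what the paper does. Since $h=|w-w(x_0)|$ on $\partial\Omega\cap B_{R/2}(x_0)$, you have $h(x_0)=0$. H\"older continuity of $h$ on $\overline{V_R(x_0)}$ then gives
\[
|v(x,T)-w(x_0)|\ls (M/L_R+1)\,[h]_{C^\gamma}\,d(x,x_0)^\gamma
\]
directly. Your non-decaying $CR^2$ is an artifact of a crude bound. In the iteration of Lemma \ref{lem:boj-boundary-estimate}, the source term enters at scale $\rho$ as $C\rho^{2-N/q}$, with $q$ as large as you like, so it decays as $\rho\to0$.

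\textbf{Uniformity in $x_0$.} What genuinely needs care, and what the paper glosses over, is that the constant must not depend on $x_0$: both $L_R(x_0)$ and $[h]_{C^\gamma}$ depend on $x_0$. Fix this as follows:
\begin{itemize}
\item Replace $L_R$ by a fixed level $L$, e.g.\ $L=t_1/4$ with $R$ chosen so that $L_R\ls L$. Do this both in the boundary value of $\overline w$ on $\partial V_R\cap\Omega$ and in the barrier $\tilde h$. The proof of Theorem \ref{thm:comparison-heat} runs verbatim and gives $|v(x,T)-w(x_0)|\ls (M/L+1)h(x)$, using $v\ls M$ only on $\Omega\times[t_1/2,t_2]$.
\item Iterate Lemma \ref{lem:boj-boundary-estimate} at this fixed $R$, using $\lambda_{x_0}\gs c$ (since $B_\rho(x_0)\setminus V_R\supset B_\rho(x_0)\setminus\Omega$) and the maximum-principle bound on $\sup h$. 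This gives $h(x)\ls C\,d(x,x_0)^\gamma$ with $C$ independent of $x_0$.
\end{itemize}
This is the dyadic iteration you name as the main obstacle, but used at a fixed outer scale $R$, not while shrinking $R$.
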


\begin{proof}
	The proof is the same as the above, by replacing Lemma \ref{lem:barrier-elliptic} with Lemma \ref{lem:barrier-elliptic-2}.
\end{proof}

\begin{remark}
\begin{itemize}
\item[(1)] The H\"older index $\gamma$ in Lemma \ref{lem:Holder-continuous-for-torsion}, and hence in Lemma \ref{lem:barrier-elliptic-2} and Corollary \ref{cor:global-holder-heat-2} is not optimal. In the next section, we will give a near-optimal estimate for the H\"older index under the assumption that the boundary $\partial\Omega$ has an exterior ball condition.
		\item[(2)] All of the results hold for a geodesic metric measure space $(X,d,\meas)$ supporting a locally doubling measure $\meas$ and a locally $L^2$-Poincar\'e inequality. Here, the assumption that $X$ is a geodesic space has been used in Lemma \ref{lem:good-neighborhood}.
\end{itemize}
\end{remark}

\section{Optimal boundary H\"older continuity}\label{sect4}

  We fix a bounded open subset  $\Omega\subset X$  with $\meas(\partial\Omega)=0$ and ${\rm diam}(\Omega)<{\rm diam}(X)$, in  an $RCD(K,N)$ space $(X,d,\meas)$ with $K\ls0$ and $N\in(1,+\infty)$. 
In this section, we will use $c_1, c_2, C_1, \cdots$ to denote  generic  positive constants depending only on $K$, $N$, $ {\rm diam}(\Omega), \meas(\Omega)$ and $R_{\rm ext}$ in Definition \ref{BRHMHF-def-1.5}; they may differ from line to line.

The following lemma is well known in the Euclidean space $\mathbb R^n$ (see \cite[IV 7.3]{CourantHilbert}, and also \cite{Saf}).

\begin{lemma}
	\label{lem:conrant-hilbert}
	Let $\Omega$ satisfy the exterior ball condition at $x_0\in \partial\Omega$ with a radius $R_{\rm ext}$. Let $v(x)\in C(\overline\Omega)\cap W^{1,2}(\Omega)$ solve 
	$${\bf \Delta}v\gs-1\quad {\rm on}\ \ \Omega$$
	in the sense of distributions. Suppose that $v(x)\gs0$ on $\Omega$  and  $v(x)=0$ on $\partial\Omega$. Then there exists  aconstant $R_1$, depending only on $N,K, {\rm diam}(\Omega)$ and $R_{\rm ext}$, such that for each $R\in(0,R_1)$, 
	$$v(x)\ls  C_1  d(x,x_0)  \cdot \left(\frac{M(R)}{R}+\sqrt{M(R)}\right),\quad\forall\ x\in B_{R}(x_0)\cap \Omega,$$
	where $M(R):= \sup_{B_R(x_0)\cap \Omega}v(x),$ and the constant $C_1$ depends only on $N,K$.
	\end{lemma}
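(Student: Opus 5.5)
The approach is a barrier argument built from the exterior ball. Let $y_0$ be the center of the exterior ball at $x_0$, so that $d(y_0,x_0)=R_{\rm ext}$ and $\overline{B_{R_{\rm ext}}(y_0)}\cap\Omega=\emptyset$, and put $\rho(x):=d(x,y_0)$. Then $\rho\gs R_{\rm ext}$ on $\Omega$ and $\rho(x_0)=R_{\rm ext}$.

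\emph{Step 1 (barrier).} Laplacian comparison on $RCD(K,N)$ spaces gives, in the sense of distributions on $X\setminus\{y_0\}$,
$$\mathbf\Delta\rho\ls (N-1)\sqrt{-K}\coth\big(\sqrt{-K/(N-1)}\,\rho\big)\cdot\meas .$$
On the annulus $A_R:=\{R_{\rm ext}<\rho<R_{\rm ext}+2R\}$ this quantity is bounded by a constant $C$ depending on $N,K,R_{\rm ext}$. I would take a radial barrier
$$\phi(x)=g(\rho(x)),\qquad g(s)=a\big(1-e^{-\lambda(s-R_{\rm ext})}\big),\quad g'>0,\ g''<0.$$
By the chain rule for the measure-valued Laplacian, together with $|\nabla\rho|=1$,
$$\mathbf\Delta\phi\ls \big(g''+g'\,C\big)\meas .$$
Choosing $\lambda$ large, depending on $C$ and hence only on $N,K$ once $R\ls R_1$, makes this $\ls -\tfrac{a\lambda^2}{2}e^{-2\lambda R}\meas$.

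\emph{Step 2 (scaling and comparison).} Set $\lambda=c/R$ with $c$ fixed large. Then $g''+Cg'\ls -c_2\,a/R^2$ on $A_R$, which uses $R<R_1$ so that $C\ls c/(2R)$. Choose
$$a=C_3\big(M(R)+R^2\big).$$
With this choice $\mathbf\Delta\phi\ls -1$ on $\Omega\cap A_R$, and $\phi\gs M(R)$ on $\partial B_R(x_0)\cap\Omega$. For the latter we need $\rho-R_{\rm ext}$ to be bounded below by a multiple of $R$ there; this is handled by working instead on the set $\{\rho<R_{\rm ext}+R/2\}\cap\Omega$, which lies inside $B_{R}(x_0)$ only after a small adjustment. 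More precisely, I would compare on $D:=\Omega\cap B_{R}(x_0)$. On $\partial D\cap\partial\Omega$ we have $v=0\ls\phi$. On $\partial B_R(x_0)\cap\Omega$ we use $\phi\gs\min\{a(1-e^{-c\delta}),\dots\}$ after replacing $\phi$ by $\phi+M(R)\,d^2(x,x_0)/R^2$-type corrections.

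This boundary piece is the main obstacle. Points of $\partial B_R(x_0)\cap\Omega$ can lie close to the exterior sphere, where $\rho-R_{\rm ext}$ is small. The fix I would try first is to add the term $\tfrac{M(R)}{R^2}\,\eta\big(d(x,x_0)\big)$, where $\eta$ is a Laplacian-comparison-controlled function vanishing to second order at $x_0$ and equal to $1$ at distance $R$. Its Laplacian is $\ls C M(R)/R^2$, which is absorbed by enlarging $a$. I would also replace $R$ by a fixed fraction of $R$ so that the comparison closes.

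\emph{Step 3 (conclusion).} The elliptic weak maximum principle, applied to the subsolution $v-\phi$ with $\mathbf\Delta(v-\phi)\gs0$ and $(v-\phi)^+\in W^{1,2}_0(D)$, gives $v\ls\phi$ on $D$. Since $\rho(x)-R_{\rm ext}\ls d(x,x_0)$ and $g(s)\ls a\lambda(s-R_{\rm ext})$, we get
$$v(x)\ls C\,\frac{M(R)+R^2}{R}\,d(x,x_0)+\frac{M(R)}{R^2}\eta .$$
If $\eta$ is chosen with $\eta\ls d(x,x_0)/R$, this becomes $C\,d(x,x_0)\big(M(R)/R+R\big)$. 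Finally, the restriction $R\ls\sqrt{M(R)}$ (or else shrinking $R$ to $\sqrt{M(R)}$, using the monotonicity of $M$) converts the extra $R$ into $\sqrt{M(R)}$. That optimization in $R$ is what produces the two terms $M(R)/R+\sqrt{M(R)}$ in the statement.
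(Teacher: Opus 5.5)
Your argument is correct in substance, but it takes a different route from the paper.

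\textbf{The paper's route.} The paper shrinks the exterior ball to one of radius $R$ tangent at $x_0$. It uses the single barrier $h=A\big(R^{-m}-d_{y_0}^{-m}\big)$ with $A=2R^mM(R)$ and an $M$-dependent exponent $m=m_{N,K}+R/\sqrt{M(R)}$. It compares on $\Omega\cap B_{2R}(y_0)$ and reads off $v\le h\le 2mM(R)\,d(x,x_0)/R$. The $\sqrt{M(R)}$ term comes straight from the choice of $m$, with no case analysis.

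\textbf{Your route.} You keep the fixed exterior ball and use an exponential barrier in $\rho$ at the fixed scale $\lambda R=c$. You add a quadratic correction centred at $x_0$ for the lateral boundary and compare exactly on $D=\Omega\cap B_R(x_0)$. This gives $v\le C\,d(x,x_0)\big(M(R)/R+R\big)$, and a case split then trades $R$ for $\sqrt{M(R)}$.

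\textbf{What each buys.} The paper's version is shorter. Yours has $\partial D\cap\Omega\subset\{d(\cdot,x_0)=R\}$, so $M(R)$ genuinely bounds $v$ there, and the constants are uniform because $\lambda R$ is fixed. In the paper, $\partial B_{2R}(y_0)\cap\Omega$ reaches distance $3R$ from $x_0$. Also, the lower bound for $\mathbf\Delta d_{y_0}^{-m}$ is really of order $m^2d_{y_0}^{-m-2}$, which near $d_{y_0}=2R$ loses a factor $2^{-m-2}$ against $m^2R^{-m-2}$. That loss is significant exactly when $m$ is large.

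\textbf{First point to tighten.} The correction must be exactly $M(R)\,d^2(x,x_0)/R^2$, as you wrote once in Step 2. It should not be $\frac{M(R)}{R^2}\eta$ with $\eta=1$ at distance $R$: that term has Laplacian of order $M(R)/R^4$ and contributes $M(R)d(x,x_0)/R^3$ at the end. With $M(R)d_{x_0}^2/R^2$ everything closes directly on $D$:
\begin{itemize}
\item Laplacian comparison gives $\mathbf\Delta\big(M(R)d_{x_0}^2/R^2\big)\le C(N,K)M(R)R^{-2}\meas$ on $B_R(x_0)$ for $R\le 1$, which $a=C_3(M(R)+R^2)$ absorbs.
\item On $\partial D\cap\partial\Omega$ you have $v=0\le\phi$.
\item On $\partial D\cap\Omega$ you have $\phi\ge M(R)\ge v$, using continuity of $v$ on $\overline D$.
\item Inside $D$ the correction is at most $M(R)d(x,x_0)/R$.
\end{itemize}
So drop the detour through $\{\rho<R_{\rm ext}+R/2\}$, which is not contained in $B_R(x_0)$, and drop the rescaling of $R$.

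\textbf{Second point to tighten.} In the final step, if $\sqrt{M(R)}<R$, applying the estimate at $r=\sqrt{M(R)}$ with monotonicity of $M$ only covers $x\in B_r(x_0)$. For $r\le d(x,x_0)<R$ you also need the trivial bound $v(x)\le M(R)\le\sqrt{M(R)}\,d(x,x_0)$.

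\textbf{Where $R_{\rm ext}$ enters.} Choose $R_1\le 1$ so that the bound $C(N,K,R_{\rm ext})$ for $\mathbf\Delta\rho$ on $\{\rho\ge R_{\rm ext}\}$ satisfies $C\le c/(2R_1)$. Then $C_1$ depends only on $N,K$, as the statement requires.
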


\begin{proof}
  Let $R_1:= \min\{R_{\rm ext},1\}/8.$  For any $R<R_1$, the exterior ball condition gives  a ball $B_R(y_0)$ such that 
$$B_R(y_0)\cap \Omega=\emptyset,\quad  d(y_0,x_0)=R.$$ According the Laplacian comparison theorem on $RCD(K,N)$ space, there exists a number $m_{N,K}>0$ such that 
$${\bf \Delta}d^{-m}_{y_0}\gs \frac{m^2}{R^{m+2}}\cdot \meas,\quad {\rm on}\ X\setminus\{y_0\}$$
for any $m\gs m_{N,K}$ and any $R<1$, in the sense of distributions (see \cite{Gig15} and \cite[Lemma 2.5]{ZZ24}).
Take $$m:= m_{N,K}+\frac{R}{\sqrt{M(R)}}.$$
Here we have assumed $M(R)>0$; otherwise, we are done.

We consider the function 
$$h(x):=A\big(R^{-m}-d_{y_0}^{-m}(x)\big),\quad {\rm where}\quad A:=2R^m\cdot M(R),$$
on $X\setminus\{y_0\}$. Then we have 
$${\bf \Delta}h\ls -A\frac{m^2}{2R^{m+2}}=- \frac{m^2\cdot M(R)}{R^2} \ls -1 $$
on $B_{3R}(y_0)\setminus B_{R/2}(y_0),$ in the sense of distributions, since $m^2 \gs R^2/M(R)$. The exterior ball condition implies 
$h\gs 0=v$ on $\partial \Omega$. A direction computation gives  
$$h\gs A(R^{-m}-(2R)^{-m})\gs AR^{-m}/2=M(R)\gs v\ \ {\rm on}\ \ \partial B_{2R}(y_0)\cap \Omega.$$
Therefore, $h\gs v$ on the boundary of $\Omega\cap B_{2R}(y_0)$. Combining with 
$${\bf \Delta}v\gs -1,\quad {\bf \Delta}h\ls -1,$$
on $\Omega\cap B_{2R}(y_0)$ in the sense of distributions. The maximum principle implies
$$v\ls h,\quad {\rm on}\ \Omega\cap B_{2R}(y_0),$$
which implies the desired estimate, since $h(x_0)=0$,
$$|\nabla h|\ls mAR^{-(m+1)}\ls 2mM(R)/R=2 \left(m_{N,K}\frac{M(R)}{R}+ \sqrt{M(R)}\right) $$
on $\Omega\cap B_{2R}(y_0),$ and $\Omega\cap B_R(x_0) \subset\Omega\cap B_{2R}(y_0).$
\end{proof}

\begin{remark} In the previous section, we essentially only used the assumption that the space $(X,d,\meas)$ supports a locally doubling measure $\meas$ and an $L^2$-Poincar\'e inequality. However, in this section we will work in the $RCD$ setting, since the Laplacian comparison theorem was used in Lemma \ref{lem:conrant-hilbert}.
	\end{remark}

The following regularity of the torsion function will be used in barriers.

\begin{lemma}\label{lem:optimal-regularity-for-torsion}
Let $U\subset \Omega$ satisfy a uniformly exterior ball condition, and   $w\in Lip(\overline U)$. Suppose that  $u\in W^{1,2}(U)$ satifies 
$${\bf\Delta}u=-1,\quad u-w\in W_0^{1,2}(U)$$
 in the sense of distributions. Then $u \in C^{1-\epsilon}(\overline U)$ for any $\epsilon>0.$ 
 \end{lemma}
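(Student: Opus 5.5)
We split the claim into a boundary estimate and an interior estimate, and combine them in the usual way. Since $U$ satisfies a uniform exterior ball condition, it satisfies a uniform exterior density condition, because $\meas(B_r(x_0)\setminus U)\gs \meas(B_{r/2}(z))$ for a suitable point $z$ on the segment from $x_0$ to the exterior center, combined with volume doubling. So Corollary \ref{lem:Holder-continuous-for-torsion} already gives $u\in C(\overline U)\cap C^{\gamma}(\overline U)$, and in particular $u$ is bounded, say $|u|\ls M_0$. The remaining task is to improve the exponent to $1-\epsilon$.

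\textbf{Boundary step.} Fix $x_0\in\partial U$. We reduce to zero boundary values by subtracting a linear-in-distance barrier. Set $L:=Lip(w)$ and consider
\[
v^{\pm}:=\pm\big(u-w(x_0)\big)-L\,d(\cdot,x_0).
\]
The difficulty is that ${\bf\Delta}d_{x_0}$ is only bounded above, by the Laplacian comparison on $RCD(K,N)$. Instead, we run the barrier argument of Lemma \ref{lem:conrant-hilbert} directly, with $h=A(R^{-m}-d_{y_0}^{-m})$ and an added term $w(x_0)+L\,d(\cdot,x_0)$ on the boundary. Equivalently, we compare $u-w(x_0)$ with $L\,d(\cdot,x_0)+h$ on $U\cap B_{2R}(y_0)$. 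On $\partial U$ we have $|u-w(x_0)|=|w-w(x_0)|\ls L\,d(\cdot,x_0)$, and on $\partial B_{2R}(y_0)\cap U$ the function $h$ dominates $2M_0$. To absorb ${\bf\Delta}(L\,d_{x_0})\ls C L(N/d_{x_0}+1)$, which is not integrable away from the region near $x_0$, we instead use $L\,d_{y_0}-LR$. This vanishes to first order exactly where needed: it is Lipschitz, it is $\gs L(d(\cdot,x_0))-2LR$ on the relevant region, and ${\bf\Delta}d_{y_0}\ls C/R$ on $B_{3R}(y_0)\setminus B_{R/2}(y_0)$, which is absorbed by choosing $m$ larger, with $m^2\gs R^2(1+CL/R)/M$.

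The outcome is an estimate of the form
\[
|u(x)-w(x_0)|\ls C\big(L+M_0/R+\sqrt{M_0}\big)\,d(x,x_0)+CLR
\]
for $x\in U\cap B_R(x_0)$. The additive error $CLR$ is the source of the $\epsilon$ loss. Iterating over dyadic radii $R_k=2^{-k}R_1$ replaces $M_0$ by the oscillation on $B_{R_k}$. Writing $M_k:={\rm osc}_{U\cap B_{R_k}}(u-w(x_0))$, this gives $M_{k+1}\ls \theta M_k + C R_k$ with $\theta<1$ free to be pushed toward $1/2$ by choosing the comparison ball scale. Solving the recursion yields $M_k\ls C_\epsilon R_k^{1-\epsilon}$. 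I expect this step to be the main obstacle: arranging the barrier so that the contraction factor approaches $2^{-1}$ (hence exponent $1-\epsilon$) while keeping the Laplacian comparison errors of lower order. Concretely, I would take $m$ large, of order $1/\epsilon$, so that $d_{y_0}^{-m}$ rescaled over $[R,2R]$ is almost linear near $d_{y_0}=R$ with a controlled constant.

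\textbf{Interior and combination.} In the interior, ${\bf\Delta}u=-1$ with bounded right-hand side gives local Lipschitz bounds on $RCD(K,N)$ spaces via the gradient estimates of Cheng–Yau type. After rescaling, for $B_{2r}(x)\subset U$ this reads
\[
|\nabla u|\ls C\big(r^{-1}{\rm osc}_{B_{2r}(x)}u+r\big).
\]
For $x,y\in U$, let $r=d(x,\partial U)$. If $d(x,y)\gs r/4$, we use the boundary estimate at nearest boundary points of $x$ and of $y$ together with the Lipschitz continuity of $w$. If $d(x,y)<r/4$, the interior gradient bound combined with the boundary estimate ${\rm osc}_{B_{r/2}(x)}u\ls C r^{1-\epsilon}$ gives
\[
|u(x)-u(y)|\ls C r^{-\epsilon}d(x,y)\ls C d(x,y)^{1-\epsilon}.
\]
Together these cases give $u\in C^{1-\epsilon}(\overline U)$.
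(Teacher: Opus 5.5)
Your route is genuinely different from the paper's. The paper splits $u=u_1+u_2$, with $u_1$ harmonic with data $w$ and $u_2$ the torsion function with zero data. Lemma \ref{lem:conrant-hilbert} then gives $u_2(x)\ls c\,d(x,x_0)\max u_2$ at every boundary point, and $u_1\in C^{1-\epsilon}(\overline U)$ is simply quoted from \cite[Theorem 1.1]{ZZ24}. So in the paper the actual source of the exponent $1-\epsilon$ is outsourced.

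You instead feed the Lipschitz data directly into the barrier through $L(d_{y_0}-R)$, and then iterate. The triangle inequality gives $L(d_{y_0}+R)\gs L\,d_{x_0}$, which is where your additive $2LR$ comes from. This is self-contained, using only the Laplacian comparison, the maximum principle and the interior Lipschitz estimate for the Poisson equation. It essentially reproves the cited harmonic result, and your nearest-point combination makes explicit a step the paper leaves implicit.

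Your barrier estimate is sound. Keep $m$ fixed and enlarge $A$ to absorb ${\bf\Delta}(L\,d_{y_0})\ls CL/R$ on the annulus. Using $R^{-m}-s^{-m}\ls mR^{-m-1}(s-R)$ and $d_{y_0}(x)-R\ls d(x,x_0)$, one gets for $\rho\ls R\ls R_1$
\begin{equation*}
S(\rho)\ls 2LR+C\Big(L+\frac{S(3R)}{R}+1\Big)\rho,\qquad S(r):=\sup_{U\cap B_r(x_0)}|u-w(x_0)|,
\end{equation*}
with $C$ depending only on $N,K$ (and $m$).

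The weak point is exactly the step you flagged, and your concrete plan for it would not work. At a fixed dyadic ratio the contraction factor is $C/2$, where $C$ is the fixed barrier constant, and this need not be $<1$. Taking $m$ of order $1/\epsilon$ goes the wrong way. Normalize $h=M$ on $\{d_{y_0}=2R\}$; then the slope of $A(R^{-m}-d_{y_0}^{-m})$ at $d_{y_0}=R$ is $mM/\big((1-2^{-m})R\big)$, which grows linearly in $m$. Large $m$ makes the profile steeper near $d_{y_0}=R$, not nearly linear, so the contraction gets worse.

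The fix is the one hinted at in your phrase ``choosing the comparison ball scale'': keep $m$ fixed and take the evaluation radius to be a small fraction of the barrier radius. With $\rho=\delta R$ the display gives $S(\delta R)\ls C\delta\,S(3R)+C'R$. The factor $C\delta$ is \emph{linear} in $\delta$, so for any $\epsilon$ you can pick $\delta$ with $C\delta\ls\frac12(\delta/3)^{1-\epsilon}$. Along $r_{k+1}=\delta r_k/3$, induction then yields $S(r_k)\ls A\,r_k^{1-\epsilon}$. This also shows why the method stops short of exponent $1$: the additive $C'R$ coming from merely Lipschitz data is only dominated by a sublinear power.

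A minor point: the obstruction with $L\,d_{x_0}$ is not that ${\bf\Delta}d_{x_0}$ is ``only bounded above''. An upper bound is exactly what a supersolution needs. The problem is that the bound $\sim L/d_{x_0}$ blows up \emph{at} $x_0$, and moving the singularity to $y_0\notin\overline U$ is what fixes it.

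With the iteration corrected as above, your argument proves the lemma.
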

 \begin{proof}
Let $u_1$ be the harmonic function on $U$ with boundary data $w$, and let $u_2$ be the torsion function with zero boundary data, namely, $u_2$ is the solution of 
$${\bf \Delta}u_2=-1,\quad u|_{\partial\Omega}=0,$$
 in the sense of distributions. The maximum principle implies $u_2\gs0$. Lemma \ref{lem:Wiener-criterion} states $u_2\in C(\overline U)$, in particular, it is bounded on $\overline U$. By using Lemma \ref{lem:conrant-hilbert}, we have 
 \begin{equation}\label{equ:optimal-regularity-torsion-1}
 	|u_2(x)-u_2(x_0)|\ls c_1\cdot d(x,x_0) \cdot \max_{\overline U}u_2
 	 \end{equation}
 for all $x\in U$ and $x_0\in \partial U$, where the constant $c_1$ depends on $N,K,{\rm diam}(\Omega)$ and $ R_{\rm ext}$.   
 
 On the other hand, from \cite[Theorem 1.1]{ZZ24}, we get that $u_1\in C^{1-\epsilon}(\overline U)$ for any $\epsilon>0$. Combining this with (\ref{equ:optimal-regularity-torsion-1}), we conclude that 
 $u\in C^{1-\epsilon}(\overline U).$
The proof is finished.
 \end{proof}

With the help of this regularity, we can get the following optimal boundary H\"older regularity of the heat equations, in the case where the boundary $\partial\Omega$ has an exterior ball condition.\begin{proposition}
	\label{prop:global-holder-heat-3}
		Let $\Omega\subset X$ satisfy a uniform exterior ball condition, and let $v(x,t)\in  L^2_{\rm loc}\big((0,+\infty); W^{1,2}(\Omega)\big)$ be a  solution of heat equation with a boundary data $w\in W^{1,2}(X)\cap Lip(\overline\Omega)$. Then, for any $0<t_1<t_2<+\infty$,  $v(x,t)\in C^{1-\epsilon}(\overline\Omega\times[t_1,t_2])$, for any $\epsilon>0$.  		
\end{proposition}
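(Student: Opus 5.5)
The plan is to mimic the proofs of Corollaries \ref{cor:global-continuity-heat} and \ref{cor:global-holder-heat-2}: run the parabolic barrier argument of Theorem \ref{thm:comparison-heat}, but feed in the sharper torsion barrier from Lemma \ref{lem:optimal-regularity-for-torsion}. Interior regularity is standard (parabolic Harnack / H\"older, and in fact local Lipschitz in space for caloric functions on $RCD(K,N)$). So the issue is the modulus of continuity at points $(x_0,T)\in\partial\Omega\times[t_1,t_2]$, together with how it combines with the interior estimates.

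\textbf{Step 1 (barriers with the optimal exponent).} Fix $x_0\in\partial\Omega$. After subtracting the constant $w(x_0)$ (which keeps $v-w(x_0)$ a solution), assume $w(x_0)=0$, so $|w|\ls L_w d(\cdot,x_0)$. Take $R$ small and let $V_R(x_0)=\Omega\cap\widetilde B_R$ as in Lemma \ref{lem:barrier-elliptic}. By Lemma \ref{lem:good-neighborhood} and the remark on intersections, $V_R$ satisfies a uniform exterior ball condition with radius $\gs\min\{R_{\rm ext},R/20\}$. Build $\overline w=\max\{w_1,|w|\}$ with $w_1$ a Lipschitz cutoff (slope $\sim L_R/R$, where $L_R=\max|w|\ls CL_wR$); then $\overline w$ is Lipschitz with constant $\ls CL_w$. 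Solve ${\bf\Delta}h=-1$ with $h-\overline w\in W^{1,2}_0(V_R)$. Lemma \ref{lem:optimal-regularity-for-torsion} gives $h\in C^{1-\epsilon}(\overline{V_R})$, and since $h(x_0)=\overline w(x_0)=0$, we get $h(x)\ls C_\epsilon d(x,x_0)^{1-\epsilon}$.

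\textbf{Step 2 (comparison).} Apply Theorem \ref{thm:comparison-heat} to $v^+$ and $v^-$; these are nonnegative subsolutions, bounded by $M$ via the maximum principle on $[t_1/2,\infty)$. For $T>2L_R$ this gives
\[
|v(x,T)|\ls \Big(\frac{M}{L_R}+1\Big)h(x).
\]
The factor $M/L_R$ is dangerous because $L_R\to0$. The fix is to fix $R=R_0$, depending only on $t_1$, with $2L_{R_0}<t_1$, and use the single barrier $h=h_{R_0}$. Its $C^{1-\epsilon}$ constant depends only on $R_0$, $L_w$ and the data, so $|v(x,T)-v(x_0,T)|\ls C\,d(x,x_0)^{1-\epsilon}$ uniformly for $x_0\in\partial\Omega$ and $T\in[t_1,t_2]$.

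\textbf{Step 3 (assembling a global estimate).} Combine the boundary estimate with interior estimates in the usual way. For $x,y\in\Omega$ with $d(x,y)<\frac12 d(x,\partial\Omega)=:\frac12\delta$, apply the interior gradient/H\"older estimate to $v-v(x_0,T)$ on a parabolic cylinder of size $\delta$. Its oscillation there is $\ls C\delta^{1-\epsilon}$ by Step 2, so the interior estimate gives $|v(x,T)-v(y,T)|\ls C\delta^{-\epsilon}d(x,y)\ls Cd(x,y)^{1-\epsilon}$. Otherwise, pass through nearest boundary points and use Step 2 with the triangle inequality.

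For time regularity, compare $v(\cdot,T+s)$ with $v(\cdot,T)$. This can be done with the same barrier, since the time-dependent part of $\tilde h$ in \eqref{equ:barrier-parabolic} contributes a Lipschitz-in-$t$ term. Alternatively, use interior parabolic scaling on cylinders of size $\delta$, where $|t-s|\ls\delta^2$, with the boundary bound controlling the oscillation. Either way this yields H\"older continuity in $t$ with exponent $(1-\epsilon)/2$ in the parabolic metric, which is what $C^{1-\epsilon}$ on $\overline\Omega\times[t_1,t_2]$ means here.

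The main obstacle is Step 2/3 bookkeeping. The barrier must be chosen at a fixed scale, so that the constant $M/L_R$ does not blow up while the $C^{1-\epsilon}$ bound of $h$ still holds uniformly in $x_0$. The interior estimate must also scale correctly: this needs the local Lipschitz estimate for caloric functions on $RCD(K,N)$ spaces and uniform exterior ball radii for every $V_{R_0}(x_0)$.
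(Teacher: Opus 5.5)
Your strategy is the paper's. You build the barrier on $V_R(x_0)=\Omega\cap\widetilde B_R$ from the torsion problem, take its $C^{1-\epsilon}$ regularity from Lemma~\ref{lem:optimal-regularity-for-torsion}, and apply Theorem~\ref{thm:comparison-heat} to $v^\pm$ as in Corollary~\ref{cor:global-continuity-heat}. Your Step~3 (interior gradient estimates on cylinders of size $\delta(x)$, passing through nearest boundary points, and the parabolic time exponent) supplies bookkeeping the paper leaves implicit. It is sound, provided Step~2 holds with a constant independent of $x_0$.

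That uniformity is where your argument fails as written.

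\textbf{The gap.} Fixing $R=R_0$ only makes $L_{R_0}(x_0)=\max_{\overline{V_{R_0}(x_0)}}|w-w(x_0)|$ uniformly \emph{small}. It gives no positive lower bound, and $L_{R_0}(x_0)$ can be arbitrarily small or even zero, e.g.\ when $w$ is locally constant near $x_0$. The $C^{1-\epsilon}$ bound of $h$ does not shrink with $L_{R_0}$, because the torsion part of $h$ does not depend on the data. So the constant $(M/L_{R_0}(x_0)+1)C_\epsilon$ is not uniform in $x_0$. Step~3 then yields only a boundary H\"older estimate whose constant depends on $x_0$, not membership in $C^{1-\epsilon}(\overline\Omega\times[t_1,t_2])$. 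The paper's sketch glosses over the same point; its remark ``if $\max|w|=0$, we have done'' is vacuous.

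\textbf{The repair.} Decouple the barrier level from $L_R$.
\begin{itemize}
\item Fix $\Lambda>0$ with $\Lambda\gs\frac{11}{10}L_wR_0$ and $4\Lambda\ls t_1$, shrinking $R_0$ if necessary.
\item Set $w_1=\Lambda$ on $\partial V_{R_0}\cap\Omega$, and use $\tilde h=h/\Lambda+(T-t)^2/(2\Lambda)^2$ on $(T-2\Lambda,T)$.
\item The proof of Theorem~\ref{thm:comparison-heat} then goes through verbatim. On $\partial V_{R_0}\cap\Omega$ use $(M+\Lambda)h/\Lambda=M+\Lambda\gs v$; at the initial time $\tilde h\gs 1$.
\end{itemize}
This gives $|v(x,T)-w(x_0)|\ls (M/\Lambda+1)C_\epsilon\, d(x,x_0)^{1-\epsilon}$ with a constant independent of $x_0$.

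The condition $4\Lambda\ls t_1$ also fixes a small time-window slip. Your bound $|v|\ls M$ is only available on $[t_1/2,\infty)$, so you need $T-2\Lambda\gs t_1/2$ for all $T\gs t_1$, and $2L_{R_0}<t_1$ does not ensure this.

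Finally, that $M$-bound does not follow from the maximum principle alone, since no initial datum is given. It comes from an $L^2\to L^\infty$ estimate, for instance $v(\cdot,t)=h_w+H^\Omega_{t-s}\big(v(\cdot,s)-h_w\big)$ with $h_w$ the harmonic extension of $w$, together with ultracontractivity of $H^\Omega$.
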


\begin{proof}
With the help of this regularity of the torsion function in Lemma \ref{lem:optimal-regularity-for-torsion}, combining with a good local neighborhood in Lemma \ref{lem:good-neighborhood}, we can using the same argument as in Lemma \ref{lem:barrier-elliptic} to construct a barrier function $h$  near a boundary point $x_0\in\partial\Omega$ on a local domain $V_R(x_0)$ such that $h$ is $C^{1-\epsilon}(\overline{V_R(x_0)})$. Finally, the assertion comes from Theorem \ref{thm:comparison-heat}, by a same argument in the proof of Corollary \ref{cor:global-continuity-heat}.
\end{proof}

\section{The Zugmund-type boundary estimates for subsolutions}\label{sect-5}
 Let $(X,d,\meas)$ be an $RCD(K,N)$ space  with $K\ls0$ and $N\in(1,+\infty)$. 
 In this section, we continue to fix a bounded open subset  $\Omega\subset X$  with $\meas(\partial\Omega)=0$ and ${\rm diam}(\Omega)< {\rm diam}(X)$. The constants $c_1, c_2, C_1, \cdots$ in this section will depend  only on $K$, $N$, $ {\rm diam}(\Omega), \meas(\Omega)$, $R_{\rm ext}$, and $C_0$ in Definition \ref{BRHMHF-def-1.5}; they may differ from line to line.

\subsection{Boundary gradient estimates for Dirichlet heat kernels}

Let $p^\Omega_t(x,y)$ be the Dirichlet heat kernel on $\Omega$. We first consider its behavior near the boundary.   
\begin{lemma}
	\label{BRHMHF-thm-1.10}
Let $\Omega$ be bounded and satisfy a uniform exterior ball condition with radius $R_{\rm ext}\in(0,{\rm diam}(\Omega)/100)$. Let $  T\gs R^2_{\rm ext}$.  Then  for any $\varepsilon\in(0,T)$ we have
	\begin{equation}\label{BRHMHF-equ-3.12}
			\int_\varepsilon^T|\nabla p^\Omega_t(x,y) |{\rm d}t  \ls  c'_T \cdot \delta(x)  \left(  \int^{{\rm diam}^2(\Omega)}_{d^2(x,y)} \frac{    {\rm d}t }{t\cdot \meas(B_{ \sqrt{t}}(x))}+1\right)
	\end{equation}	
 for any $x\in \Omega$ and  almost all  $y\in \Omega$ such that $d(y,\partial\Omega)\ls \sqrt{\varepsilon}$,
where the constant $ c_T'$ is independent of $\varepsilon$, (they depend only on $T, N,K,\Omega$ and $ R_{\rm ext}$). 
\end{lemma}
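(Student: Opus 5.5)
The plan is to derive a pointwise gradient bound on $p^\Omega_t(\cdot,y)$ by combining the two-sided boundary decay (\ref{BRHMHF-equ-1.6}) with a local Cheng–Yau / Caccioppoli-type gradient estimate for caloric functions, and then to integrate in $t$. I will fix $y$ with $\delta(y):=d(y,\partial\Omega)\ls\sqrt\varepsilon$ and regard $w(z,s):=p^\Omega_s(z,y)$ as a nonnegative solution of the heat equation in $\Omega\times(0,\infty)$ vanishing on $\partial\Omega$; by symmetry of the kernel it does not matter which variable carries the gradient. Here $|\nabla p^\Omega_t(x,y)|$ is the gradient in $x$, and the factor $\delta(x)$ in (\ref{BRHMHF-equ-3.12}) has to come from the boundary decay in $x$.

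\textbf{Step 1 (pointwise gradient bound).} Fix $x\in\Omega$, $t\in(\varepsilon,T)$, and put $r:=\min\{\delta(x),\sqrt t\}/2$. On the parabolic cylinder $B_r(x)\times(t-r^2,t]\subset\Omega\times(0,\infty)$, I will use the interior Lipschitz estimate for caloric functions on $RCD(K,N)$ spaces (Li–Yau/Hamilton-type, or Caccioppoli plus the $L^\infty$ bound on $|\nabla H_t f|$):
$$|\nabla w|(x,t)\ls \frac{C}{r}\sup_{B_r(x)\times(t-r^2,t]}w .$$
I then bound the supremum by (\ref{BRHMHF-equ-1.6}). Points $z\in B_r(x)$ satisfy $\delta(z)\ls 2\delta(x)$ and $d(z,y)\gs d(x,y)-r$, so I expect
$$\sup w\ls c_T\frac{\delta(x)\,\delta(y)}{t}\,\frac{1}{\meas(B_{\sqrt t}(y))}e^{-d^2(x,y)/(c t)}$$
after absorbing the shift $r\ls\sqrt t/2$ into the Gaussian. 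For the factor $\delta(x)/r$ I split into two cases. If $\delta(x)\ls\sqrt t$, then $r=\delta(x)/2$, and this factor is a constant that eats one $\delta(x)$. That leaves $\delta(y)\ls\sqrt\varepsilon\ls\sqrt t$, giving
$$|\nabla_x p|\ls C\,\frac{\sqrt t}{t\,\meas(B_{\sqrt t}(y))}\,e^{-d^2/ct}.$$
This is the wrong factor: I need $\delta(x)$, not $\delta(y)$. So I will instead put the boundary-decay burden on $x$ through a barrier. Treat $z\mapsto p^\Omega_t(z,y)$ near the boundary point nearest to $x$ with Lemma \ref{lem:conrant-hilbert}-type exterior-ball barriers applied to the parabolic problem: at time $t$, $w(\cdot,t)$ has Laplacian $\partial_t w$, which is bounded by $C\sup w/t$ via the time-derivative estimate. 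This should give a global Lipschitz bound $|\nabla_x w|(\cdot,t)\ls C\sup_{\Omega\times[t/2,t]} w\,(1/\sqrt t+\cdots)$, with a local version over a ball of size $\sim\sqrt t$. Combining the interior estimate with this boundary Lipschitz estimate, I aim for
$$|\nabla_x p^\Omega_t(x,y)|\ls C_T\,\frac{\delta(y)}{t\,\meas(B_{\sqrt t}(x))}e^{-d^2(x,y)/ct}+C_T\,\delta(x)\cdots .$$
Honestly, the symmetric choice is cleaner. Since the right side of (\ref{BRHMHF-equ-3.12}) uses $\meas(B_{\sqrt t}(x))$ and the integral starts at $d^2(x,y)$, I will apply the boundary decay in the non-differentiated variable and the gradient estimate in the differentiated one, and then swap roles using symmetry of $p^\Omega_t$. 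The target pointwise bound is
$$|\nabla p^\Omega_t(x,y)|\ls C_T\frac{\delta(x)}{t\,\meas(B_{\sqrt t}(x))}e^{-d^2(x,y)/(ct)},\qquad t\in(\varepsilon,T).$$
It should follow because the gradient estimate (scale $\sqrt t$, boundary Lipschitz via the barrier of Lemma \ref{lem:conrant-hilbert}) costs a factor $1/\sqrt t$, while the decay $\delta(y)/\sqrt t\ls1$, using $\delta(y)\ls\sqrt\varepsilon<\sqrt t$, is harmless. The remaining $\delta(x)/\sqrt t$ factor from (\ref{BRHMHF-equ-1.6}) is kept. Volume doubling converts $\meas(B_{\sqrt t}(y))$ into $\meas(B_{\sqrt t}(x))$ at the cost of $e^{Cd(x,y)/\sqrt t}$, which the Gaussian absorbs.

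\textbf{Step 2 (time integration).} Once the target bound holds, I split $\int_\varepsilon^T$ at $t=d^2(x,y)$. For $t\ls d^2(x,y)$, I use $e^{-d^2/ct}\meas(B_{\sqrt t}(x))^{-1}\ls C\meas(B_{d(x,y)}(x))^{-1}(d/\sqrt t)^{N}e^{-d^2/ct}$. Substituting $s=d^2/t$ makes $\int_0^{d^2}\frac{dt}{t}(d^2/t)^{N/2}e^{-d^2/ct}$ bounded, so this part is $\ls C\delta(x)/\meas(B_{d(x,y)}(x))$. That in turn is $\ls C\delta(x)\int_{d^2}^{4d^2}\frac{dt}{t\meas(B_{\sqrt t}(x))}$, provided $4d^2\ls{\rm diam}^2(\Omega)$; otherwise $\meas(B_d(x))\gs c$ and the term is absorbed into the "$+1$". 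For $d^2\ls t\ls\min\{T,{\rm diam}^2\}$, I drop the exponential and obtain exactly the integral on the right. For $t\in({\rm diam}^2(\Omega),T)$, $\meas(B_{\sqrt t}(x))\gs c$ and the contribution is $\ls C_T\delta(x)$, the "$+1$" term.

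\textbf{Main obstacle.} The boundary gradient estimate in Step 1 is the hard part: getting $|\nabla_x p|$ to be $O(\delta(x)/t)$ uniformly up to the boundary, rather than only $O(p/\delta(x))$. My first attempt will use (\ref{BRHMHF-equ-1.6}) to bound $\sup w$ and $|\partial_t w|$ on a ball of radius $\sim\sqrt t$ around $x$. Then, at the fixed time, I will apply the elliptic exterior-ball barrier of Lemma \ref{lem:conrant-hilbert} to $w(\cdot,t)$ with right side $\partial_tw$, scaled by $\sup|\partial_t w|$, to get linear decay $w\ls C\delta(x)\,(\text{sup}/\sqrt t)$. After that, an interior gradient estimate on $B_{\delta(x)/2}(x)$ finishes the step.
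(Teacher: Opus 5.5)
\textbf{The gap: which variable carries the gradient.} The estimate (\ref{BRHMHF-equ-3.12}) is not symmetric in $x$ and $y$. Your proposal never settles which variable is differentiated, and where it does commit, it picks the wrong one. The estimate holds only for the gradient in $y$, the variable constrained by $\delta(y)\ls\sqrt\varepsilon$. This is also how it is used in Theorem \ref{BRHMHF-thm-5.3}: there $|\nabla p^\Omega_t(x,\cdot)|$ is paired with $|\nabla\phi_k|$, which concentrates at $\partial\Omega$.

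Three of your claims are false:
\begin{itemize}
\item that ``by symmetry it does not matter which variable carries the gradient'';
\item that the gradient is in $x$, with $\delta(x)$ coming from decay in $x$;
\item your stated main obstacle, $|\nabla_x p^\Omega_t(x,y)|=O(\delta(x)/t)$ uniformly up to $\partial\Omega$.
\end{itemize}
A positive caloric function vanishing on $\partial\Omega$ has non-vanishing normal derivative (Hopf, e.g.\ for a Euclidean ball). For the half-line kernel, $\partial_xp_t(0,y)=(4\pi)^{-1/2}\,y\,t^{-3/2}e^{-y^2/4t}>0$. So fix $y$ and let $x\to\partial\Omega$: the left side of the $x$-version stays positive while its right side tends to $0$.

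The gradient in one variable inherits the boundary decay of the \emph{other} variable. Indeed, the barrier-plus-interior-estimate mechanism of your last paragraph, applied around $x$, yields $|\nabla_xp^\Omega_t(x,y)|\ls C\,\delta(y)\,t^{-1}\meas(B_{\sqrt t}(x))^{-1}e^{-d^2(x,y)/ct}$. That is exactly the ``wrong factor'' you found in Step 1, not a bound with $\delta(x)$.

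\textbf{The repair, and how it compares with the paper.} Differentiate in $y$ and use the decay in $y$ already contained in (\ref{BRHMHF-equ-1.6}); no barrier or bound on $\partial_tp$ is needed.
\begin{itemize}
\item Since $\delta(y)\ls\sqrt\varepsilon<\sqrt t$, the cylinder $B_{\delta(y)/2}(y)\times(t-\delta(y)^2/4,t]$ lies in $\Omega\times(3t/4,t]$.
\item Apply an interior Lipschitz estimate to $q\mapsto p^\Omega_s(x,q)$ on this cylinder.
\item Bound its supremum by (\ref{BRHMHF-equ-1.6}), using $\delta(q)\ls\frac32\delta(y)$.
\end{itemize}
This gives $|\nabla_yp^\Omega_t(x,y)|\ls C_T\,\delta(x)\,t^{-1}\meas(B_{\sqrt t}(y))^{-1}e^{-d^2(x,y)/ct}$, since the factor $\delta(y)$ cancels the $1/\delta(y)$ from the gradient estimate. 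Your Step 2 then finishes the proof.

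This corrected argument is a genuinely different route from the paper's. The paper applies the local Li--Yau inequality to $\ln p^\Omega_t(x,\cdot)$ on $B_R(y_0)$, $R=\delta(y_0)/2$. It then integrates in $t$ with Cauchy--Schwarz, so that $\partial_tp$ telescopes to $p_T-p_\varepsilon$; here $\delta(y)\ls\sqrt\varepsilon$ absorbs the $1/t$ term into $1/R^2$. This yields $\int_\varepsilon^T|\nabla p^\Omega_t(x,\cdot)|(y)\,{\rm d}t\ls C\big(R\,p^\Omega_T(x,y)+R^{-1}\int_\varepsilon^Tp^\Omega_t(x,y)\,{\rm d}t\big)$. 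The paper then uses the Green function bound $\int_0^\infty p^\Omega_t(x,y)\,{\rm d}t\ls c\,\delta(x)\delta(y)(\cdots)$ from [ZZ25, Theorem 3.9], together with (\ref{equ:upper-heat-1}).

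So the paper needs no pointwise gradient or time-derivative estimate, but uses the Green function estimate as a black box. Your route instead needs a local pointwise Lipschitz bound for nonnegative caloric functions on $RCD(K,N)$ spaces, but only the heat kernel bound (\ref{BRHMHF-equ-1.6}). It computes the Green-function-type time integral by hand and gives a pointwise-in-$t$ gradient bound as a by-product.
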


 \begin{proof}
  
   It suffices to show  that   (\ref{BRHMHF-equ-3.12})  holds almost eveywhere in  $ B_{\delta(y_0)/2}(y_0)$ for any  $y_0\in \Omega$. We set $R:=\delta(y_0)/2.$

Recall that $p^\Omega_t(x,\cdot)\in Lip_{\rm loc}(\Omega\times (0,T))$ (by \cite[Corollary 1.5]{HZ20}).	Letting $f(\cdot,t):=\ln p^\Omega_t(x,\cdot)$, by local Li-Yau's gradient estimate \cite[Theorem 1.4]{ZZ16}, we have   that
	$$|\nabla f|^2(y,t)-2\frac{\partial f}{\partial t}(y,t)\ls C_{K,N}\left(\frac{1}{R^2}+\frac{1}{t}+1\right)$$
	for almost all $y\in B_R(y_0)$ and almost all $t\in(0,\infty)$. Therefore,  for $t>\varepsilon,$
	$$\frac{|\nabla p^\Omega_t(x,\cdot)|^2(y)}{p^\Omega_t(x,y)}\ls 2\frac{\partial}{\partial t} p^\Omega_t(x,y)+C_{N,K} \left(\frac{1}{R^2}+\frac{1}{\varepsilon}+1\right)p^\Omega_t(x,y)$$	
	for almost all $y\in B_R(y_0)$. Integrating over $[\varepsilon,T]$ and by H\"older inequality, we get
	\begin{equation*}
		\begin{split}
			\left(\int_{\varepsilon}^{T}|\nabla p^\Omega_t(x,\cdot)|(y){\rm d}t\right)^2& \ls  \int_{\varepsilon}^{T}\frac{|\nabla p^\Omega_t(x,\cdot)|^2(y)}{p^\Omega_t(x,y)}{\rm d}t\cdot \int_{\varepsilon}^{T}  p^\Omega_t(x,y){\rm d}t\\
			&\ls \left[2\left(p^\Omega_{T} -p^\Omega_{\varepsilon} \right)+C_{N,K}\left(\frac{1}{R^2}+\frac{1}{\varepsilon}+1\right)\int_{\varepsilon}^{T}  p^\Omega_t(x,y){\rm d}t\right]\cdot \int_{\varepsilon}^{T}  p^\Omega_t(x,y){\rm d}t	\end{split}
	\end{equation*}
for almost all $y\in B_R(y_0)$. By  $p^\Omega_\varepsilon(x,y)\gs0$ and $\varepsilon\gs 4 R^2$ (since $2R=\delta(y_0)\ls \sqrt\varepsilon$), we get
		\begin{equation}\label{BRHMHF-equ-3.15}
		\begin{split}
			\int_{\varepsilon}^{T}|\nabla p^\Omega_t(x,\cdot)|(y){\rm d}t&\ls C_1 \left(R\cdot p^\Omega_{T} (x,y) +  \frac{1}{R } \int_{\varepsilon}^{T}  p^\Omega_t(x,y){\rm d}t\right)   		\end{split}
			\end{equation}	
for almost all $y\in B_R(y_0)$.

By \cite[Theorem 3.9]{ZZ25} and $\delta(y)\ls \delta(y_0)+d(y,y_0)\ls 2\delta(y_0)$, we have 
$$\frac 1 R\int_\varepsilon^Tp^\Omega_t(x,y){\rm d}t\ls \frac{2}{\delta(y_0)}\int_0^\infty p^\Omega_t(x,y){\rm d}t\ls 4 c_3 \cdot \delta(x)  \left(  \int^{D^2}_{d^2(x,y)} \frac{ {\rm d}t }{t\cdot \meas(B_{ \sqrt{t}}(x))}+1\right),$$
where $D:={\rm diam}(\Omega)$.
From (\ref{BRHMHF-equ-1.6}) and $\delta(y)\ls {\rm diam}(\Omega)$, we have
\begin{equation}
	\label{equ:upper-heat-1}
	p^\Omega_T(x,y)\ls \frac{\delta(x) }{T}\frac{c_T\cdot {\rm diam}(\Omega)}{\mu(B_{\sqrt T}(y))}\ls c_T'\cdot \delta(x).
\end{equation}
Substituting the above two inequalities into (\ref{BRHMHF-equ-3.15}), the desired estimate (\ref{BRHMHF-equ-3.12}) follows. The proof is finished.
\end{proof}
 
\begin{remark}
	It is also interesting to obtain a lower bound of $p^\Omega_T(x,y)$ (c.f. \cite{Stu95,Wang98,Zhang02}).  
\end{remark}

\subsection{The set of finite perimeter and one-sided Gauss-Green formula}
The theory of sets of finite perimeter and functions of bounded variation has been generalized to the setting of  $RCD$ spaces (\cite{ABS19, BPS23-jems, BG24}).
\begin{defn}\label{BRHMHF-def-2.7-BV}
A function $f\in L^1(X)$ is called  a function {\emph{of bounded variation}}, denoted by $f\in BV(X)$, if there exist $f_j\in Lip_{\rm loc}(X)$ converging to $f$ in $L^1(X)$ such that
$$\limsup_{j}\int_X|\nabla f_j|{\rm d}\meas<+\infty.$$
\end{defn}

Given a function $ f \in BV(X)$  and an open
set $A\subset X$, one can define
$$|Df|(A):= \inf\left\{ \liminf_{i\to+\infty}\int_A|\nabla f_i|{\rm d}\meas\ \big|\ f_i\in Lip_{\rm loc}(A).\quad f_i\to f\ \ {\rm in}\ \ L_{\rm loc}^1(A)\right\}.
$$

Indeed, (see \cite{ABS19}), the set function $A\mapsto  |Df|(A)$  is the restriction to open sets of a finite Borel measure, which is called the total variation of $f$ and denoted still by $|Df|$. For any Borel set $B$,
$$|Df|(B):= \inf \left\{|Df|(A) \ \big|\  B \subset A,\  A\ {\rm open}\right\}.$$

\begin{defn}\label{BRHMHF-def-2.8}
	A Borel set $E\subset X$ with $\meas(E)<\infty$  is called a {\emph {set of finite perimeter}} if $\chi_E\in BV(X)$. In this case, $|D\chi_E|$ is called the perimeter measure, denoted by $Per_E$.
\end{defn}

In general, a set of finite perimeter can be approximated by level sets of a sequence of Lipschitz functions on $X$. The following fact states a condition for the existence of interior approximations.

 \begin{lemma}\label{BRHMHF-lem5.4}
	Let $\Omega$ be a bounded open set of finite perimeter. Assume that $$\liminf_{r\to0}\frac{\meas\big(B_r(x_0)\setminus\Omega \big)}{\meas\big(B_r(x_0)\big)} \gs \gamma,\quad \forall x_0\in \partial\Omega,$$
  for some $\gamma\in (0,1)$. 	
	 Then there is a sequence $\phi_k\in Lip_0(\Omega)$, $0\ls \phi_k\ls 1$, such that   $\phi_k(x)\to \chi_\Omega$ in $L^1(X)$, as $k\to\infty$,  and
	\begin{equation}
		\label{BRHMHF-equ-4.9}
	\limsup_{k\to\infty}\int_X\eta|\nabla \phi_k|{\rm d}\meas \ls \frac{c}{\gamma}\cdot\int_X\eta {\rm d}Per_{\Omega} , \quad \forall \eta\in C(X), \quad \eta\gs0,
	 \end{equation}
	 	where the constant $c$ depends only on $N,K$ and ${\rm diam}(\Omega)$.	
\end{lemma}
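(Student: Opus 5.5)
The plan is to build $\phi_k$ as cut-offs in the \emph{interior} distance: take $\phi_k(x):=\min\{1,\max\{0,\,k\,\delta(x)-1\}\}$ with $\delta(x)=d(x,X\setminus\Omega)$, or, more flexibly, $\phi_k:=\min\{1,\delta/s_k\}$ truncated so that it vanishes on $\{\delta<s_k\}$, for a sequence $s_k\to0$. These functions are Lipschitz with compact support in $\Omega$ (compact because $\Omega$ is bounded and $X$ is proper), satisfy $0\ls\phi_k\ls1$, and converge to $\chi_\Omega$ in $L^1$ since $\meas(\partial\Omega)=0$. Moreover $|\nabla\phi_k|\ls s_k^{-1}\chi_{\{s_k<\delta<2s_k\}}$. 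Hence
\[
\int_X\eta|\nabla\phi_k|\,{\rm d}\meas\ls \frac1{s_k}\int_{\Omega\cap\{\delta<2s_k\}}\eta\,{\rm d}\meas ,
\]
and the task reduces to comparing the measure of the inner tubular neighbourhood $\{x\in\Omega:\delta(x)<2s\}$, weighted by $\eta$, with $s\cdot Per_\Omega$.

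The key step is a covering argument. For small $s$, cover $\partial\Omega$ by balls $B_{s}(x_i)$ with $x_i\in\partial\Omega$ and the family $\{B_{s/5}(x_i)\}$ disjoint (Vitali). The enlarged balls $B_{3s}(x_i)$ cover the tube $\{\delta<2s\}\cap\Omega$. On each ball I apply the relative isoperimetric inequality on $RCD(K,N)$ spaces, which follows from the $(1,1)$-Poincar\'e inequality and doubling:
\[
\min\{\meas(B_r\cap\Omega),\meas(B_r\setminus\Omega)\}\ls C\, r\, Per_\Omega(B_{2r}).
\]
The density hypothesis gives $\meas(B_r(x_i)\setminus\Omega)\gs \tfrac{\gamma}{2}\meas(B_r(x_i))$ for $r$ small. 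This needs uniformity in $x_i$, which is not directly given, since the hypothesis is only a pointwise $\liminf$; I return to this below. With it, $\meas(B_r(x_i))\ls \tfrac{2}{\gamma}\max\{\ldots\}$, and combining with the isoperimetric inequality, using the exterior part when $\meas(B_r\cap\Omega)\gs\meas(B_r\setminus\Omega)$ and the interior part otherwise, yields
\[
\meas(B_{3s}(x_i)\cap\Omega)\ls \frac{C}{\gamma}\, s\, Per_\Omega(B_{6s}(x_i)).
\]
Summing over $i$ with $\eta$ continuous, so that $\eta$ is nearly constant on small balls by uniform continuity on the compact set $\overline\Omega$, and using bounded overlap of $\{B_{6s}(x_i)\}$ from doubling, gives
\[
\frac1s\int_{\{\delta<2s\}}\eta\,{\rm d}\meas\ls\frac{c}{\gamma}\int_{U_{6s}}(\eta+o(1))\,{\rm d}Per_\Omega,
\]
where $U_{6s}$ is the $6s$-neighbourhood of $\partial\Omega$. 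Letting $s\to0$ and using that $Per_\Omega$ is concentrated on $\partial\Omega$ and is finite yields (\ref{BRHMHF-equ-4.9}). The ordering of limits, with $\eta$ fixed and $s\to0$, is fine.

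The main obstacle is the non-uniformity of the density assumption. My plan is to handle it by an Egorov-type exhaustion. Let $G_j:=\{x\in\partial\Omega:\meas(B_r(x)\setminus\Omega)\gs\tfrac\gamma2\meas(B_r(x))\ \forall r<1/j\}$. These sets increase to $\partial\Omega$, and they are closed by continuity of $r\mapsto\meas(B_r)$ up to shrinking the constant. On $G_j$ the covering argument works for $s<1/j$. For the remainder $\partial\Omega\setminus G_j$, I would use that $Per_\Omega(\partial\Omega\setminus G_j)\to0$ and bound the tube portion near it through the same isoperimetric estimate with the interior alternative. I expect this bookkeeping, and choosing $s_k$ diagonally in $j$ as a subsequence, to be the delicate part, since the final bound only needs to hold in the $\limsup$.
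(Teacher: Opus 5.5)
Your covering argument is fine when the density bound is \emph{uniform}, i.e.\ $\meas(B_r(x_0)\setminus\Omega)\gs\frac{\gamma}{2}\meas(B_r(x_0))$ for all $x_0\in\partial\Omega$ and all $r<r_0$. In that case the relative isoperimetric inequality gives $\meas(B_r(x_i)\cap\Omega)\ls\frac{C}{\gamma}\,r\,Per_\Omega(B_{2r}(x_i))$ in either alternative, and bounded overlap finishes. (This is also all the paper needs downstream: in Theorem \ref{BRHMHF-thm-5.4} the uniform exterior ball condition gives $\meas(B_r(x_0)\setminus\Omega)\gs c\,\meas(B_r(x_0))$ for $r<R_{\rm ext}$.)

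Under the stated pointwise $\liminf$ hypothesis, however, the non-uniformity you flagged is fatal for your choice of $\phi_k$; it is not a matter of bookkeeping. Interior distance cut-offs can have unbounded total variation for \emph{every} sequence $s_k\to0$. Take $X=\mathbb R^2$ and remove from $(0,1)^2$, for each $m\gs2$, the closed discs of radius $16^{-m}$ centred at the points of $(4^{-m}\mathbb Z)^2$ lying well inside the strip $\{2^{-m}<x_2<2^{-m+1}\}$. The resulting $\Omega$ has the following properties:
\begin{itemize}
\item it is open, since the discs accumulate only on $\{x_2=0\}$;
\item $Per_\Omega(X)\ls 4+C\sum_m 8^m16^{-m}<\infty$;
\item the exterior density at every boundary point tends to $\frac12$ (or is $\gs\frac12$ on $\partial(0,1)^2$), so the hypothesis holds with $\gamma=\frac14$.
\end{itemize}
Yet for small $t$, choose $m$ with $4^{-m}\in(\frac52t,10t]$. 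A fixed fraction of each of the $\approx8^m$ generation-$m$ grid cells satisfies $t<\delta<2t$. Hence $\meas(\{t<\delta<2t\})\gtrsim2^{-m}\gtrsim\sqrt t$, and $\int_X|\nabla\phi_k|\,{\rm d}\meas=s_k^{-1}\meas(\{s_k<\delta<2s_k\})\gtrsim s_k^{-1/2}\to\infty$.

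The same example pinpoints where your Egorov patch fails. The generation-$m$ discs with $4^{-m}\approx s$ lie in $\partial\Omega\setminus G_j$. For $x_i$ on them, the minimum in the relative isoperimetric inequality on $B_{3s}(x_i)$ is the exterior part, which is a negligible fraction of $\meas(B_{3s}(x_i))$. The inequality then says nothing about $\meas(B_{3s}(x_i)\cap\Omega)\approx s^2$. So the ``interior alternative'' is precisely what is unavailable at such points, and $Per_\Omega(\partial\Omega\setminus G_j)\to0$ cannot control the tube around them.

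What is missing is a construction whose transition layer near each boundary point sits at a scale where the density bound is already in force, instead of one global width $s_k$. Examples would be scale-adapted cut-offs, or suitable level sets of a $BV$ approximation of $\chi_\Omega$ selected via coarea. The paper does not reprove this step: it quotes \cite[Lemma 4.12]{ZZ25} for nonnegative $\eta\in Lip_0(X)$ and only adds the routine approximation of a continuous $\eta$ by Lipschitz functions. As written, your argument establishes the lemma only under a uniform exterior density assumption.
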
 
\begin{proof}
	It was proved in \cite[Lemma 4.12]{ZZ25} that (\ref{BRHMHF-equ-4.9}) holds for any nonnegative function $\eta\in Lip_0(X)$.
	
	For a general $\eta\in C_0(X)$, $\eta\gs0$, given any $\epsilon>0$, there exists a function $\eta_\epsilon\in Lip_0(X)$, $\eta_\epsilon\gs0$,  such that  
	$$\eta_\epsilon\gs \eta-\epsilon,\qquad \int_X\eta_\epsilon {\rm d}Per_\Omega\ls \int_X\eta{\rm d}Per_\Omega+\epsilon.$$
	By applying (\ref{BRHMHF-equ-4.9}) to $\eta_\epsilon+\epsilon$, we conclude that 
	$$\limsup_{k\to\infty}\int_X\eta|\nabla \phi_k|{\rm d}\meas \ls \limsup_{k\to\infty}\int_X(\eta_\epsilon+\epsilon)|\nabla \phi_k|{\rm d}\meas\ls \frac{c}{\gamma}\cdot\left(\int_X\eta {\rm d}Per_{\Omega}+\epsilon+\epsilon Per_\Omega(X)\right).$$	
Letting $\epsilon\to 0^+$, we finished the proof.	
\end{proof}
\begin{remark}
	\label{rem:rem5.5-BRHMHF} In the Euclidean setting, some conditions for the interior approximations of a set of finite perimeter have been given by Chen-Li-Torres \cite{CLT20}, Comi-Torres \cite{CT17}, and Gui-Hu-Li \cite{GHL23}.
\end{remark}

\subsection{Boundary estimates for nonnegative subsolutions}
Let us begin with the following Duhamel-type principle.
\begin{theorem}\label{BRHMHF-thm-5.3} Let $\{\phi_k\}_{k\in\mathbb N}$ be a sequence of functions such that  $\phi_k\in Lip_0(\Omega)$, $0\ls \phi_k\ls 1$ for each $k$, and $ \phi_k\to \chi_\Omega$ in $L^1(\Omega)$ as $k\to\infty.$
 Let $f\in W^{1,2}(\Omega\times(0,T_0))\cap L^\infty(\Omega\times(0,T_0))$, $f\gs0$, such that  $\partial_tf(x,t)\in L^\infty_{\rm loc}(\Omega\times(0,T_0))$. Assume that 
 $${\bf \Delta}f(x,t)\gs \partial_tf(x,t)+a(x,t)$$
 for some $a(x,t)\in L^1(\Omega\times(0,T_0))$ in the sense of distributions.
   Then for any $0<\varepsilon<T<T_0$ , we have for all $x\in\Omega$ that
\begin{equation}\label{equ:Duhamel-1}
		\begin{split}
	&  \int_\Omega p_\varepsilon^\Omega(x,y)f(y,T)\dm(y)-\int_\Omega p_T^\Omega(x,y)f(y, \varepsilon )\dm(y)\\
	\ls & \liminf_{k\to+\infty}\int_\Omega\int_\varepsilon^T|\nabla p_t^\Omega(x,\cdot)|f(\cdot,T+\varepsilon -t){\rm d}t \cdot |\nabla\phi_k|\dm- \int_\Omega\int_\varepsilon^T  p^\Omega_t(x,\cdot)a(\cdot,T+\varepsilon-t){\rm d}t\dm.
	\end{split}
	 	\end{equation}	
\end{theorem}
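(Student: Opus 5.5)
The natural route is a Duhamel computation: pair $f(\cdot,T+\varepsilon-t)$ against the Dirichlet heat kernel $p^\Omega_t(x,\cdot)$, cut off by $\phi_k$, and integrate in $t\in[\varepsilon,T]$. Fix $x\in\Omega$ and set
$$F_k(t):=\int_\Omega p^\Omega_t(x,y)\,\phi_k(y)\,f(y,T+\varepsilon-t)\,\dm(y),\qquad t\in[\varepsilon,T].$$
Since $\phi_k$ has compact support in $\Omega$ and $\partial_t f\in L^\infty_{\rm loc}$, $F_k$ is absolutely continuous. Its derivative is
$$F_k'(t)=\int_\Omega \partial_t p_t^\Omega\,\phi_k f\,\dm-\int_\Omega p_t^\Omega\,\phi_k\,\partial_t f\,\dm.$$
For the first term I use $\partial_t p^\Omega_t=\Delta_\Omega p^\Omega_t$. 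The test function $\phi_k f(\cdot,s)$ lies in $W^{1,2}_0(\Omega)$, so integrating by parts gives
$$-\int\ip{\nabla p_t^\Omega}{\nabla(\phi_k f)}\,\dm .$$
For the second term I use the subsolution inequality $\partial_t f\ls{\bf \Delta} f-a$, tested against the nonnegative function $p^\Omega_t(x,\cdot)\phi_k$. This test function is locally Lipschitz by \cite[Corollary 1.5]{HZ20} and compactly supported, so it is admissible after the usual approximation. Since the test function is positive, the inequality reverses under the minus sign and gives
$$-\int p_t\phi_k\,\partial_t f\,\dm\ \gs\ \int\ip{\nabla f}{\nabla(p_t\phi_k)}\,\dm+\int p_t\phi_k\, a\,\dm .$$

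Expanding the gradients of the products, the terms $\phi_k\ip{\nabla p_t}{\nabla f}$ cancel. What remains is
$$F_k'(t)\gs \int p_t\, a\,\phi_k\,\dm -\int f\ip{\nabla p_t}{\nabla\phi_k}\,\dm+\int p_t\ip{\nabla f}{\nabla \phi_k}\,\dm.$$
This is not yet in the desired form, because of the term $\int p_t\ip{\nabla f}{\nabla\phi_k}$. The remedy is to test the time-shifted inequality more carefully, splitting the computation symmetrically so that only $f\nabla p_t$ interacts with $\nabla\phi_k$.

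The step I expect to be the main obstacle is disposing of this last term, and I would do it by integrating by parts in space once more. This amounts to testing ${\bf \Delta}f\gs\partial_t f+a$ against $p_t\phi_k$ directly rather than expanding it, and then using the symmetry $\int\ip{\nabla p_t}{\nabla(\phi_k f)}=\int\ip{\nabla(p_t\phi_k)}{\nabla f}+\int f\ip{\nabla p_t}{\nabla\phi_k}-\int p_t\ip{\nabla f}{\nabla\phi_k}$. Organized this way, the mixed term enters with a favourable sign whenever $\nabla\phi_k$ is oriented inward. Failing a sign argument, I would note that $p^\Omega_t(x,\cdot)\to0$ at $\partial\Omega$ while $\nabla\phi_k$ concentrates there, so by the boundary decay \eqref{BRHMHF-equ-1.6} this term tends to $0$ as $k\to\infty$ after integration in $t$.

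Having the bound on $F_k'$, I integrate over $[\varepsilon,T]$. This gives
$$F_k(T)-F_k(\varepsilon)\ls \int_\Omega\int_\varepsilon^T|\nabla p_t|\,f(\cdot,T+\varepsilon-t)\,{\rm d}t\,|\nabla\phi_k|\,\dm-\int_\varepsilon^T\!\!\int p_t\,a\,\phi_k\,\dm\,{\rm d}t+o_k(1).$$
Here I used Cauchy--Schwarz, $|\ip{\nabla p_t}{\nabla\phi_k}|\ls|\nabla p_t||\nabla\phi_k|$, together with $f\gs0$. The sign of the bound comes out reversed relative to \eqref{equ:Duhamel-1}; this is exactly the orientation of the statement, with $F_k(T)=\int p_T\phi_k f(\cdot,\varepsilon)$ and $F_k(\varepsilon)=\int p_\varepsilon\phi_k f(\cdot,T)$.

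Finally I let $k\to\infty$. Since $f\in L^\infty$, $p_t$ is bounded for $t\gs\varepsilon$, $a\in L^1$, and $\phi_k\to\chi_\Omega$ in $L^1$ with $0\ls\phi_k\ls1$, dominated convergence lets me replace $\phi_k$ by $1$ in every term except the $|\nabla\phi_k|$ term. For that term I take the $\liminf$, which yields \eqref{equ:Duhamel-1}.
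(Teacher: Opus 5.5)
Your computation of $F_k'$ is correct, but the proof has a genuine gap at exactly the step you flagged. You test the subsolution inequality against $p^\Omega_t\phi_k$ and expand, which creates the term $\int_\varepsilon^T\!\int_\Omega p^\Omega_t(x,\cdot)\,\ip{\nabla f}{\nabla\phi_k}\,\dm\,{\rm d}t$, and this term is never actually removed:
\begin{itemize}
\item The ``symmetry'' identity you write is just the Leibniz rule, so reorganizing with it reproduces the same term.
\item There is no sign argument. What matters is the sign of $\ip{\nabla f}{\nabla\phi_k}$, and nothing relates the direction of $\nabla f$ to that of $\nabla\phi_k$. Nor is the term dominated by $f|\nabla p^\Omega_t||\nabla\phi_k|$, since $|\nabla f|$ can be large where $f$ is small.
\item The decay fallback is unavailable under the stated hypotheses. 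The statement assumes nothing about $\partial\Omega$, so \eqref{BRHMHF-equ-1.6}, which requires the exterior ball condition, cannot be invoked. Moreover, the $\phi_k$ are only assumed to satisfy $0\ls\phi_k\ls1$ and $\phi_k\to\chi_\Omega$ in $L^1$. So $|\nabla\phi_k|$ need not concentrate near $\partial\Omega$ or have bounded mass, and even a bound $p^\Omega_t\ls C\delta$ would give no control of $\int\delta\,|\nabla f|\,|\nabla\phi_k|\,\dm$.
\end{itemize}

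The missing idea is to keep the cutoff and the test function separate.

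First, integrate by parts only on the heat-kernel side:
$F_k'(t)=-\int\phi_k\ip{\nabla f}{\nabla p^\Omega_t}\dm-\int f\ip{\nabla p^\Omega_t}{\nabla\phi_k}\dm-\int\phi_k p^\Omega_t\,\partial_tf\,\dm$.
Since $f\gs0$, bound the middle term below by $-\int f|\nabla p^\Omega_t||\nabla\phi_k|\,\dm$.

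Second, integrate over $(\varepsilon,T)$ and let $k\to\infty$ \emph{before} using the inequality for $f$. Both $\ip{\nabla f}{\nabla p^\Omega_t}$ and $p^\Omega_t\partial_tf$ lie in $L^1(\Omega\times(\varepsilon,T))$, so dominated convergence removes $\phi_k$ from the first and third terms. Only the $|\nabla\phi_k|$ term survives, under a $\liminf$.

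Third, test ${\bf\Delta}f\gs\partial_tf+a$ against $p^\Omega_t(x,\cdot)$ itself. This is admissible because it is nonnegative and belongs to $W^{1,2}_0(\Omega)\cap L^\infty(\Omega)$; approximate it by nonnegative $Lip_0(\Omega)$ functions, independently of $\phi_k$. This yields $-\int p^\Omega_t\partial_tf\,\dm\gs\int\ip{\nabla p^\Omega_t}{\nabla f}\dm+\int p^\Omega_t a\,\dm$. The cross terms now cancel exactly, and no term of the form $p^\Omega_t\ip{\nabla f}{\nabla\phi_k}$ ever appears. The vanishing of $p^\Omega_t$ on $\partial\Omega$ enters only through membership in $W^{1,2}_0(\Omega)$, with no boundary regularity needed. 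This is the paper's argument.

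Also, integrating $F_k'\gs\cdots$ bounds $F_k(\varepsilon)-F_k(T)$ from above, not $F_k(T)-F_k(\varepsilon)$ as in your display. Since $F_k(\varepsilon)=\int p^\Omega_\varepsilon\phi_kf(\cdot,T)\,\dm$, this is already the orientation of \eqref{equ:Duhamel-1}.
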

\begin{proof}Fix $0<\varepsilon<T<T_0$, $x\in\Omega$, and set
	$$J_k(x,t):=\int_\Omega\phi_k (y)p^\Omega_t(x,y)f(y,T+\varepsilon-t)\dm(y).$$
	From \cite[Theorem 1.1]{HZ20}, we have $\partial_tp^\Omega_t(x,\cdot)\in L^\infty_{\rm loc}(\Omega\times(0,T_0))$. Combining with the assumption $\partial_tf(x,t)\in L^\infty_{\rm loc}(\Omega\times(0,T_0))$, the function $t\mapsto J_k(x,t)$ is locally Lipschitz continuous in $(0,T_0)$, and for almost all $t\in(0,T_0)$ that 
	\begin{equation*}
		\begin{split}\partial_t J_k(x,t)&=\int_\Omega\phi_k(y)\Big(\partial_tp^\Omega_t(x,y)f(y,T+\varepsilon -t)+p^\Omega_t(x,y)\partial_tf(y,T+\varepsilon -t)\Big)\dm(y)\\
		&= \int_\Omega\phi_k f(\cdot,T+\varepsilon -t)\Delta p^\Omega_t(x,\cdot)\dm-\int_\Omega\phi_k p^\Omega_t(x,\cdot)\partial_t f(\cdot,T+\varepsilon -t)\dm \\
		&=-\int_\Omega \phi_k\ip{\nabla f(\cdot,T+\varepsilon -t)}{\nabla p_t^\Omega(x,\cdot)}\dm- \int_\Omega\ip{\nabla p_t^\Omega(x,\cdot)}{\nabla\phi_k}f(\cdot,T+\varepsilon -t)\dm \\
		&\quad -\int_\Omega\phi_k p^\Omega_t(x,\cdot) \partial_tf(\cdot,T+\varepsilon -t)\dm 
					\end{split}
	\end{equation*}
	where we have used  $\phi_k f(\cdot,T+\varepsilon -t)\in W^{1,2}(\Omega)$. Integrating over $(\varepsilon,T)$ gives 
	\begin{equation*}
		\begin{split}
	&\quad	J_k(x,T)-J_k(x,\varepsilon)\\
	&\gs-\int_\varepsilon^T\int_\Omega \phi_k\ip{\nabla f(\cdot,T+\varepsilon -t)}{\nabla p_t^\Omega(x,\cdot)}\dm{\rm d}t- \int_\varepsilon^T\int_\Omega|\nabla p_t^\Omega(x,\cdot)|\cdot |\nabla\phi_k|f(\cdot,T+\varepsilon -t) \dm{\rm d}t \\
		&\quad -\int_\varepsilon^T\int_\Omega\phi_k p^\Omega_t(x,\cdot) \partial_tf(\cdot,T+\varepsilon -t)\dm{\rm d}t,
					\end{split}
	\end{equation*}	
	where we have used $f\gs0$ and $-\ip{\nabla p_t^\Omega}{\nabla \phi_k}\gs -|\nabla p^\Omega
	_t|\cdot |\nabla \phi_k|.$
	
Notice that $p_t^\Omega(x,\cdot)f(\cdot,t)\in L^1(\Omega)$ for any $t\in[\varepsilon,T]$, and $\phi_k\in L^\infty(\Omega)$, the donimated convergence theorem implies
$$\lim_{k\to+\infty}J_k(x,t)= 	\int_\Omega p_t^\Omega(x,y)f(y,T+\varepsilon-t)\dm(y),\qquad \forall t\in[\varepsilon,T] .$$
Similarly, by $\ip{\nabla f(\cdot,T+\varepsilon -t)}{\nabla p_t^\Omega(x,\cdot)}\in L^1(\Omega\times(\varepsilon,T))$ and $p^\Omega_t(x,\cdot) \partial_tf(\cdot,T+\varepsilon -t)\in L^1(\Omega\times(\varepsilon,T))$, and letting $k\to+\infty$, we obtain
	\begin{equation*}
		\begin{split}
	&\quad	\int_\Omega p_T^\Omega(x,y)f(y, \varepsilon )\dm(y)-\int_\Omega p_\varepsilon^\Omega(x,y)f(y,T )\dm(y)\\
	&\gs-\int_\varepsilon^T\int_\Omega  \ip{\nabla f(\cdot,T+\varepsilon -t)}{\nabla p_t^\Omega(x,\cdot)}\dm{\rm d}t -\int_\varepsilon^T\int_\Omega  p^\Omega_t(x,\cdot) \partial_tf(\cdot,T+\varepsilon -t)\dm{\rm d}t\\
	&\quad -\liminf_{k\to+\infty}\int_\Omega\int_\varepsilon^T|\nabla p_t^\Omega(x,\cdot)|f(\cdot,T+\varepsilon -t){\rm d}t \cdot |\nabla\phi_k|\dm.
	\end{split}
	\end{equation*}		
By ${\bf\Delta}f\gs\partial_tf+a$ in the sense of distributions and $p_t^\Omega(x,\cdot)\in W^{1,2}_0(\Omega)\cap L^\infty(\Omega)$, we have 
\begin{equation*}
\begin{split}
	-\int_\Omega p^\Omega_t(x,\cdot)\partial_tf(\cdot, T+\varepsilon-t)\dm &\gs  \int_\Omega p^\Omega_t(x,\cdot) \Big(-{\rm d}{\bf\Delta} f(\cdot, T+\varepsilon-t)+a(\cdot,T+\varepsilon-t)\dm\Big)\\
	&=\int_\Omega \ip{\nabla p^\Omega_t(x,\cdot)}{\nabla f(\cdot, T+\varepsilon-t)} \dm+p^\Omega_t(x,\cdot)a(\cdot,T+\varepsilon-t)\dm\end{split}	
\end{equation*}
	Adding the above inequalities implies the desired assertion.
\end{proof}

\begin{theorem}\label{BRHMHF-thm-5.4} Let $\Omega$ be a bounded open set of finite perimeter, and suppose that $\Omega$ satisfies a uniformly exterior ball condition with $R_{\rm ext}>0$. 
 Let $f\in W^{1,2}(\Omega\times(0,+\infty))$, $0\ls f(x,t)\ls M$ for some $M>0$ on $\Omega\times(0,+\infty)$,   $f\in C(\overline\Omega\times[a,b])$ for any $[a,b]\subset (0,+\infty)$ and $f(x,t)=w(x)$ for all $x\in\partial\Omega$ and any $t\in(0,+\infty)$. Suppose $\partial_tf\in L^\infty_{\rm loc}(\Omega\times(0,+\infty))$. Assume that 
 $${\bf \Delta}f(x,t)\gs \partial_t f(x,t)$$
on $\Omega\times(0,+\infty)$ in the sense of distributions. Take any $x_0\in \partial\Omega$ and any $R\in(0,{\rm diam}(\Omega)/100)$. 
   Then for any $T\gs \max\{R^2_{\rm ext},2\max_{\Omega\cap B_{2R}(x_0)}|w|\}$ we have   
    $$\frac{f(x,T+T_1)}{\delta(x)}\ls c_{R,T}\left( \int_\Omega f(y,T_1)\dm(y)+ \int_{\partial\Omega}\left(\int_{d^2(x,y)}^{D^2}\frac{{\rm d}t}{t\cdot \meas(B_{\sqrt t}(x))}+1\right)|w(y)|{\rm d}Per_\Omega(y)+1\right)$$
for any $x\in\Omega\cap B_{R/4}(x_0)$ and any $T_1>0$,    where $D:={\rm diam}(\Omega)$.
   \end{theorem}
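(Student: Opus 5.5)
We plan to apply the Duhamel-type principle, Theorem \ref{BRHMHF-thm-5.3}, to the time-shifted function $\tilde f(y,t):=f(y,t+T_1)$ with $a\equiv0$. We use the interior approximating sequence $\phi_k$ from Lemma \ref{BRHMHF-lem5.4}. This lemma applies because the uniform exterior ball condition forces a uniform lower bound $\gamma>0$ on the exterior density, by volume doubling. Fixing $x\in\Omega\cap B_{R/4}(x_0)$ and $0<\varepsilon<T$, Theorem \ref{BRHMHF-thm-5.3} gives
\begin{equation*}
\int_\Omega p^\Omega_\varepsilon(x,y)f(y,T+T_1)\dm(y)\ls \int_\Omega p^\Omega_T(x,y)f(y,\varepsilon+T_1)\dm(y)+\liminf_{k\to\infty}\int_\Omega G_\varepsilon(y)\,|\nabla\phi_k|(y)\dm(y),
\end{equation*}
where $G_\varepsilon(y):=\int_\varepsilon^T|\nabla p^\Omega_t(x,\cdot)|(y)f(y,T+\varepsilon-t+T_1){\rm d}t$. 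As $\varepsilon\to0^+$, the left side tends to $f(x,T+T_1)$ by interior continuity. For the first term on the right we use the heat kernel upper bound (\ref{BRHMHF-equ-1.6}), or (\ref{equ:upper-heat-1}), namely $p^\Omega_T(x,y)\ls c_T'\delta(x)$. Together with the continuity of $f$ in time near $T_1$, this bounds the term by $c\,\delta(x)\int_\Omega f(y,T_1)\dm$ after letting $\varepsilon\to0$. If needed, we first shift $T_1$ slightly and use monotonicity-free continuity, or apply the argument with $T_1$ replaced by $T_1+\varepsilon$ and pass to the limit.

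The heart of the proof is the boundary term. Since $\phi_k\in Lip_0(\Omega)$, the measures $|\nabla\phi_k|\meas$ concentrate near $\partial\Omega$. On the set where $\delta(y)\ls\sqrt\varepsilon$, Lemma \ref{BRHMHF-thm-1.10} combined with $f\ls M$ would only give a bound involving $M$, not $|w|$. So we need to exploit continuity of $f$ up to the boundary. Uniform continuity on $\overline\Omega\times[T_1+\varepsilon,T_1+T]$ gives $f(y,s)\ls |w(\bar y)|+\omega(\delta(y))$ for a nearest boundary point $\bar y$. We therefore aim to write, for $\delta(y)\ls\sqrt\varepsilon$,
\begin{equation*}
G_\varepsilon(y)\ls c'_T\,\delta(x)\Big(\int_{d^2(x,y)}^{D^2}\frac{{\rm d}t}{t\,\meas(B_{\sqrt t}(x))}+1\Big)\big(\tilde w(y)+o_{\delta(y)\to0}(1)\big),
\end{equation*}
where $\tilde w$ is a continuous extension of $|w|$ to $X$. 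The bracket is a continuous function of $y$ away from $x$, and $x$ stays at positive distance from $\partial\Omega$. For fixed $\varepsilon$, the $\phi_k$ can be chosen, or their supports eventually lie, within the $\sqrt\varepsilon$-neighbourhood of $\partial\Omega$. Then (\ref{BRHMHF-equ-4.9}), applied with the continuous weight $\eta=(\text{bracket})\cdot(\tilde w+o(1))$, bounds $\liminf_k\int G_\varepsilon|\nabla\phi_k|\dm$ by $\frac{c}{\gamma}\int_{\partial\Omega}(\cdots)(|w|+o(1)){\rm d}Per_\Omega$. We then let $\varepsilon\to0$. The remaining error $\int o(1){\rm d}Per_\Omega$ is controlled by the finite perimeter and is absorbed into the constant $+1$.

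The main obstacle is making this localization rigorous. Lemma \ref{BRHMHF-thm-1.10} holds only for $y$ near the boundary, depending on $\varepsilon$, while the $\phi_k$ might have gradient mass in the interior. To handle this, we take $\phi_k$ whose gradients are supported in shrinking boundary layers, which is the structure of the construction in \cite[Lemma 4.12]{ZZ25}. Alternatively, we bound the interior part by $\|\nabla\phi_k\|_{L^1(\{\delta>\sqrt\varepsilon\})}\to0$ together with local boundedness of $|\nabla p_t^\Omega|$ there, via (\ref{BRHMHF-equ-3.15}). The hypotheses $T\gs R^2_{\rm ext}$ and $T\gs2\max_{\Omega\cap B_{2R}(x_0)}|w|$ enter through the heat kernel bounds of Lemma \ref{BRHMHF-thm-1.10}. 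If needed, they also allow a fallback: use Theorem \ref{thm:comparison-heat} to control $f$ by a barrier near $x_0$, so that the sup of $f$ near $\partial\Omega\cap B_{2R}(x_0)$ is governed by $|w|$, with contributions from farther boundary portions bounded by $M$ and absorbed into $c_{R,T}$. Dividing by $\delta(x)$ then yields the stated inequality.
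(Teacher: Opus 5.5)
Your proposal is correct. It follows the paper's skeleton:
\begin{itemize}
\item Theorem \ref{BRHMHF-thm-5.3} applied to the time-shifted $f$, with the $\phi_k$ of Lemma \ref{BRHMHF-lem5.4};
\item the bound of Lemma \ref{BRHMHF-thm-1.10} on $\{\delta\ls\sqrt\varepsilon\}$;
\item (\ref{BRHMHF-equ-4.9}) with a continuous weight;
\item the estimate $p^\Omega_T(x,y)\ls c_T'\delta(x)$ for the initial term.
\end{itemize}

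The genuine difference is how you get a continuous, time-uniform majorant of $f$ near $\partial\Omega$.

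\textbf{The paper's route.} It uses the barrier of Theorem \ref{thm:comparison-heat}: $f\ls\eta$, where $\eta=(M/L_R+1)h$ on $V_R(x_0)$ and $\eta=M+L_R$ elsewhere. This is where $R$ and the condition $T\gs2\max_{\Omega\cap B_{2R}(x_0)}|w|$ enter. The paper then splits $\partial\Omega$ into the part inside $B_{R/2}(x_0)$, where $\eta=(M/L_R+1)|w|$, and the rest, where $F_x\ls c_R$. The second piece produces the $+1$.

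\textbf{Your route.} You use the hypothesis $f\in C(\overline\Omega\times[a,b])$ directly. The cleanest form is $g(y):=\max_{s\in[T_1,T_1+T]}f(y,s)$. It is continuous on the compact set $\overline\Omega$ (this uses $T_1>0$) and equals $w$ on $\partial\Omega$. So no $\varepsilon$-dependent modulus is needed. Your $o(1)$ term vanishes on $\partial\Omega$, where $Per_\Omega$ is carried, so nothing has to be absorbed into the $+1$.

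\textbf{What each route buys.} Yours gives a sharper estimate: no factor $M/L_R+1$, no splitting, no $+1$. The hypothesis $T\gs 2\max|w|$ is never used. Your remark that it ``enters through the heat kernel bounds'' is therefore inaccurate; it enters the paper's proof only via the barrier. Your route also sidesteps a point the paper passes over. As stated, Theorem \ref{thm:comparison-heat} bounds $f(\cdot,s)$ only for $s>2L_R$, but the Duhamel integrand samples times down to $T_1+\varepsilon$. In exchange, the paper's barrier needs only $f\ls M$, the subsolution property and a Sobolev boundary condition, not continuity of $f$ up to $\partial\Omega$.

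\textbf{Localization.} Your option (b) is sound and more careful than the paper, which applies (\ref{BRHMHF-equ-4.9}) directly.
\begin{itemize}
\item $\int_{\{\delta\gs\sqrt\varepsilon\}}|\nabla\phi_k|\dm\to0$ follows from (\ref{BRHMHF-equ-4.9}) with the weight $\min\{1,d(\cdot,X\setminus\Omega)/\sqrt\varepsilon\}$, which vanishes on $\partial\Omega$.
\item $\int_\varepsilon^T|\nabla p^\Omega_t(x,\cdot)|{\rm d}t$ is bounded on $\{\delta\gs\sqrt\varepsilon\}$ by the Li--Yau argument behind (\ref{BRHMHF-equ-3.15}).
\item Cutting off $F_x$ near $x$ once $\sqrt\varepsilon<\delta(x)/2$ makes the weight continuous.
\end{itemize}
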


\begin{proof}
Fix any $T\gs \max\{R^2_{\rm ext},2\max_{\Omega\cap B_{2R}(x_0)}|w|\}$. Since $\Omega$ is a set of finite perimeter and it satisfies a uniformly exterior ball condition with $R_{\rm ext}$, according to Lemma \ref{BRHMHF-lem5.4}, there exists a family of functions $\phi_k\in Lip_0(\Omega)$, $0\ls \phi_k\ls 1$, $\phi_k\to \chi_\Omega$ in $L^1(\Omega)$, and 
\begin{equation}\label{equ:BRHMHF-5.6}
	\limsup_{k\to\infty}\int_X\eta|\nabla \phi_k|\dm\ls c_1\int_X\eta {\rm d}Per_\Omega,\quad \forall\eta\in C_0(X),\quad \eta\gs0.
\end{equation} 
Theorem \ref{BRHMHF-thm-5.3} states for any $\varepsilon\in(0,T)$ that 
 \begin{equation}\label{equ:Duhamel-2}
 		\begin{split}
 	 & \int_\Omega p_\varepsilon^\Omega(x,y)f(y,T_1+T)\dm(y)-\int_\Omega p_T^\Omega(x,y)f(y, T_1+\varepsilon)\dm(y) \\	\ls & \liminf_{k\to+\infty}\int_\Omega\int_\varepsilon^T|\nabla p_t^\Omega(x,\cdot)|f(\cdot,T_1+T+\varepsilon -t){\rm d}t \cdot |\nabla\phi_k|\dm.
  	\end{split}
 	 	\end{equation}

 From Theorem \ref{thm:comparison-heat} and assuming $\varepsilon<R^2$, we have for any $t\in(0,T)$ that 
\begin{equation*}
	f(y,T_1+T+\varepsilon-t) \ls \eta(y):=\begin{cases}
		\big(M/(\max_{\Omega\cap V_{R}(x_0)}|w|)+1\big)h(y), & y\in V_{R}(x_0)\cap \Omega,\\
		M+\max_{\Omega\cap V_{R}(x_0)}|w|,& 	\Omega\setminus V_{R}(x_0),
	\end{cases}  
\end{equation*}
  where $V_R$ and  $h(y)$ are given in Theorem \ref{thm:comparison-heat}.
   Now, by combining this with (\ref{BRHMHF-equ-3.12}), we have
\begin{equation*}
\int_\varepsilon^T|\nabla p^\Omega_t(x,y)|f(y,T_1+T+\varepsilon-t){\rm d}t\ls \eta(y)\int_\varepsilon^T|\nabla p^\Omega_t(x,y)| {\rm d}t\ls c_T'\delta(x)\cdot F_x(y)\eta(y),
 \end{equation*}
 for all $y\in\Omega$ such that $\delta(y)<\sqrt\varepsilon,$
 where $$F_x(y):= \int_{d^2(x,y)}^{D^2}\frac{{\rm d}t}{t\cdot \meas(B_{\sqrt t}(x))}+1.$$
By (\ref{equ:BRHMHF-5.6}), we have
\begin{equation}
	\label{equ:BRHMHF-5.8}
\limsup_{k\to+\infty}\int_\Omega\int_\varepsilon^T|\nabla p^\Omega_t(x,y)|f(y,T_1+T+\varepsilon-t)  |\nabla\phi_k|{\rm d}t\dm	\ls c_1c'_T\delta(x)\int_{\partial\Omega}F_x(y)\eta(y){\rm d}Per_\Omega(y).
\end{equation}	
Since $\eta(y)=|w(y)|$ on $\partial\Omega\cap B_{R/2}(x_0)$ and $\eta(y)\ls M+\max_{\Omega\cap V_R(x_0)}|w|$ on $\partial\Omega\setminus B_{R/2}(x_0)$, we get
\begin{equation*}
	\begin{split}
		\int_{\partial\Omega}F_x(y)\eta(y){\rm d}Per_\Omega(y)&\ls A\int_{\partial\Omega\cap B_{R/2}(x_0)}F_x(y)|w(y)|{\rm d}Per_\Omega(y)+B \int_{\partial\Omega\setminus B_{R/2}(x_0)}F_x(y) {\rm d}Per_\Omega(y)\\
		&\ls A\int_{\partial\Omega}F_x(y)|w(y)|{\rm d}Per_\Omega(y)+B \cdot c_R\cdot Per_\Omega(y)(X),
		\end{split}
\end{equation*}
where 
$$ A:=M/(\max_{\Omega\cap V_{R}(x_0)}|w|)+1,\qquad B:= M + \max_{\Omega\cap V_{R}(x_0)}|w|,$$
and we have used $F_x(y)\ls c_R$ on $\partial\Omega\setminus B_{R/2}(x_0)$, since $d(x,y)\gs R/4$ (by $x\in B_{R/4}(x_0)$). 
 The combination of this and (\ref{equ:Duhamel-2})-(\ref{equ:BRHMHF-5.8}) implies 
$$    \int_\Omega p_\varepsilon^\Omega(x,y)f(y,T_1+T)\dm(y)-\int_\Omega p_T^\Omega(x,y)f(y, T_1+\varepsilon)\dm(y) 	\ls   A_1 \delta(x)\int_{\partial\Omega}F_x(y)|w(y)|{\rm d}Per_\Omega(y)+B_1\cdot \delta(x),$$
where $A_1:=c_1c'_TA$ and $B_1:=c_1c_T' B c_RPer_\Omega(X).$ 
Letting $\varepsilon\to0$ and using the upper bound of heat kernel (\ref{equ:upper-heat-1}), we obtain
$$f(x,T_1+T)\ls c'_{R,T}\delta(x)\left(\int_\Omega f(y,T_1)\dm(y)+ \int_{\partial\Omega}F_x(y)|w(y)|{\rm d}Per_\Omega(y)+1\right),$$	
which is the desired estimate.
\end{proof}

As a consequence, we have the following Zygmund-type estimate for sub-solutions of heat equations near the boundary.

\begin{corollary}
	\label{cor:coro5.7-BRHMHF} Let $\Omega$ be a bounded open set of finite perimeter. Suppose that $\Omega$ satisfies a uniform exterior ball condition with $R_{\rm ext}>0$, and that $\partial\Omega$ is perimetrically regular with constant $C_0$. Let $f$ and $w$ be given in Theorem \ref{BRHMHF-thm-5.3}. Suppose that $w\in Lip(\overline\Omega)$ with a Lipschitz constant $L_w>0$. Let $x_0\in \partial\Omega$ and suppose $w(x_0)=0$.
    Then for any $T\gs R^2_{\rm ext}$  we have    
    $$\frac{f(x,T_1+T)}{\delta(x)}\ls c_T \ln\left(\frac{e\cdot {\rm diam}(\Omega)}{\delta(x)}\right)$$
for all $x\in\Omega\cap B_{R/4}(x_0)$ with $d(x,x_0)\ls 2\delta(x)$ and all $T_1>0$, where $R:=\min\{{\rm diam}(\Omega)/100,  R^2_{\rm ext}/(4L_w)\}$ and the constant $c_T$ depends only on $N,K,\Omega,M, L_w$ and $T$.
   \end{corollary}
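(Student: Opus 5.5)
The plan is to apply Theorem \ref{BRHMHF-thm-5.4} with $R:=\min\{{\rm diam}(\Omega)/100, R^2_{\rm ext}/(4L_w)\}$ and then bound each term on its right-hand side. Two choices make the theorem applicable. First, since $w(x_0)=0$ and $w$ is $L_w$-Lipschitz, we have $\max_{\Omega\cap B_{2R}(x_0)}|w|\ls 2RL_w\ls R^2_{\rm ext}/2$. Hence every $T\gs R^2_{\rm ext}$ satisfies $T\gs\max\{R^2_{\rm ext},2\max_{\Omega\cap B_{2R}(x_0)}|w|\}$. Second, $\int_\Omega f(y,T_1)\dm\ls M\meas(\Omega)$. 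With these, Theorem \ref{BRHMHF-thm-5.4} gives
$$\frac{f(x,T+T_1)}{\delta(x)}\ls c_{R,T}\Big(M\meas(\Omega)+1+\int_{\partial\Omega}F_x(y)|w(y)|\,{\rm d}Per_\Omega(y)\Big),$$
where $F_x(y):=\int_{d^2(x,y)}^{D^2}\frac{{\rm d}t}{t\,\meas(B_{\sqrt t}(x))}+1$. It therefore remains to show that the boundary integral is at most $C\ln(eD/\delta(x))$.

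To bound the boundary integral, use $|w(y)|\ls L_w d(y,x_0)\ls L_w(d(x,y)+d(x,x_0))\ls L_w(d(x,y)+2\delta(x))$. Every $y\in\partial\Omega$ has $d(x,y)\gs\delta(x)=:\delta$, so $|w(y)|\ls 3L_w d(x,y)$. Decompose $\partial\Omega$ into dyadic annuli $A_j:=\{y\in\partial\Omega:\ 2^{j}\delta\ls d(x,y)<2^{j+1}\delta\}$ for $0\ls j\ls J$, where $2^J\delta\sim D$. On $A_j$ we have $|w|\ls 6L_w2^j\delta$. Since $B_{2^{j+1}\delta}(x)\subset B_{2^{j+2}\delta}(x_0)$ (using $d(x,x_0)\ls 2\delta$), perimetric regularity together with doubling gives
$$Per_\Omega(A_j)\ls C_0\,\frac{\meas(B_{2^{j+2}\delta}(x_0))}{2^{j+2}\delta}\ls C\,\frac{\meas(B_{2^{j}\delta}(x))}{2^{j}\delta}.$$
For $y\in A_j$, split the $t$-integral in $F_x(y)$ dyadically and use the reverse doubling-type bound $\meas(B_s(x))\gs c(s/r)^{\kappa}\meas(B_r(x))$ for $s\gs r$, with some $\kappa>0$ valid on bounded scales in $RCD(K,N)$ spaces. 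If reverse doubling is not available, one can instead use the crude bound $\meas(B_s(x))\gs\meas(B_r(x))$ for $s\gs r$; this gives
$$F_x(y)\ls 1+\frac{C}{\meas(B_{2^j\delta}(x))}\ln\frac{D}{2^j\delta}.$$

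Combining these bounds, the contribution of $A_j$ is at most
$$6L_w2^j\delta\cdot C\,\frac{\meas(B_{2^j\delta}(x))}{2^j\delta}\Big(1+\frac{C\ln(D/2^j\delta)}{\meas(B_{2^j\delta}(x))}\Big)\ls C\big(\meas(\Omega)+\ln(D/2^j\delta)\big)+C.$$
Summing over $j\ls J\sim\log_2(D/\delta)$ yields a total of order $\sum_{j}(1+\ln(D/2^j\delta))\sim\ln^2(D/\delta)$. This is one logarithm too many, so the crude estimate does not suffice. The main obstacle is therefore to recover a single logarithm. The fix I would try first is to bound $F_x(y)$ by its dominant scale: by reverse doubling, $\int_{r^2}^{D^2}\frac{{\rm d}t}{t\,\meas(B_{\sqrt t}(x))}\ls\frac{C}{\meas(B_r(x))}$, with no logarithm when $\kappa>0$. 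Then each $A_j$ contributes $O(1)$, and summing over $J\sim\ln(D/\delta)$ annuli gives exactly $C\ln(eD/\delta)$. The reverse-doubling estimate holds because $X$ is connected and $B_D(x)\supset\Omega$ up to scale $D$, where $\Omega$ is bounded.

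The region $\partial\Omega\setminus B_{D}(x)$ is empty, so no separate treatment is needed there. Collecting the terms gives $f(x,T_1+T)/\delta(x)\ls c_T(1+\ln(eD/\delta))\ls 2c_T\ln(eD/\delta(x))$, which is the claimed estimate.
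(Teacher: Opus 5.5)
Your argument is correct and follows the paper's route. You apply Theorem \ref{BRHMHF-thm-5.4} with the same $R$, use $|w|\ls L_w d(\cdot,x_0)$ to get $2\max_{\Omega\cap B_{2R}(x_0)}|w|\ls R^2_{\rm ext}\ls T$, bound $\int_\Omega f(\cdot,T_1)\dm\ls M\meas(\Omega)$, and show that the boundary term is $O(\ln(eD/\delta(x)))$. The only difference is that the paper quotes Lemma 5.4 of [ZZ25] for that logarithmic bound, whereas you prove it directly via dyadic annuli, perimetric regularity at $x_0$ and reverse doubling. To be precise, reverse doubling holds for scales up to about $D/4$, because $X$ is geodesic and $\Omega$ contains points at distance $\gs D/2$ from $x$; the scales in $[D/4,D]$, and the radii $\gs D$ where perimetric regularity is not assumed, contribute only $O(1)$.
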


\begin{proof}
	The fact $w(x_0)=0$ and $w\in Lip(\partial\Omega)$ with Lipschitz constant $L_w$ implies
	$$|w(x)|\ls L_wd(x,x_0),\qquad \forall x\in\Omega,$$
Since $$\max_{B_{2R}(x_0)\cap \Omega}|w|\ls 2RL_w\ls R^2_{\rm ext}/2.$$
	by combining this with \cite[Lemma 5.4]{ZZ25} and Theorem \ref{BRHMHF-thm-5.3}, we get 
 $$\frac{f(x,T_1+T)}{\delta(x)}\ls c_{R,T} \int_\Omega f(y,T_1)\dm(y)+c_{R,T} \left[C\ln\left(\frac{{\rm diam}(\Omega)}{\delta(x)}\right)+\int_{\partial\Omega} |w(y)|{\rm d}Per_\Omega(y)+1\right]$$
for all $x\in\Omega\cap B_{R/4}(x_0)$ with $d(x,x_0)\ls 2\delta(x)$ and $T_1>0$. Notice $f(y,T_1)\ls M$.
Therefore, we conclude that 
$$\frac{f(x,T_1+T)}{\delta(x)}\ls c_{R,T}\left[ \meas(\Omega)\cdot M+ C\ln\left(\frac{{\rm diam}(\Omega)}{\delta(x)}\right)+Per_\Omega(X)\cdot L_w\cdot {\rm diam}(\Omega)+1 \right]$$
for all $x\in\Omega\cap B_{R/4}(x_0)$ with $d(x,x_0)\ls 2\delta(x)$ and $T_1>0$.	
\end{proof}

 \section{Proof of Theorem \ref{thm:main-BRHMHF}.}\label{sect-6}
 
 This section will deal with the harmonic map heat flow into a $CAT(0)$ space $(Y,d_Y)$.  Let $w\in W^{1,2}(\Omega,Y)\cap C(\overline\Omega)$ and let $u(x,t)$ be a harmonic map heat flow with initial data $u_0\in W^{1,2}(\Omega,Y)\cap L^\infty(\Omega,Y)$ and boundary data $w$.   

We need the following facts in \cite{HZZ26} 
\begin{lemma}\label{lem:BRHMHF-lem6.1}
Suppose that $d_Y(P_0,u_0(x))\ls M$ for some $P_0\in Y$ and $M>0$. Then the following hold:
\begin{itemize}
\item[(1)] $d_Y(P_0,u(x,t))\ls M$ for any $t>0$,
\item[(2)] $d_Y(Q,u(x,t))$ is a sub-solution of the heat equation on $\Omega\times(0,+\infty)$, for any $Q\in Y$, and 
\item[(3)] $u$ is loally continuous on $\Omega\times(0,+\infty)$.
\end{itemize}
\end{lemma}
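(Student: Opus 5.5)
The plan is to work throughout with the Mayer--Jost construction of the flow as a limit of implicit Euler (resolvent) iterates. For $h>0$ and $v\in W^{1,2}_\psi(\Omega,Y)$, let $J_h(v)$ be the unique minimizer of
$$w\mapsto E[w]+\frac{1}{2h}\int_\Omega d_Y^2(w,v)\,\dm$$
over maps with the boundary condition $d_Y(w,\psi)\in W^{1,2}_0(\Omega)$. Uniqueness comes from the strict convexity of $d_Y^2$ in a $CAT(0)$ space. The flow is then $u(\cdot,t)=\lim_{n\to\infty}J_{t/n}^n u_0$ in $L^2(\Omega,Y)$, locally uniformly in $t$, and each of (1)--(3) will be proved first for the discrete iterates and then passed to the limit.

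\textbf{Step (1).} Let $\pi$ be the nearest-point projection of $Y$ onto the closed convex ball $\overline{B_M(P_0)}$; in a $CAT(0)$ space it is $1$-Lipschitz.
\begin{itemize}
\item Since $\pi$ is $1$-Lipschitz, $e_{2,r}[\pi\circ w]\ls e_{2,r}[w]$ pointwise, hence $E[\pi\circ w]\ls E[w]$.
\item If $v$ takes values in the ball, then $d_Y(\pi w,v)=d_Y(\pi w,\pi v)\ls d_Y(w,v)$.
\item I need the boundary data to also lie in the ball, so that $\pi\circ w$ stays admissible. Concretely, I will enlarge $M$ so that $\psi(\overline\Omega)\subset B_M(P_0)$; then $d_Y(\pi w,\psi)\ls d_Y(w,\psi)$ keeps $\pi\circ w$ in the admissible class.
\end{itemize}
By uniqueness of the minimizer, $J_h(v)=\pi\circ J_h(v)$, so every iterate takes values in $\overline{B_M(P_0)}$. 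This closed condition survives the $L^2$ limit, which gives (1).

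\textbf{Step (2), the core.} For a single step $u_h=J_h(v)$ and a fixed $Q\in Y$, I want to show
$${\bf\Delta}\,d_Y(Q,u_h)\gs \frac{d_Y(Q,u_h)-d_Y(Q,v)}{h}$$
in the sense of distributions. To get it:
\begin{itemize}
\item Take $0\ls\phi\in Lip_0(\Omega)$ and form the competitor $u_s(x)$, the point on the geodesic from $u_h(x)$ to $Q$ at parameter $s\phi(x)$. It is admissible because $\phi$ vanishes near $\partial\Omega$.
\item Use the Korevaar--Schoen $CAT(0)$ quadrilateral comparison in the form adapted to $RCD$ spaces in \cite{GT21b}:
$$e_{u_s}\ls e_{u_h}-2s\ip{\nabla \phi}{\nabla d_Q(u_h)}\,d_Q(u_h)+\dots+o(s),$$
or equivalently its version for $d_Q^2$.
\item Use the triangle inequality to bound the fidelity term: $d_Y(u_s,v)^2\ls d_Y(u_h,v)^2-2s\phi\, d_Q(u_h)\big(d_Q(u_h)-d_Q(v)\big)+o(s)$.
\item Minimality of $u_h$ yields the inequality above first for $d_Q^2$. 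To pass to $d_Q$ itself I would use a Kato-type argument, replacing $\phi$ by $\phi/(d_Q(u_h)+\delta)$ and letting $\delta\to0$.
\end{itemize}
Summing over $k$ gives that the piecewise-constant interpolations $d_Q(J_{t/n}^k u_0)$ are discrete subsolutions. The passage to the limit needs three ingredients: the uniform energy bound $E[J^k_{t/n}u_0]\ls E[u_0]$, the $L^2$ convergence of the iterates, and the weak lower semicontinuity of the gradient terms. Together these give that $d_Q(u)$ is a weak subsolution in the sense of Definition~\ref{BRHMHF-def-3.1}.

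\textbf{Main obstacle and Step (3).} I expect the first-variation computation in Step (2) to be the hardest part. It needs a precise expansion of $e_{u_s}$ in a nonsmooth source, which is where the $RCD$ calculus and the pointwise energy-density representation of \cite{GT21b} are essential. Controlling the $o(s)$ terms uniformly enough to pass to the limit is the delicate point. For (3), I would invoke the interior Lipschitz regularity of \cite{ZZ26a,HZZ26} directly. As a fallback, I would argue from (2) alone: each $d_Q(u)$, for $Q$ in a countable dense subset of the separable range, is a bounded nonnegative subsolution, so parabolic mean-value inequalities give a uniform interior modulus of continuity for all of them, and hence for $u$.
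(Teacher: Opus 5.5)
The paper gives no argument for this lemma: all three items are quoted from \cite{HZZ26}. For (3) you do the same. For (1) and (2) you rebuild the proof from the Mayer--Jost resolvent $J_h$. That is a legitimate and more self-contained route (essentially the Korevaar--Schoen proof that $d_Q(u)$ is subharmonic, run one Euler step at a time), at the cost of needing the first-variation formula for the energy on $RCD$ sources.

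\textbf{Step (1) has a genuine gap.} Enlarging $M$ until $\psi(\overline\Omega)\subset B_M(P_0)$ only gives $d_Y(P_0,u)\ls\max\{M,\sup d_Y(P_0,\psi)\}$, which is not the stated bound. For a general $\psi\in W^{1,2}(\Omega,Y)$ that supremum need not even be finite. This would be harmless in Section~\ref{sect-6}, where the data are continuous on $\overline\Omega$, but it is not the lemma. No enlargement is needed:
\begin{itemize}
\item[(a)] $d_Y(u_0,\psi)\in W^{1,2}_0(\Omega)$, and a nonnegative $W^{1,2}(\Omega)$ function dominated by a $W^{1,2}_0(\Omega)$ function lies in $W^{1,2}_0(\Omega)$. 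Hence the constraint $d_Y(w,\psi)\in W^{1,2}_0(\Omega)$ is equivalent to $d_Y(w,u_0)\in W^{1,2}_0(\Omega)$.
\item[(b)] Since $u_0=\pi\circ u_0$, you get $d_Y(\pi\circ w,u_0)\ls d_Y(w,u_0)$.
\end{itemize}
So $\pi\circ w$ is admissible with the original $M$.

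\textbf{Step (2) needs two corrections.} First, the fidelity estimate
$d_Y^2(u_s,v)\ls d_Y^2(u_h,v)-2s\phi\,d_Q(u_h)\big(d_Q(u_h)-d_Q(v)\big)+O(s^2)$
is correct, but it does not follow from the triangle inequality. That only gives a first-order \emph{increase} $2s\phi\,d_Q(u_h)\,d_Y(u_h,v)$. You need the $CAT(0)$ comparison
$d_Y^2(u_s,v)\ls(1-s\phi)d_Y^2(u_h,v)+s\phi\,d_Q^2(v)-s\phi(1-s\phi)d_Q^2(u_h)$,
followed by $\big(d_Q(u_h)-d_Q(v)\big)^2\ls d_Y^2(u_h,v)$.

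Second, make explicit the term hidden in your ``$\dots$'': the $-2s\phi$ multiple of the energy density coming from the factor $(1-s\phi)^2$. Use the normalization $E=\frac12\int|\nabla u|^2$, which is implicit in your target inequality. Then minimality gives
$\frac12{\bf\Delta}d_Q^2(u_h)\gs|\nabla u_h|^2+\frac1h d_Q(u_h)\big(d_Q(u_h)-d_Q(v)\big)$.
Your Kato step closes only because $|\nabla u_h|^2\gs|\nabla d_Q(u_h)|^2$ absorbs the last term of $\frac12{\bf\Delta}d_Q^2=d_Q{\bf\Delta}d_Q+|\nabla d_Q|^2$. In the limit the gradient term is linear, so what you actually use is weak $L^2$ convergence of $\nabla d_Q(u_n)$, which the energy bound supplies, not lower semicontinuity.

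\textbf{The fallback for (3) does not work.} Mean-value inequalities for nonnegative subsolutions bound suprema by averages but give no decay of oscillation; subsolutions are in general only upper semicontinuous. Making $\fint d_Y(u,Q)$ small on small cylinders for a suitable $Q$ is an energy-decay statement, i.e.\ exactly the regularity you are trying to prove. So (3) genuinely rests on the cited interior regularity, as in the paper.
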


We are now in a position to prove Theorem \ref{thm:main-BRHMHF}.
\begin{proof}[Proof of Theorem \ref{thm:main-BRHMHF}]
Take any $x_0\in \partial\Omega$ and let $w_{x_0}(x):=d_Y(w(x),w(x_0)).$ It is clear that if $w\in C(\overline\Omega,Y)$ (or $C^\gamma(\overline\Omega,Y)$, $Lip(\overline\Omega,Y)$) then $w_{x_0}\in C(\overline\Omega)$ (resp. $C^\gamma(\overline\Omega)$, $Lip(\overline\Omega)$). Moreover, the modulus of continuity of $w_{x_0}$  (or the H\"older norm, the Lipschitz constant $L_{w_{x_0}}$) is not great than the modulus of continuity of $w$ (resp. the H\"older norm, the Lipschitz constant $L_{w}$).

 The triangle inequality implies
$$d_Y(u(x,t),w(x_0))-w_{x_0}(x)\ls d_Y(u(x,t),w(x))$$
for any $(x,t)\in\Omega\times(0,+\infty)$. Hence $\big(d_Y(u(x,t),w(x_0))\big)^+\in W^{1,2}_0(\Omega)$ for almost all $t\in(0,+\infty)$.

The items (1), (2) and (3) follow from the combination of Lemma \ref{lem:BRHMHF-lem6.1}, Theorem \ref{thm:comparison-heat}, and the regularity of the torsion function $h$ in Lemma \ref{lem:barrier-elliptic}, Lemma \ref{lem:barrier-elliptic-2} and Lemma \ref{lem:optimal-regularity-for-torsion}, respectively.

Let  $R:=\min\{{\rm diam}(\Omega)/100,  R^2_{\rm ext}/(4L_w)\}$. For any $x\in \Omega\cap B_{R/4}(x_0)$, since the function 
$$(x,t)\mapsto d_Y(u(x,t),w(x_0))$$
is a sub-solution of the heat equation with the boundary data $w_{x_0}(x)$. The triangle inequality yields $\partial_t d_Y(u(x,t),w(x_0))\in L^\infty_{\rm loc}(\Omega\times(0,+\infty)$ (because $u$ is locally Lipschitz continuous in time). Corollary \ref{cor:coro5.7-BRHMHF} implies
$$\frac{d_Y(u(x,T+T_1),w(x_0))}{\delta(x)}\ls  c_T \ln\left(\frac{e\cdot {\rm diam}(\Omega)}{\delta(x)}\right)$$
for any  $x\in\Omega\cap B_{R/4}(x_0)$ with $d(x,x_0)\ls 2\delta(x)$ and any $T_1>0$.
\end{proof}

\end{document}